\definecolor{dark-red}{rgb}{0.4,0.15,0.15}
\definecolor{dark-blue}{rgb}{0.15,0.15,0.4}
\definecolor{medium-blue}{rgb}{0,0,0.5}
\newcommand*{\defeq}{\mathrel{\rlap{%
                     \raisebox{0.3ex}{$\m@th\cdot$}}%
                     \raisebox{-0.3ex}{$\m@th\cdot$}}%
                     =}
\newcommand*{\eqdef}{\mathrel{=\llap{%
                     \raisebox{0.3ex}{$\m@th\cdot$}}%
                     \llap{\raisebox{-0.3ex}{$\m@th\cdot$}}}%
                     }
\renewcommand\aa{\mathfrak{a}}
\newcommand\A{\mathbb{A}}
\renewcommand\AA{\mathcal{A}}
\newcommand\BB{\mathcal{B}}
\renewcommand\C{\mathbb{C}}
\newcommand\dee{\partial}
\newcommand\EE{\mathcal{E}}
\newcommand\GL{\mathrm{GL}}
\newcommand\Hb{\mathbb{H}}
\newcommand\OO{\mathcal{O}}
\newcommand\PP{\mathcal{P}}
\newcommand\Q{\mathbb{Q}}
\newcommand\qq{\mathfrak{q}}
\newcommand\R{\mathbb{R}}
\newcommand\SL{\mathrm{SL}}
\newcommand\St{\mathrm{St}}
\newcommand\Z{\mathbb{Z}}
\def\e{\varepsilon}
\DeclareMathOperator{\ad}{ad}
\DeclareMathOperator{\arsinh}{arsinh}
\DeclareMathOperator*{\Res}{Res}
\DeclareMathOperator{\sgn}{sgn}
\DeclareMathOperator{\vol}{vol}
\numberwithin{equation}{section}
\newtheorem{theorem}[equation]{Theorem}
\newtheorem{corollary}[equation]{Corollary}
\newtheorem{lemma}[equation]{Lemma}
\newtheorem{proposition}[equation]{Proposition}
\theoremstyle{remark}
\newtheorem{remark}[equation]{Remark}
\begin{document}

\title[Density Theorems for Exceptional Eigenvalues]{Density Theorems for Exceptional Eigenvalues for Congruence Subgroups}

\author{Peter Humphries}


\address{Department of Mathematics, University College London, Gower Street, London WC1E 6BT, United Kingdom}

\email{\href{mailto:pclhumphries@gmail.com}{pclhumphries@gmail.com}}

\keywords{Selberg eigenvalue conjecture, Ramanujan conjecture}

\subjclass[2010]{11F72 (primary); 11F30 (secondary)}

\begin{abstract}
Using the Kuznetsov formula, we prove several density theorems for exceptional Hecke and Laplacian eigenvalues of Maa\ss{} cusp forms of weight $0$ or $1$ for the congruence subgroups $\Gamma_0(q)$, $\Gamma_1(q)$, and $\Gamma(q)$. These improve and extend upon results of Sarnak and Huxley, who prove similar but slightly weaker results via the Selberg trace formula.
\end{abstract}

\maketitle

\section{Introduction}

Let $\kappa \in \{0,1\}$, let $\Gamma$ be a congruence subgroup of $\SL_2(\Z)$, and let $\chi$ be a congruence character of $\Gamma$ satisfying $\chi(-I) = (-1)^{\kappa}$ should $-I$ be a member of $\Gamma$. Denote by $\AA_{\kappa}(\Gamma,\chi)$ the space spanned by Maa\ss{} cusp forms of weight $\kappa$, level $\Gamma$, and nebentypus $\chi$, namely the $L^2$-closure of the space of smooth functions $f : \Hb \to \C$ satisfying
\begin{itemize}
\item $f(\gamma z) = \chi(\gamma) j_{\gamma}(z)^{\kappa} f(z)$ for all $\gamma \in \Gamma$ and $z \in \Hb$, where for $\gamma = \left(\begin{smallmatrix} a & b \\ c & d \end{smallmatrix}\right) \in \Gamma$,
\[j_{\gamma}(z) \defeq \frac{cz + d}{|cz + d|},\]
\item $f$ is an eigenfunction of the weight $\kappa$ Laplacian
\[\Delta_{\kappa} \defeq -y^2 \left(\frac{\dee^2}{\dee x^2} + \frac{\dee^2}{\dee y^2}\right) + i\kappa y \frac{\dee}{\dee x},\]
\item $f$ is of moderate growth, and
\item the constant term is zero in the Fourier expansion of $f$ at every cusp $\aa$ of $\Gamma \backslash \Hb$ that is singular with respect to $\chi$.
\end{itemize}

We may choose a basis $\BB_{\kappa}(\Gamma,\chi)$ of the complex vector space $\AA_{\kappa}(\Gamma,\chi)$ consisting of Hecke eigenforms. For $f \in \BB_{\kappa}(\Gamma,\chi)$, we let $\lambda_f = 1/4 + t_f^2$ denote the eigenvalue of the weight $\kappa$ Laplacian, where either $t_f \in [0,\infty)$ or $it_f \in (0,1/2)$. Similarly, we let $\lambda_f(p)$ denote the eigenvalue of the Hecke operator $T_p$ at a prime $p$, so that $|\lambda_f(p)| < p^{1/2} + p^{-1/2}$. The generalised Ramanujan conjecture states that $t_f$ is real and that $|\lambda_f(p)| \leq 2$ for every prime $p$. Exceptions to this conjecture are called exceptional eigenvalues. It is known that exceptional Laplacian eigenvalues cannot occur if $\kappa = 1$, while for $\kappa = 0$ there are no exceptional Laplacian eigenvalues for Maa\ss{} cusp forms of squarefree conductor less than $857$ \cite[Theorem 1]{BS}. The best current bounds towards the generalised Ramanujan conjecture are due to Kim and Sarnak \cite[Proposition 2 of Appendix 2]{Kim}; they show that
\[\lambda_f \geq \frac{1}{4} - \left(\frac{7}{64}\right)^2, \qquad \left|\lambda_f(p)\right| \leq p^{7/64} + p^{-7/64}.\]

\subsection{Results}

In this paper, we use the Kuznetsov formula to prove density results for exceptional eigenvalues for the congruence subgroups
\begin{align*}
\Gamma_0(q) & \defeq \left\{\begin{pmatrix} a & b \\ c & d \end{pmatrix} \in \SL_2(\Z) : c \equiv 0 \hspace{-.2cm} \pmod{q}\right\},	\\
\Gamma_1(q) & \defeq \left\{\begin{pmatrix} a & b \\ c & d \end{pmatrix} \in \SL_2(\Z) : a,d \equiv 1 \hspace{-.2cm} \pmod{q}, \ c \equiv 0 \hspace{-.2cm} \pmod{q}\right\},	\\
\Gamma(q) & \defeq \left\{\begin{pmatrix} a & b \\ c & d \end{pmatrix} \in \SL_2(\Z) : a,d \equiv 1 \hspace{-.2cm} \pmod{q}, \ b,c \equiv 0 \hspace{-.2cm} \pmod{q}\right\},
\end{align*}
with $\chi$ equal to the trivial character for the latter two congruence subgroups. Recall that
\[\vol(\Gamma \backslash \Hb) = \frac{\pi}{3} \left[\SL_2(\Z) : \Gamma\right] = \begin{dcases*}
\frac{\pi}{3} q \prod_{p \mid q} \left(1 + \frac{1}{p}\right) & if $\Gamma = \Gamma_0(q)$,	\\
\frac{\pi}{3} q^2 \prod_{p \mid q} \left(1 - \frac{1}{p^2}\right) & if $\Gamma = \Gamma_1(q)$,	\\
\frac{\pi}{3} q^3 \prod_{p \mid q} \left(1 - \frac{1}{p^2}\right) & if $\Gamma = \Gamma(q)$.
\end{dcases*}\]
When $\chi$ is the trivial character, we write $\BB_{\kappa}(\Gamma)$ in place of $\BB_{\kappa}(\Gamma,\chi)$, while when $\Gamma = \Gamma_0(q)$, we write this as $\BB_{\kappa}(q,\chi)$. Given positive integers $q$ and $q_{\chi}$ with $q_{\chi} \mid q$, we factorise $q = \prod_{p^{\alpha} \parallel q} p^{\alpha}$ and $q_{\chi} = \prod_{p^{\gamma} \parallel q_{\chi}} p^{\gamma}$, and define
\[\dot{Q} = \dot{Q}(q,q_{\chi}) = \prod_{\substack{p^{\alpha} \parallel q \\ p^{\gamma} \parallel q_{\chi}}} \dot{Q}(p^{\alpha},p^{\gamma}), \qquad \ddot{Q} = \ddot{Q}(q,q_{\chi}) = \prod_{\substack{p^{\alpha} \parallel q \\ p^{\gamma} \parallel q_{\chi}}} \ddot{Q}(p^{\alpha},p^{\gamma})\]
with
\begin{align*}
\dot{Q}(p^{\alpha},p^{\gamma}) & \defeq \begin{dcases*}
p^{\lfloor \frac{3\alpha + 1}{4} \rfloor - \frac{\alpha}{2}} & if $p$ is odd and $\alpha = \gamma \geq 3$,	\\
2^{\lfloor \frac{3\alpha + 1}{4} \rfloor - \frac{\alpha}{2}} & if $p = 2$ and $\gamma + 1 \geq \alpha \geq 3$,	\\
1 & otherwise,
\end{dcases*}	\\
\ddot{Q}(p^{\alpha},p^{\gamma}) & \defeq \begin{dcases*}
p & if $p$ is odd and $\alpha = \gamma \geq 3$,	\\
4 & if $p = 2$ and $\alpha = \gamma \geq 3$,	\\
2 & if $p = 2$ and $\alpha = \gamma + 1 \geq 3$,	\\
1 & otherwise.
\end{dcases*}
\end{align*}

\begin{theorem}\label{Sarnakthm}
For any fixed finite collection of primes $\PP$ not dividing $q$, any $\alpha_p \in (2, p^{1/2} + p^{-1/2})$ and $0 \leq \mu_p \leq 1$ for all $p \in \PP$ with $\sum_{p \in \PP} \mu_p = 1$, we have that
\begin{multline}\label{SarnakGamma1}
\#\left\{f \in \BB_{\kappa}(\Gamma_1(q)) : t_f \in [0,T], \ \left|\lambda_f(p)\right| \geq \alpha_p \text{ for all $p \in \PP$}\right\}	\\
\ll_{\e} \vol(\Gamma_1(q) \backslash \Hb)^{1 - 3 \sum_{p \in \PP} \mu_p \frac{\log \alpha_p/2}{\log p} + \e} \left(T^2\right)^{1 - 4 \sum_{p \in \PP} \mu_p \frac{\log \alpha_p/2}{\log p} + \e},
\end{multline}
\begin{multline}\label{SarnakGamma}
\#\left\{f \in \BB_{\kappa}(\Gamma(q)) : t_f \in [0,T], \ \left|\lambda_f(p)\right| \geq \alpha_p \text{ for all $p \in \PP$}\right\}	\\
\ll_{\e} \vol(\Gamma(q) \backslash \Hb)^{1 - \frac{8}{3} \sum_{p \in \PP} \mu_p \frac{\log \alpha_p/2}{\log p} + \e} \left(T^2\right)^{1 - 4 \sum_{p \in \PP} \mu_p \frac{\log \alpha_p/2}{\log p} + \e},
\end{multline}
\begin{multline}\label{SarnakGamma0}
\#\left\{f \in \BB_{\kappa}(q,\chi) : t_f \in [0,T], \ \left|\lambda_f(p)\right| \geq \alpha_p \text{ for all $p \in \PP$}\right\}	\\
\ll_{\e} \vol(\Gamma_0(q) \backslash \Hb)^{1 - 4 \sum_{p \in \PP} \mu_p \frac{\log \alpha_p/2}{\log p} + \e} \left(T^2\right)^{1 - 4 \sum_{p \in \PP} \mu_p \frac{\log \alpha_p/2}{\log p} + \e}	\\
\times \min\left\{\dot{Q}^{4 \sum_{p \in \PP} \mu_p \frac{\log \alpha_p/2}{\log p}}, \ddot{Q}^{1 - 4 \sum_{p \in \PP} \mu_p \frac{\log \alpha_p/2}{\log p}}\right\}.
\end{multline}
\end{theorem}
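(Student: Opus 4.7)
The plan is to use the Kuznetsov formula combined with a Hecke-eigenvalue amplifier, in the style of Iwaniec.  First I would fix a nonnegative test function $h$ majorising $\mathbf{1}_{[-T,T]}$ on the spectral line (for instance a Gaussian of scale $T$, with good behaviour in the exceptional strip $|\Im t| \leq 1/2$), and set up the appropriate spectral--Kloosterman identity for each subgroup: the classical Kuznetsov formula for $\Gamma_0(q)$ with nebentypus $\chi$ (moduli $c \equiv 0 \pmod q$), a sum over Dirichlet characters mod $q$ for $\Gamma_1(q)$, and a more restrictive modulus condition for $\Gamma(q)$.  The three volume factors $q$, $q^2$, $q^3$ enter through these Kloosterman moduli and the unipotent normalisation at the relevant cusps.

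Next I would build the amplifier.  If $|\lambda_f(p)| \geq \alpha_p > 2$, then the Satake parameter $\alpha_p(f)$ (with $\alpha_p(f) \beta_p(f) = \chi(p)$, $|\chi(p)| = 1$) satisfies $|\alpha_p(f)| \geq \alpha_p/2$, whence $|\lambda_f(p^{k_p})| \gg (\alpha_p/2)^{k_p}$.  Taking the squared amplifier $a_f \defeq \prod_{p \in \PP} \lambda_f(p^{k_p})^2$ and expanding it as a short linear combination of $\lambda_f(n)$ with $n \leq N \defeq \prod_p p^{2k_p}$ via the Hecke relations, I would apply Kuznetsov with $m = n$ (so the spectral side is pointwise nonnegative) and bound each term to obtain
\[\#\{\text{exceptional } f\} \cdot \prod_{p \in \PP} \left(\frac{\alpha_p}{2}\right)^{2k_p} \ll_{\e} N^{\e}\left(\vol(\Gamma \backslash \Hb) T^2 + E(q, T, N)\right),\]
where $E(q,T,N)$ majorises the Kloosterman contribution using the Weil bound in $c$ and a geometric sum in $c$; the rate at which $E$ grows in $N$ differs across the three subgroups, producing the different volume exponents $3,\ 8/3,\ 4$ after balancing, with the uniform $4$ on $T^2$ arising from the archimedean part of the Kuznetsov kernel.

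Finally I would optimise by choosing $k_p$ with $k_p \log p \propto \mu_p$ and the total scale $\sum k_p \log p$ equating the diagonal and Kloosterman sides; this converts the lower bound on the amplifier into the stated saving.  The main technical obstacle is the refined analysis for $\Gamma_0(q,\chi)$ at prime powers $p^\alpha$ with $p^\gamma \parallel q_\chi$ and $\alpha \geq \gamma \geq 3$: here newform theory is delicate and the usual Petersson formula is not diagonal, so one needs a local spectral decomposition (in the Petrow--Young vein, or a direct adelic computation via Whittaker new vectors) to extract the arithmetic saving encoded in the $\min\{\dot Q^{\cdots}, \ddot Q^{\cdots}\}$ factor of \eqref{SarnakGamma0}, where the $\dot Q$ factor comes from winning in the Kloosterman regime and $\ddot Q$ from winning in the diagonal regime.
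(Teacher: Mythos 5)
Your overall skeleton---Kuznetsov with a nonnegative test function concentrated on $[0,T]$, a Hecke amplifier, positivity to discard the continuous spectrum, and a final optimisation of the amplifier length---matches the paper, which runs Rankin's trick with $|\lambda_f(p)|^{2\ell_p}$ expanded in the $\lambda_f(p^{2j})$ with positive Chebyshev coefficients, takes $m=1$, $n=\prod_{p}p^{2j_p}$, and integrates the pre-Kuznetsov formula over $r\in[0,T]$ so that both weights $\kappa=0,1$ are covered; your $m=n$ variant would give the same numerology. The genuine gap is in your treatment of the Kloosterman side. For $\Gamma_1(q)$ you propose to bound the Kloosterman contribution ``using the Weil bound in $c$ and a geometric sum in $c$''. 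Two problems. First, the Weil-type bound $|S_\chi(m,n;c)|\le\tau(c)\sqrt{(m,n,c)c}$ is false for twisted Kloosterman sums in general: when the $p$-part of $c$ has exponent equal (or, for $p=2$, nearly equal) to the conductor exponent $\gamma\ge 3$ of $\chi$, one only has roughly $p^{\lfloor (3\beta+1)/4\rfloor}$ (Lemma \ref{weakWeillemma}), and for $\Gamma_1(q)$ the sum runs over all $\chi\pmod q$, so such bad pairs $(c,\chi)$ do occur once $q$ is divisible by a cube. Second, and more importantly, even granting Weil for every character, summing $\varphi(q)$ characters each with a $\sqrt{c}$-bound produces an off-diagonal of size about $(mn)^{1/4}$ against a diagonal of size $\varphi(q)T^2$; balancing then yields only $\vol(\Gamma_1(q)\backslash\Hb)^{1-2\sum_{p}\mu_p\frac{\log\alpha_p/2}{\log p}+\e}$, i.e.\ the Selberg-trace-formula-strength exponent $2$, not the claimed $3$. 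The paper's key device, which your proposal is missing, is to apply character orthogonality first, collapsing $\sum_{\chi}\overline{\chi}(a)S_\chi(m,n;c)$ to $\varphi(q)$ times a single congruence-restricted sum $S_{a(q)}(m,n;c)$, and only then to bound: trivially on the part of $c$ supported on $q$ (which is $O(1)$ when the $q$-part of $c$ equals $q$) and by Weil on the coprime part. This saves an extra $\sqrt{q}$ over per-character Weil and is exactly what produces the exponent $3$ in \eqref{SarnakGamma1}; for $\Gamma(q)$, where $c\equiv 0\pmod{q^2}$ pushes the modulus past the bad range, the two routes give comparable strength, but for $\Gamma_1(q)$ the orthogonality step is indispensable.

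For \eqref{SarnakGamma0} your diagnosis is also off target. The difficulty is not that newform theory is delicate or that the relevant spectral formula fails to be diagonal, and no local spectral decomposition or Whittaker new-vector computation is needed: the paper works with an explicit orthonormal basis built from the Atkin--Lehner decomposition (the coefficients $\xi_f(\ell,d)$) and lower bounds for $\xi_f\nu_f$, exactly as for the other two groups. The sole source of $\dot Q$ is the arithmetic of the twisted Kloosterman sums $S_\chi(m,n;p^\beta)$ themselves, in the bad range $\beta=\gamma\ge 3$ (or $\gamma+1\ge\beta\ge 3$ for $p=2$) where only the weaker bound is available. And the $\ddot Q$ alternative in the minimum does not come from ``winning in the diagonal regime'': it comes from embedding $\AA_\kappa(q,\chi)\subset\AA_\kappa(q\ddot Q,\chi)$, at which level the bad range can no longer occur (so the corresponding $\dot Q$ equals $1$), and applying the $\dot Q$-free bound there at the cost of the factor $\ddot Q^{1-4\sum_{p}\mu_p\frac{\log\alpha_p/2}{\log p}}$. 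Without the orthogonality-plus-trivial-bound step and the correct handling of the twisted sums, your balancing cannot reach the exponents $3$, $8/3$, and $4$ with the stated $\min\{\dot Q^{\cdots},\ddot Q^{\cdots}\}$ factor.
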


\hyperref[Sarnakthm]{Theorem \ref*{Sarnakthm}} should be compared to the Weyl law, which states that
\[\#\left\{f \in \BB_{\kappa}(\Gamma,\chi) : t_f \in [0,T]\right\} \sim \frac{\vol\left(\Gamma \backslash \Hb\right)}{4\pi} T^2.\]

For $\Gamma = \SL_2(\Z)$, so that $\chi$ is the trivial character, and $\PP$ consisting of a single prime $p$, \hyperref[Sarnakthm]{Theorem \ref*{Sarnakthm}} is a result of Blomer, Buttcane, and Raulf \cite[Proposition 1]{BBR}, improving on a slightly weaker result of Sarnak \cite[Theorem 1.1]{Sarnak}, who uses the Selberg trace formula in place of the Kuznetsov formula and obtains instead (see \cite[Footnote 1]{BBR})
\[\#\left\{f \in \BB_0\left(\SL_2(\Z)\right) : t_f \in [0,T], \ \left|\lambda_f(p)\right| \geq \alpha\right\} \ll \left(T^2\right)^{1 - 2 \frac{\log \alpha / 2}{\log p}}.\]

\begin{theorem}\label{Huxleythm}
For any fixed finite (possibly empty) collection of primes $\PP$ not dividing $q$, any $\alpha_0 \in (0,1/2)$, $\alpha_p  \in (2, p^{1/2} + p^{-1/2})$, and $0 \leq \mu_0,\mu_p \leq 1$ for all $p \in \PP$ with $\mu_0 + \sum_{p \in \PP} \mu_p = 1$, we have that
\begin{multline}\label{HuxleyGamma1}
\#\left\{f \in \BB_0(\Gamma_1(q)) : it_f \in (\alpha_0,1/2), \ \left|\lambda_f(p)\right| \geq \alpha_p \text{ for all $p \in \PP$}\right\}	\\
\ll_{\e} \vol(\Gamma_1(q) \backslash \Hb)^{1 - 3 \left(\mu_0 \alpha_0 + \sum_{p \in \PP} \mu_p \frac{\log \alpha_p/2}{\log p}\right) + \e}
\end{multline}
\begin{multline}\label{HuxleyGamma}
\#\left\{f \in \BB_0(\Gamma(q)) : it_f \in (\alpha_0,1/2), \ \left|\lambda_f(p)\right| \geq \alpha_p \text{ for all $p \in \PP$}\right\}	\\
\ll_{\e} \vol(\Gamma(q) \backslash \Hb)^{1 - \frac{8}{3} \left(\mu_0 \alpha_0 + \sum_{p \in \PP} \mu_p \frac{\log \alpha_p/2}{\log p}\right) + \e}.
\end{multline}
\begin{multline}\label{HuxleyGamma0}
\#\left\{f \in \BB_0(q,\chi) : it_f \in (\alpha_0,1/2), \ \left|\lambda_f(p)\right| \geq \alpha_p \text{ for all $p \in \PP$}\right\}	\\
\ll_{\e} \vol(\Gamma_0(q) \backslash \Hb)^{1 - 4 \left(\mu_0 \alpha_0 + \sum_{p \in \PP} \mu_p \frac{\log \alpha_p/2}{\log p}\right) + \e}	\\
\times \min\left\{\dot{Q}^{4 \left(\mu_0 \alpha_0 + \sum_{p \in \PP} \mu_p \frac{\log \alpha_p/2}{\log p}\right)}, \ddot{Q}^{1 - 4 \left(\mu_0 \alpha_0 + \sum_{p \in \PP} \mu_p \frac{\log \alpha_p/2}{\log p}\right)}\right\}.
\end{multline}
\end{theorem}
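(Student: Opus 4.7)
The plan is to mirror the proof of Theorem \ref{Sarnakthm}, using the Kuznetsov formula for $\Gamma_0(q,\chi)$, $\Gamma_1(q)$, and $\Gamma(q)$ together with a Hecke amplifier, but with the test function chosen to detect exceptional Laplacian eigenvalues rather than the spectral window $t_f\in[0,T]$. For a parameter $x>1$, I would take a nonnegative, even, entire test function $h$ that is large on the exceptional spectrum, say $h(t)=(x^{2it}+x^{-2it})^2\varphi(t)$ for a fixed nonnegative Schwartz mollifier $\varphi$ with $\varphi(ir)\geq 1$ on $[0,1/2]$. Such an $h$ satisfies $h(ir)\geq x^{4r}\geq x^{4\alpha_0}$ for $r\in(\alpha_0,1/2)$, while remaining bounded with rapid decay on $\R$, so the exceptional Laplacian depth is converted into a multiplicative amplification factor $x^{4\alpha_0}$ on the spectral side, playing the role of the $T^2$ spectral window from Theorem \ref{Sarnakthm}.

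Next, I would amplify by Hecke eigenvalues exactly as in the proof of Theorem \ref{Sarnakthm}: fix a parameter $L$, set $\ell_p$ to be the nearest positive integer to $\mu_p L/\log p$, and form the amplifier $A(f)\defeq\prod_{p\in\PP}\lambda_f(p)^{2\ell_p}$. When $|\lambda_f(p)|\geq\alpha_p$ for all $p\in\PP$, expansion via the Chebyshev-type Hecke relations writes $A(f)$ as a short linear combination of $\lambda_f(n)$ with $n$ supported on $\PP$ and $n\leq e^{O(L)}$; the ratio of $A(f)$ to the trivial $4^{\sum_p\ell_p}$ expansion bound produces a net Hecke amplification of roughly $\exp\bigl(2L\sum_{p\in\PP}\mu_p\log(\alpha_p/2)/\log p\bigr)$. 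Inserting $A(f)^2$ along with the standard Hoffstein-Lockhart lower bound $|\rho_f(1)|^2/\|f\|^2\gg_\e q^{-\e}/\vol(\Gamma\backslash\Hb)$ into the Kuznetsov formula, positivity on the spectral side gives a lower bound for the exceptional count multiplied by $x^{4\alpha_0}\exp\bigl(4L\sum_p\mu_p\log(\alpha_p/2)/\log p\bigr)/\vol(\Gamma\backslash\Hb)$.

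On the Kloosterman side, the Bessel transform $\check{h}$ is essentially concentrated near $c\asymp x^2$, and after Hecke multiplicativity one is reduced to a sum of Kloosterman sums $S_\chi(m,n;c)$ for $\Gamma_0(q,\chi)$ (the $\Gamma_1(q)$ case follows by spectral decomposition over nebentypi $\chi\pmod q$, and the $\Gamma(q)$ case by induction to $\Gamma_1$ at a larger level) with $mn\leq e^{O(L)}$, $c\equiv 0\pmod q$, and $c\asymp x^2$. Bounding these via the Weil bound and the local analysis at primes dividing $q$ produces an upper bound polynomial in $x$, $e^L$, and $q$. Balancing $L$ and $x$ against $\vol(\Gamma\backslash\Hb)$, so that the budget $\mu_0+\sum_p\mu_p=1$ is distributed between the Laplacian via $x$ and the primes via the $\ell_p$, yields the advertised exponents, the different prefactors $4,3,\tfrac{8}{3}$ reflecting how $\vol(\Gamma\backslash\Hb)$ relates to $q$ in each family.

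The main obstacle is the last step: controlling the Bessel transform of $h$ and the $q$-dependence of the Kloosterman side simultaneously, and in particular, for $\Gamma_0(q,\chi)$ with a ramified nebentypus, extracting the precise local behaviour of $S_\chi(m,n;c)$ at primes $p^\alpha\parallel q$ with $p^\gamma\parallel q_\chi$ that gives rise to the $\min\{\dot{Q}^\bullet,\ddot{Q}^\bullet\}$ factor in \eqref{HuxleyGamma0}. Once the analogous balance underlying Theorem \ref{Sarnakthm} is in hand, the modification here amounts to replacing the spectral budget $T^2$ by the Laplacian amplification $x^{4\alpha_0}$ in the same optimisation, which is why $\mu_0\alpha_0$ enters the exponents on the same footing as $\mu_p\log(\alpha_p/2)/\log p$.
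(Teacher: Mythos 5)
Your overall architecture --- Rankin's trick with the weight $X^{2it_f}$, a test function that is nonnegative on the whole spectrum and of size $X^{2it_f}$ on the exceptional segment, the Chebyshev expansion of $\prod_{p \in \PP}|\lambda_f(p)|^{2\ell_p}$ into Hecke eigenvalues $\lambda_f(p^{2j})$, positivity to discard the continuous and non-exceptional spectrum, a Mellin/contour (or concentration) bound for the Bessel transform of size roughly $\sqrt{X}\,(\sqrt{mn}/c)^{1/2}$, and a final optimisation of $X$ and the $\ell_p$ --- is the paper's. The genuine gap is in your treatment of the Kloosterman term for $\Gamma_1(q)$, which is exactly where the improvement over Huxley's exponent $2$ is won. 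You propose to decompose over nebentypus $\chi \pmod q$ and then bound each $S_{\chi}(m,n;c)$ by a Weil-type bound plus local analysis at $p \mid q$. The paper instead sums the Kuznetsov formulae over all $\chi \pmod q$ with $\chi(-1)=(-1)^{\kappa}$ \emph{before} estimating anything: by character orthogonality \eqref{sumKloosterman} the character average collapses to $\varphi(q)$ times the incomplete Kloosterman sum $S_{a(q)}(m,n;c)$ restricted to $d \equiv a \pmod q$, which is bounded trivially at the $q$-part of $c$ and by Weil away from $q$, giving \eqref{Gamma1(q)csigma}, i.e.\ roughly $q^{-3/2}$ for the $c$-sum and hence $\approx q^{-1/2}$ after multiplying by $\varphi(q)$. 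This makes the amplification budget $X\prod_{p}p^{\ell_p} \approx q^{3} = \vol(\Gamma_1(q)\backslash\Hb)^{3/2}$, which is where the exponent $3$ comes from. With your per-character Weil bound the $c$-sum is only $\approx q^{-1+\e}$ per character, so after summing over the $\varphi(q)$ nebentypus you lose a factor of about $\sqrt{q}$ at the dominant moduli $c \asymp q$, the budget shrinks to $\approx q^{2}$, and the optimisation returns only $\vol(\Gamma_1(q)\backslash\Hb)^{1-2(\mu_0\alpha_0+\sum_p \mu_p \frac{\log \alpha_p/2}{\log p})+\e}$, i.e.\ Huxley's theorem rather than \eqref{HuxleyGamma1}. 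Worse, for nebentypus of large conductor dividing $q$ the Weil-quality bound for $S_{\chi}(m,n;c)$ can genuinely fail (this is precisely why $\dot{Q}$ exists), so the per-character route does not even give the exponent $2$ uniformly without further input; for $\Gamma(q)$ the reduction to level $q^2$ with $c \equiv 0 \pmod{q^2}$ is the right move, but the paper again passes through $S_{a(q)}$ via \eqref{Gamma(q)csigma} rather than per-character bounds.

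For \eqref{HuxleyGamma0} you correctly identify the local analysis producing $\dot{Q}$ (\hyperref[weakWeillemma]{Lemma \ref*{weakWeillemma}}, imported from Knightly--Li) as unresolved, but note that the second term in the $\min$, the $\ddot{Q}^{1-4(\cdot)}$ bound, does not come from any Kloosterman estimate at level $q$ at all: the paper obtains it by embedding $\AA_0(q,\chi) \subset \AA_0(q\ddot{Q},\chi\psi^2)$ for a primitive $\psi$ modulo $\ddot{Q}$, which forces $\dot{Q}(q\ddot{Q},q_{\chi\psi^2})=1$ and allows the $\dot{Q}$-version of the bound to be applied at the larger level; this twisting/embedding step is absent from your plan. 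Two smaller points: since $\overline{\lambda_f}(p)=\overline{\chi}(p)\lambda_f(p)$ is not real for non-real $\chi$, you should amplify with $\prod_p|\lambda_f(p)|^{2\ell_p}$ (as in \eqref{cuspHecke2ell}) or $|A(f)|^2$ rather than $A(f)^2$; and the passage from the spectral weights to a clean count needs not only a Hoffstein--Lockhart-type lower bound for $|\rho_f(1)|^2$ (done in the paper via \hyperref[L2idlemma]{Lemma \ref*{L2idlemma}} and Li's theorem) but also the oldform normalisation $\xi_f \gg 1$ of \hyperref[xif(1)lemma]{Lemma \ref*{xif(1)lemma}}, since the basis \eqref{basisGamma1(q)} contains oldforms.
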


When $\PP$ is empty and $\chi$ is the trivial congruence character, \hyperref[Huxleythm]{Theorem \ref*{Huxleythm}} improves upon a result of Huxley \cite{Hux}, who uses the Selberg trace formula in place of the Kuznetsov formula and obtains instead this result with the exponent $2$ for each of the three congruence subgroups instead of $3$, $8/3$, and $4$ respectively. When $\PP$ is empty and $\chi$ is the trivial congruence character, \eqref{HuxleyGamma0} is a result of Iwaniec \cite[Theorem 11.7]{Iwa} ; see also \cite[(16.61)]{IK}.

Since
\[\lfloor \frac{3\alpha + 1}{4} \rfloor - \frac{\alpha}{2} \leq \frac{3 \alpha}{10},\]
so that $\dot{Q} \ll \vol(\Gamma_0(q) \backslash \Hb)^{3/10}$, the right-hand side of \eqref{SarnakGamma0} is bounded by
\[\vol(\Gamma_0(q) \backslash \Hb)^{1 - \frac{14}{5} \sum_{p \in \PP} \mu_p \frac{\log \alpha_p/2}{\log p} + \e} \left(T^2\right)^{1 - 4 \sum_{p \in \PP} \mu_p \frac{\log \alpha_p/2}{\log p} + \e},\]
while the right-hand side of \eqref{HuxleyGamma0} is bounded by
\[\vol(\Gamma_0(q) \backslash \Hb)^{1 - \frac{14}{5} \left(\mu_0 \alpha_0 + \sum_{p \in \PP} \mu_p \frac{\log \alpha_p/2}{\log p}\right) + \e}.\]
On the other hand, taking $\PP$ to consist of a single prime in \eqref{SarnakGamma0} recovers the Selberg bound $\lambda_f(p) \ll_{\e} p^{1/4 + \e}$ for an individual element $f \in \BB_{\kappa}(q,\chi)$ by taking $T$ sufficiently large, while taking $\PP$ to be empty in \eqref{HuxleyGamma0} recovers the Selberg bound $\lambda_f \geq 3/16$ by embedding $f$ in $\BB_{\kappa}(qQ,\chi)$ and taking $Q$ sufficiently large.

Finally, we also prove the following improvements of \hyperref[Sarnakthm]{Theorems \ref*{Sarnakthm}} and \ref{Huxleythm} for $\Gamma_1(q)$ with $q$ squarefree via a twisting argument.

\begin{theorem}\label{improvedthm}
When $q$ is squarefree, \eqref{SarnakGamma1} and \eqref{HuxleyGamma1} hold with the exponent $3$ replaced by $4$.
\end{theorem}

\subsection{Idea of Proof}

By Rankin's trick (which is to say Chebyshev's inequality), it suffices to find bounds for
\[\sum_{\substack{f \in \BB_{\kappa}\left(\Gamma, \chi\right) \\ t_f \in [0,T]}} \prod_{p \in \PP} \left|\lambda_f(p)\right|^{2\ell_p}, \qquad \sum_{\substack{f \in \BB_0\left(\Gamma, \chi\right) \\ it_f \in (0,1/2)}} X^{2it_f} \prod_{p \in \PP} \left|\lambda_f(p)\right|^{2\ell_p}\]
for nonnegative integers $\ell_p$ and a positive real number $X \geq 1$ to be chosen. To bound these quantities, we begin with the Kuznetsov formula for $\BB_{\kappa}(q,\chi)$; we then use the Atkin--Lehner decomposition to turn this into a Kuznetsov formula for $\BB_{\kappa}(\Gamma,\chi)$. We take a test function in the Kuznetsov formula that localises the spectral sum to cusp forms with $t_f \in [0,T]$ in the case of \hyperref[Sarnakthm]{Theorem \ref*{Sarnakthm}} and to cusp forms with $it_f \in (0,1/2)$ in the case of \hyperref[Huxleythm]{Theorem \ref*{Huxleythm}}. We use the Hecke relations to introduce powers of the Hecke eigenvalues into the Kuznetsov formula. By positivity, we discard the contribution of the continuous spectrum, and we are left with bounding the right-hand side of the Kuznetsov formula.

The chief novelty of the proof is the bounds for sums of Kloosterman sums in the Kuznetsov formula for each congruence subgroup. As well as the usual Weil bound, we use character orthogonality for $\Gamma_1(q)$ and $\Gamma(q)$, at which point we only use the trivial bound for the resulting sum of Kloosterman sums. For $\Gamma_0(q)$ and $\chi$ the principal character, we may also use the Weil bound, but for $\chi$ nonprincipal, additional difficulties arise in bounding the Kloosterman sum, with the bound possibly depending on the conductor of $\chi$; it is for this reason that the bounds \eqref{SarnakGamma0} and \eqref{HuxleyGamma0} involve $\dot{Q}$, for $\dot{Q}$ arises when only weaker bounds than the Weil bound are possible for the Kloosterman sums involved.

We also highlight the key trick to proving \hyperref[improvedthm]{Theorem \ref*{improvedthm}}, namely that the Laplacian eigenvalue and absolute value of a Hecke eigenvalue of a Maa\ss{} form remain unchanged under twisting by a Dirichlet character. Twisting may alter the level of a Maa\ss{} form, yet \hyperref[improvedthm]{Theorem \ref*{improvedthm}} involves a favourable situation in which the resulting family of twisted Maa\ss{} forms are sufficiently well-behaved that we are able to improve the exponent in the density theorem.

It is worth mentioning that the results in this paper ought to generalise naturally to cusp forms on $\GL_2$ over arbitrary number fields $F$. In \cite{BrMia}, Bruggeman and Miatello prove a form of the Kuznetsov formula for $\GL_2$ over a totally real field and use this to prove weighted Weyl law for cusp forms. Similarly, in \cite{Mag}, Maga proves a semi-ad\`{e}lic version of the Kuznetsov formula for $\GL_2$ over an arbitrary number field. In the former case, this formula is valid for congruence subgroups of the form $\Gamma_0(\qq)$ for a nonzero integral ideal $\qq$ of the ring of integers $\OO_F$ of $F$ and arbitrary congruence characters $\chi$ modulo $\qq$, while the latter only treats the case of trivial congruence character but should easily be able to be generalised to arbitrary congruence character; this is precisely what is required for density theorems for the congruence subgroups $\Gamma_0(\qq)$, $\Gamma_1(\qq)$, and $\Gamma(\qq)$.

\section{The Kuznetsov Formula}

The background on automorphic forms and notation in this section largely follows \cite{DFI}; see \cite[Section 4]{DFI} for more details. Let $\kappa \in \{0,1\}$, and let $\chi$ be a primitive Dirichlet character modulo $q_{\chi}$, where $q_{\chi}$ divides $q$, satisfying $\chi(-1) = (-1)^{\kappa}$; this defines a congruence character of $\Gamma_0(q)$ via $\chi(\gamma) \defeq \chi(d)$ for $\gamma = \left(\begin{smallmatrix} a & b \\ c & d \end{smallmatrix}\right) \in \Gamma_0(q)$. We denote by $L^2\left(\Gamma_0(q) \backslash \Hb,\kappa,\chi\right)$ the $L^2$-completion of the space of all smooth functions $f : \Hb \to \C$ that are of moderate growth and satisfy $f(\gamma z) = \chi(\gamma) j_{\gamma}(z)^{\kappa} f(z)$. This space has the spectral decomposition
\[L^2\left(\Gamma_0(q) \backslash \Hb,\kappa,\chi\right) = \AA_{\kappa}(q,\chi) \oplus \EE_{\kappa}(q,\chi)\]
with respect to the weight $\kappa$ Laplacian, where $\AA_{\kappa}(q,\chi) \defeq \AA_{\kappa}\left(\Gamma_0(q),\chi\right)$ is the space spanned by Maa\ss{} cusp forms of weight $\kappa$, level $q$, and nebentypus $\chi$, and $\EE_{\kappa}(q,\chi)$ is the space spanned by incomplete Eisenstein series parametrised by the cusps $\aa$ of $\Gamma_0(q) \backslash \Hb$ that are singular with respect to $\chi$.

We denote by $\BB_{\kappa}(q,\chi)$ an orthonormal basis of Maa\ss{} cusp forms $f \in \AA_{\kappa}(q,\chi)$ normalised to have $L^2$-norm $1$:
\[\langle f, f \rangle_q \defeq \int_{\Gamma_0(q) \backslash \Hb} |f(z)|^2 \, d\mu(z) = 1,\]
where $d\mu(z) = \dfrac{dx \, dy}{y^2}$ is the $\SL_2(\R)$-invariant measure on $\Hb$. Later we will use the Atkin--Lehner decomposition of $\AA_{\kappa}(q,\chi)$ in order to specify that $\BB_{\kappa}(q,\chi)$ can be chosen to consist of linear combinations of Hecke eigenforms. The Fourier expansion of $f \in \BB_{\kappa}(q,\chi)$ is
\[f(z) = \sum_{\substack{n = -\infty \\ n \neq 0}}^{\infty} \rho_f(n) W_{\sgn(n) \frac{\kappa}{2}, it_f}(4\pi|n|y) e(nx),\]
where $W_{\alpha,\beta}$ is the Whittaker function and
\[\rho_f(n) W_{\sgn(n) \frac{\kappa}{2}, it_f}(4\pi|n|y) = \int_{0}^{1} f(z) e(-nx) \, dx.\]

For a singular cusp $\aa$, we define the Eisenstein series
\[E_{\aa}(z,s,\chi) \defeq \sum_{\gamma \in \Gamma_{\aa} \backslash \Gamma_0(q)} \overline{\chi}(\gamma) j_{\sigma_{\aa}^{-1} \gamma}(z)^{-\kappa} \Im\left(\sigma_{\aa}^{-1} \gamma z\right)^s,\]
which is absolutely convergent for $\Re(s) > 1$ and extends meromorphically to $\C$, with the Fourier expansion
\[\delta_{\aa,\infty} y^{1/2 + it} + \varphi_{\aa,\infty}\left(\frac{1}{2} + it,\chi\right) y^{1/2 - it} + \sum_{\substack{n = -\infty \\ n \neq 0}}^{\infty} \rho_{\aa}(n,t,\chi) W_{\sgn(n) \frac{\kappa}{2}, it}(4\pi|n|y) e(nx)\]
for $s = 1/2 + it$ with $t \in \R \setminus \{0\}$, where
\begin{align*}
\delta_{\aa,\infty} y^{1/2 + it} + \varphi_{\aa,\infty}\left(\frac{1}{2} + it,\chi\right) y^{1/2 - it} & \defeq \int_{0}^{1} E_{\aa}\left(z, \frac{1}{2} + it, \chi\right) \, dx,	\\
\rho_{\aa}(n,t,\chi) W_{\sgn(n) \frac{\kappa}{2}, it}(4\pi|n|y) & \defeq \int_{0}^{1} E_{\aa}\left(z, \frac{1}{2} + it, \chi\right) e(-nx) \, dx.
\end{align*}
The subspace $\EE_{\kappa}(q,\chi)$ consists of functions $g \in L^2\left(\Gamma_0(q) \backslash \Hb, \kappa, \chi\right)$ that are orthogonal to every Maa\ss{} cusp form $f \in \AA_{\kappa}(q,\chi)$; it is the $L^2$-closure of the space spanned by incomplete Eisenstein series, which are functions of the form
\begin{equation}\label{incompleteEiseneq}
E_{\aa}(z,\psi,\chi) \defeq \frac{1}{2\pi i} \int_{\sigma - i\infty}^{\sigma + i\infty} E_{\aa}(z,s,\chi) \widehat{\psi}(s) \, ds 
\end{equation}
for some singular cusp $\aa$ and some smooth function of compact support $\psi : \R^+ \to \C$, where $\sigma > 1$ and
\[\widehat{\psi}(s) \defeq \int_{0}^{\infty} \psi(x) x^{-s} \, \frac{dx}{x}.\]

\begin{theorem}[{\cite[Proposition 5.2]{DFI}}]
For $m,n \geq 1$ and $r \in \R$,
\begin{multline*}
\sum_{f \in \BB_{\kappa}(q,\chi)} \frac{4\pi \sqrt{mn} \overline{\rho_f}(m) \rho_f(n)}{\cosh \pi (r - t_f) \cosh \pi (r + t_f)} + \sum_{\aa} \int_{-\infty}^{\infty} \frac{\sqrt{mn} \overline{\rho_{\aa}}(m,t,\chi) \rho_{\aa}(n,t,\chi)}{\cosh \pi (r - t) \cosh \pi (r + t)} \, dt	\\
= \frac{\left|\Gamma\left(1 - \frac{\kappa}{2} - ir\right)\right|^2}{\pi^2} \left(\delta_{m,n} + \sum_{\substack{c = 1 \\ c \equiv 0 \hspace{-.25cm} \pmod{q}}}^{\infty} \frac{S_{\chi}(m,n;c)}{c} I_{\kappa}\left(\frac{4\pi \sqrt{mn}}{c},r\right)\right),
\end{multline*}
where
\begin{align*}
S_{\chi}(m,n;c) & \defeq \sum_{d \in (\Z/c\Z)^{\times}} \chi(d) e\left(\frac{md + n\overline{d}}{c}\right),	\\
I_{\kappa}(t,r) & \defeq -2t \int_{-i}^{i} (-i\zeta)^{\kappa - 1} K_{2ir}(\zeta t) \, d\zeta,
\end{align*}
with the latter integral being over the semicircle $|z| = 1$, $\Re(z) > 0$. 
\end{theorem}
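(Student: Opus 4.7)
The plan is to compute the $L^2$ inner product of two weight-$\kappa$ Maa\ss--Poincar\'e series in two different ways and compare. For $m \geq 1$ and $r \in \R$, I would introduce the Poincar\'e series
\[
P_m(z, r) \defeq \sum_{\gamma \in \Gamma_{\infty} \backslash \Gamma_0(q)} \overline{\chi}(\gamma) \, j_{\gamma}(z)^{-\kappa} \, \Im(\gamma z)^{1/2} \, W_{\kappa/2, ir}\bigl(4\pi m \Im(\gamma z)\bigr) \, e\bigl(m \Re(\gamma z)\bigr),
\]
which by the exponential decay of the Whittaker function lies in $L^2\bigl(\Gamma_0(q) \backslash \Hb, \kappa, \chi\bigr)$; to ensure that every subsequent manipulation is absolutely convergent, one first inserts an auxiliary factor $\Im(\gamma z)^s$ with $\Re(s)$ large and analytically continues to $s = 0$ at the end.

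Geometrically, I would compute $\langle P_m, P_n\rangle_q$ by unfolding one copy onto $\Gamma_{\infty} \backslash \Hb$, reducing the integral to
\[
\int_0^\infty [P_m]_n(y) \, \overline{W_{\kappa/2, ir}(4\pi n y)} \, \frac{dy}{y^{3/2}},
\]
where $[P_m]_n(y)$ is the $n$-th Fourier coefficient of $P_m$ at $\infty$. Decomposing $\Gamma_{\infty} \backslash \Gamma_0(q) / \Gamma_{\infty}$ via Bruhat expresses $[P_m]_n(y)$ as $\delta_{m,n}$ times the Whittaker function plus a sum over $c \equiv 0 \pmod q$ with $c > 0$; the sum over residues $d \pmod c$ of the additive twist collapses to $S_{\chi}(m, n; c)$, while the remaining $x$-integral, combined with the outer $y$-integral, matches the kernel $I_{\kappa}(4\pi\sqrt{mn}/c, r) / c$ after a standard change of variables identifying it with the contour representation built into the definition of $I_{\kappa}$.

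Spectrally, I would expand $P_m$ and $P_n$ via the spectral decomposition of $L^2\bigl(\Gamma_0(q) \backslash \Hb, \kappa, \chi\bigr)$, so that
\[
\langle P_m, P_n \rangle_q = \sum_{f \in \BB_{\kappa}(q,\chi)} \langle P_m, f \rangle_q \, \overline{\langle P_n, f\rangle_q} + \sum_{\aa} \frac{1}{4\pi} \int_{-\infty}^\infty \langle P_m, E_{\aa}(\cdot,1/2+it,\chi)\rangle_q \, \overline{\langle P_n, E_{\aa}(\cdot,1/2+it,\chi)\rangle_q} \, dt.
\]
Each inner product $\langle P_m, f \rangle_q$ unfolds against $P_m$ to a single integral on $(0, \infty)$ of two Whittaker $W$-functions of second parameters $ir$ and $-it_f$, which by a classical Mellin identity evaluates to $\overline{\rho_f(m)}$ times a ratio of Gamma factors. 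Forming the product $\langle P_m, f\rangle_q \, \overline{\langle P_n, f\rangle_q}$ and applying $|\Gamma(1/2 + ix)|^2 = \pi / \cosh \pi x$ collapses the double ratio precisely to $|\Gamma(1 - \kappa/2 - ir)|^2 / \cosh \pi(r - t_f) \cosh \pi(r + t_f)$, multiplied by $4\pi \sqrt{mn} \, \overline{\rho_f(m)} \rho_f(n)$; the analogous calculation handles the Eisenstein term. Equating the geometric and spectral expressions yields the stated identity.

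The principal difficulty lies in matching the two realizations of the kernel on the nose. On the geometric side the $x$-integral over the Bruhat cell naturally produces a $K$-Bessel function, while on the spectral side the product of two Whittaker $W$-functions produces a ratio of Gamma values; rewriting the former as the contour integral $I_{\kappa}$ in the statement and collapsing the latter to the $\cosh$-product form is essentially bookkeeping using classical Whittaker--Bessel identities, but the weight $\kappa = 1$ case requires care with the sign $\sgn(n)$ and with the branch of $(-i\zeta)^{\kappa - 1}$ on the semicircle $|\zeta| = 1$. Convergence of the continuous-spectrum integral is handled by the $\Re(s) > 1$ regularization mentioned above followed by analytic continuation, as is standard in the Kuznetsov literature.
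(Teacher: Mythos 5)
The paper itself gives no proof of this statement: it is quoted directly from \cite[Proposition 5.2]{DFI}, so the only meaningful comparison is with the argument given there. Your outline is essentially that argument: an inner product of two weight-$\kappa$, nebentypus-$\chi$ Poincar\'e series computed twice, once by unfolding (delta term plus $S_{\chi}(m,n;c)$ against a Bessel-type kernel, which is where $I_{\kappa}$ and the branch issues for $\kappa = 1$ live) and once by Parseval, with the Whittaker--Mellin integrals producing Gamma factors that the reflection formula collapses to the $\cosh \pi(r - t_f) \cosh \pi(r + t_f)$ denominators and the prefactor $\left|\Gamma\left(1 - \frac{\kappa}{2} - ir\right)\right|^2/\pi^2$; note also that your factor $\frac{1}{4\pi}$ in the Eisenstein term is consistent with the statement, which carries $4\pi\sqrt{mn}$ in the discrete term but only $\sqrt{mn}$ in the continuous one. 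The one structural deviation is your choice of Poincar\'e series: building $W_{\kappa/2, ir}$ directly into the series puts you exactly at the edge of convergence (the sum behaves like an Eisenstein series at $s = 1$), whereas the cited proof works with absolutely convergent series depending on an auxiliary complex parameter and reaches the spectral point by analytic continuation; your regularisation by $\Im(\gamma z)^s$ with continuation to $s = 0$ can be made to work, but that continuation (uniform convergence of the unfolded $c$-sum via the Weil bound and the small-argument behaviour of $I_{\kappa}$, and of the spectral expansion via the exponential decay of the $\cosh$ weights together with local bounds on $\sum_f |\rho_f(m)|^2$) is precisely the step you leave implicit and is the main thing to write out. Two further small points are harmless: the residual spectrum (constants, when $\kappa = 0$ and $\chi$ principal) drops out since $m, n \geq 1$, and the constant bookkeeping you defer is routine.
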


By the reflection formula for the gamma function, we have that for $r \in \R$,
\[\left|\Gamma\left(1 - \frac{\kappa}{2} - ir\right)\right|^2 = \begin{dcases*}
\frac{\pi r}{\sinh \pi r} & if $\kappa = 0$,	\\
\frac{\pi}{\cosh \pi r} & if $\kappa = 1$.
\end{dcases*}\]

Given a sufficiently well-behaved function $h$, we may multiply both sides of the pre-Kuznetsov formula for $\kappa = 0$ by
\[\frac{1}{2} \left(h\left(r + \frac{i}{2}\right) + h\left(r - \frac{i}{2}\right)\right) \cosh \pi r\]
and then integrate both sides from $-\infty$ to $\infty$ with respect to $r$. This yields the following Kuznetsov formula.

\begin{theorem}[{see \cite[Section 2.1.4]{BHM}, \cite[Theorem 16.3]{IK}, \cite[Equation (7.32)]{KL}}]
Let $\delta > 0$, and let $h$ be a function that is even, holomorphic in the horizontal strip $|\Im(t)| \leq 1/2 + \delta$, and satisfies $h(t) \ll (|t| + 1)^{-2 - \delta}$. Then
\begin{multline*}
\sum_{f \in \BB_0(q,\chi)} 4\pi \sqrt{mn} \overline{\rho_f}(m) \rho_f(n) \frac{h(t_f)}{\cosh \pi t_f}	\\
+ \sum_{\aa} \int_{-\infty}^{\infty} \sqrt{mn} \overline{\rho_{\aa}}(m,t,\chi) \rho_{\aa}(n,t,\chi) \frac{h(t)}{\cosh \pi t} \, dt	\\
= \delta_{mn} g_0 + \sum_{\substack{c = 1 \\ c \equiv 0 \hspace{-.25cm} \pmod{q}}}^{\infty} \frac{S_{\chi}(m,n;c)}{c} g_0\left(\frac{4\pi \sqrt{mn}}{c}\right),
\end{multline*}
where
\begin{align*}
g_0 & \defeq \frac{1}{\pi} \int_{-\infty}^{\infty} r h(r) \tanh \pi r \, dr,	\\
g_0(x) & \defeq 2i \int_{-\infty}^{\infty} J_{2ir}(x) \frac{r h(r)}{\cosh \pi r} \, dr.
\end{align*}
\end{theorem}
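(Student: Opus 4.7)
Following the recipe stated in the paragraph preceding the theorem, the plan is to multiply both sides of the pre-Kuznetsov formula for $\kappa = 0$ by the weight $\tfrac{1}{2}(h(r+i/2)+h(r-i/2))\cosh\pi r$ and integrate in $r$ over $\R$. The hypotheses on $h$---evenness, holomorphy in the strip $|\Im(t)|\leq 1/2+\delta$, and the decay $h(t)\ll(|t|+1)^{-2-\delta}$---together with the usual bounds on $\rho_f(m)$, $\rho_\aa(m,t,\chi)$, and Kloosterman sums, ensure absolute convergence of every integral and sum that arises, so Fubini permits the $r$-integral to be exchanged with the cuspidal sum over $\BB_0(q,\chi)$, the Eisenstein $t$-integrals, and the sum over $c$ in the Kloosterman term.

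The essential spectral calculation is the identity
\[\int_{-\infty}^\infty \frac{\tfrac{1}{2}(h(r+i/2)+h(r-i/2))\cosh\pi r}{\cosh\pi(r-t_f)\cosh\pi(r+t_f)}\, dr \;=\; \frac{h(t_f)}{\cosh\pi t_f},\]
valid for every $t_f \in \R \cup i(-1/2,1/2)$. To prove it I would substitute $s = r+i/2$ in the first summand and $s = r-i/2$ in the second, moving the contour of integration to $\R + i/2$ and $\R - i/2$ respectively; the identities $\cosh(z\pm i\pi/2) = \pm i\sinh z$ then transform each integrand into $\pm i h(s)\sinh\pi s/\bigl(\sinh\pi(s-t_f)\sinh\pi(s+t_f)\bigr)$ on $\R\pm i/2$, and the half-sum becomes $\tfrac{i}{2}\bigl(\int_{\R+i/2}-\int_{\R-i/2}\bigr)$ of this common integrand. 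Closing the difference as a clockwise rectangle around the strip $|\Im s|<1/2$, the residue theorem at the simple poles $s = \pm t_f$ (simplified via $\sinh 2\pi t_f = 2\sinh\pi t_f\cosh\pi t_f$ together with evenness of $h$) gives the right-hand side. Multiplying by $4\pi\sqrt{mn}\overline{\rho_f}(m)\rho_f(n)$ and summing over $f$ reproduces the cuspidal spectral contribution, and the identical transformation (with $t$ in place of $t_f$) handles each Eisenstein integral.

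The diagonal term is treated by the same method: the gamma factor $|\Gamma(1-ir)|^2/\pi^2 = r/(\pi\sinh\pi r)$ combines with the weight to give $\delta_{m,n}(2\pi)^{-1}\int_\R r(h(r+i/2)+h(r-i/2))\coth\pi r\, dr$, and the identity $\coth(z\pm i\pi/2) = \tanh z$, combined with a contour shift back to $\R$ (legitimate because the factors $s\mp i/2$ cancel the poles of $\tanh\pi s$ at $s = \pm i/2$) and evenness of $h$, collapses this to $\delta_{m,n}\pi^{-1}\int_\R r h(r)\tanh\pi r\, dr = \delta_{m,n} g_0$. The main remaining obstacle is the Kloosterman term, where one must establish
\[\int_{-\infty}^\infty \frac{r\coth\pi r}{2\pi}\bigl(h(r+i/2)+h(r-i/2)\bigr)\, I_0\!\left(\tfrac{4\pi\sqrt{mn}}{c},\, r\right) dr \;=\; g_0\!\left(\tfrac{4\pi\sqrt{mn}}{c}\right).\]
I would substitute the defining semicircle integral for $I_0$, interchange the $r$- and $\theta$-integrations, and convert the resulting $K_{2ir}$ into a combination of $J_{\pm 2ir}$ using the classical identities $K_\nu(z) = (\pi/2)(I_{-\nu}(z) - I_\nu(z))/\sin\pi\nu$ and $J_\nu(iz) = i^\nu I_\nu(z)$; a final symmetrisation $r\mapsto-r$, justified by evenness of $h$, folds the result into the definition $g_0(x) = 2i\int J_{2ir}(x)\, rh(r)/\cosh\pi r\, dr$. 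These Bessel manipulations are classical and are carried out in detail in \cite[Section 2.1.4]{BHM} and \cite[Theorem 16.3]{IK}, cited in the statement.
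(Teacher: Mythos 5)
Your proposal is correct and follows exactly the route the paper indicates: it executes the recipe stated just before the theorem (multiply the $\kappa=0$ pre-Kuznetsov formula by $\tfrac12\left(h(r+i/2)+h(r-i/2)\right)\cosh\pi r$ and integrate over $r\in\R$), and your contour-shift evaluations of the spectral weight and of the diagonal term check out, including the case of exceptional $t_f$ with $it_f\in(0,1/2)$. The remaining Bessel-kernel identity for the Kloosterman term is deferred to the same classical references the paper itself cites, so the proof is in substance identical to the paper's.
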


The left-hand side of the Kuznetsov formula is called the spectral side; the first term is the contribution from the discrete spectrum, while the second term is the contribution from the continuous spectrum. The right-hand side of the Kuznetsov formula is called the geometric side; the first term is the delta term and the second term is the Kloosterman term.

\section{Decomposition of Spaces of Modular Forms}

\subsection{Eisenstein Series and Hecke Operators}

The space $\EE_{\kappa}(q,\chi)$ is spanned by incomplete Eisenstein series of the form \eqref{incompleteEiseneq}, which are obtained by integrating test functions against Eisenstein series indexed by singular cusps $\aa$; in this sense, the Eisenstein series $E_{\aa}(z,s,\chi)$ are a spanning set for $\EE_{\kappa}(q,\chi)$. We may instead choose a different spanning set of Eisenstein series for $\EE_{\kappa}(q,\chi)$; in place of the set of Eisenstein series $E_{\aa}(z,s,\chi)$ with $\aa$ a singular cusp, we may instead choose a spanning set of Eisenstein series of the form $E(z,s,f)$ with Fourier expansion
\[c_{1,f}(t) y^{1/2 + it} + c_{2,f}(t) y^{1/2 - it} + \sum_{\substack{n = -\infty \\ n \neq 0}}^{\infty} \rho_f(n,t,\chi) W_{\sgn(n) \frac{\kappa}{2}, it}(4\pi|n|y) e(nx)\]
for $s = 1/2 + it$ with $t \in \R \setminus \{0\}$, where $\BB(\chi_1,\chi_2) \ni f$ with $\chi_1 \chi_2 = \chi$ is some finite set depending on $\chi_1,\chi_2$ corresponding to an orthonormal basis in the space of the induced representation constructed out of the pair $(\chi_1,\chi_2)$; see \cite[Section 2.1.1]{BHM} or \cite[Chapter 5]{KL}. For our purposes, we need not be more specific about $\BB(\chi_1,\chi_2)$, other than noting that for each $f \in \BB(\chi_1,\chi_2)$, the Eisenstein series $E(z,1/2 + it,f)$ is an eigenfunction of the Hecke operators $T_n$ for $(n,q) = 1$ with Hecke eigenvalues
\[\lambda_f(n,t) = \sum_{ab = n} \chi_1(a) a^{it} \chi_2(b) b^{-it},\]
where for $g : \Hb \to \C$ a periodic function of period one,
\[(T_n g)(z) \defeq \frac{1}{\sqrt{n}} \sum_{ad = n} \chi(a) \sum_{b \hspace{-.25cm} \pmod{d}} g\left(\frac{az + b}{d}\right).\]
So for $f \in \BB(\chi_1,\chi_2)$,
\begin{align}
\lambda_f(m,t) \lambda_f(n,t) & = \sum_{d \mid (m,n)} \chi(d) \lambda_f\left(\frac{mn}{d^2},t\right),	\label{Eisensteinmult}\\
\overline{\lambda_f}(n,t) & = \overline{\chi}(n) \lambda_f(n,t),	\label{Eisensteinconj}\\
\rho_f(1,t) \lambda_f(n) & = \sqrt{n} \rho_f(n,t)	\label{Eisensteinrholambda}
\end{align}
whenever $m,n \geq 1$ with $(mn,q) = 1$ and $s = 1/2 + it$.

\begin{lemma}[{Cf.~\cite[Lemma 3]{CDF}, \cite[Lemma 2.8]{HM}, \cite[Section 6]{PY}}]
For any prime $p \nmid q$ and positive integer $\ell$, we have that
\begin{equation}\label{EisensteinHecke2ell}
\left|\lambda_f(p,t)\right|^{2\ell} = \sum_{j = 0}^{\ell} \alpha_{2j,2\ell} \overline{\chi}(p)^j \lambda_f\left(p^{2j},t\right)
\end{equation}
for any $f \in \BB(\chi_1,\chi_2)$ and $s = 1/2 + it$, where
\begin{equation}\label{Lobbnumber}
\alpha_{2j,2\ell} = \frac{2j + 1}{\ell + j + 1} \binom{2\ell}{\ell + j} = \begin{dcases*}
\binom{2\ell}{\ell - j} - \binom{2\ell}{\ell - j - 1} & if $0 \leq j \leq \ell - 1$,	\\
1 & if $j = \ell$,
\end{dcases*}
\end{equation}
so that each $\alpha_{2j,2\ell}$ is positive and satisfies
\begin{equation}\label{sumalphabound}
\sum_{j = 0}^{\ell} \alpha_{2j,2\ell} = \binom{2\ell}{\ell} \leq 2^{2\ell}.
\end{equation}
\end{lemma}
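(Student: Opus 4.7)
The plan is to work with the Satake parameters at $p$. Write $\alpha \defeq \chi_1(p) p^{it}$ and $\beta \defeq \chi_2(p) p^{-it}$, so that $\alpha \beta = \chi(p)$ and
\[\lambda_f(p^k, t) = \sum_{i + j = k} \alpha^i \beta^j = \frac{\alpha^{k+1} - \beta^{k+1}}{\alpha - \beta}.\]
The key preliminary observation is that $\overline{\chi_1}(p) = \overline{\chi}(p) \chi_2(p)$ (and symmetrically), so $\overline{\alpha} = \overline{\chi}(p) \beta$ and $\overline{\beta} = \overline{\chi}(p) \alpha$. This is just the $n = p$ case of \eqref{Eisensteinconj} unpacked. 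Consequently
\[\left|\lambda_f(p,t)\right|^{2\ell} = (\alpha + \beta)^{\ell} \overline{(\alpha + \beta)}^{\ell} = \overline{\chi}(p)^{\ell} (\alpha + \beta)^{2\ell},\]
reducing the problem to the expansion of the polynomial $(\alpha + \beta)^{2\ell}$ in terms of the symmetric polynomials $\lambda_f(p^{2j}, t)$.

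For this, first group conjugate binomial terms: by the binomial theorem and $\alpha\beta = \chi(p)$,
\[(\alpha + \beta)^{2\ell} = \binom{2\ell}{\ell} \chi(p)^{\ell} + \sum_{j = 1}^{\ell} \binom{2\ell}{\ell + j} \chi(p)^{\ell - j} \left(\alpha^{2j} + \beta^{2j}\right).\]
Next, apply the Hecke relation $\lambda_f(p, t) \lambda_f(p^{n-1}, t) = \lambda_f(p^n, t) + \chi(p) \lambda_f(p^{n-2}, t)$, which is just \eqref{Eisensteinmult} with $m = p$, to deduce
\[\alpha^n + \beta^n = \lambda_f(p^n, t) - \chi(p) \lambda_f(p^{n-2}, t) \qquad (n \geq 2).\]
Substituting this with $n = 2j$, reindexing the resulting sum with $j' = j - 1$, and collecting coefficients, the contribution of $\lambda_f(p^{2j}, t)$ for $0 \leq j \leq \ell - 1$ becomes $\bigl[\binom{2\ell}{\ell + j} - \binom{2\ell}{\ell + j + 1}\bigr] \chi(p)^{\ell - j}$, while for $j = \ell$ the coefficient is $\chi(p)^0 = 1$. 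Multiplying through by $\overline{\chi}(p)^{\ell}$ and using $\overline{\chi}(p)^{\ell} \chi(p)^{\ell - j} = \overline{\chi}(p)^{j}$ yields \eqref{EisensteinHecke2ell} with $\alpha_{2j, 2\ell}$ as in \eqref{Lobbnumber}; the second equality in \eqref{Lobbnumber} follows from symmetry $\binom{2\ell}{\ell + j} = \binom{2\ell}{\ell - j}$, and positivity from the standard unimodality of binomial coefficients.

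Finally, \eqref{sumalphabound} is a two-line telescoping check: $\sum_{j = 0}^{\ell - 1} \bigl[\binom{2\ell}{\ell - j} - \binom{2\ell}{\ell - j - 1}\bigr] = \binom{2\ell}{\ell} - \binom{2\ell}{0}$, and adding the $j = \ell$ term $\alpha_{2\ell, 2\ell} = 1$ gives $\binom{2\ell}{\ell}$; the trivial bound $\binom{2\ell}{\ell} \leq (1 + 1)^{2\ell}$ completes the estimate. There is no real obstacle here: the argument is the $\SL_2$ Clebsch--Gordan decomposition of the tensor power of the standard representation dressed up in Hecke-eigenvalue language, and the only minor subtlety is correctly tracking the factor $\overline{\chi}(p)^{\ell}$ produced at the outset so that the final twist comes out to $\overline{\chi}(p)^j$ rather than a mixed power of $\chi$ and $\overline{\chi}$.
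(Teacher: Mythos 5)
Your proof is correct, but it takes a genuinely different route from the paper's. The paper normalises the eigenvalues by a square root of the nebentypus, identifies $\overline{\chi}(p)^{j/2} \lambda_f(p^j,t)$ with the Chebyshev polynomial $U_j\bigl(\overline{\chi}(p)^{1/2}\lambda_f(p,t)/2\bigr)$ via the recurrence coming from \eqref{Eisensteinmult}, and then expands $x^{2\ell}$ in the basis $\{U_j\}$ using their orthogonality, quoting \cite[7.311.2]{GR} for the coefficient integral that produces \eqref{Lobbnumber}. You instead work directly with the parameters $\alpha = \chi_1(p)p^{it}$, $\beta = \chi_2(p)p^{-it}$, use \eqref{Eisensteinconj} in the form $\overline{\alpha} = \overline{\chi}(p)\beta$ (valid since $t$ is real and $p \nmid q$) to pull out the factor $\overline{\chi}(p)^{\ell}$, expand $(\alpha+\beta)^{2\ell}$ by the binomial theorem, and convert the power sums $\alpha^{2j}+\beta^{2j}$ back into $\lambda_f(p^{2j},t) - \chi(p)\lambda_f(p^{2j-2},t)$, which telescopes to give the coefficients as $\binom{2\ell}{\ell+j} - \binom{2\ell}{\ell+j+1}$; the identity $\alpha^n+\beta^n = \lambda_f(p^n,t)-\chi(p)\lambda_f(p^{n-2},t)$ is immediate from $\lambda_f(p^k,t) = \sum_{i+j=k}\alpha^i\beta^j$ (and so needs no $\alpha \neq \beta$ assumption), and the unused first equality in \eqref{Lobbnumber} is the elementary computation $\binom{2\ell}{\ell+j}-\binom{2\ell}{\ell+j+1} = \frac{2j+1}{\ell+j+1}\binom{2\ell}{\ell+j}$. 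What each approach buys: yours is entirely self-contained --- no table integral, no orthogonality of the $U_j$, and no choice of branch for $\overline{\chi}(p)^{1/2}$ --- and it delivers \eqref{sumalphabound} by the same telescoping; the paper's is shorter on the page and exhibits the $\alpha_{2j,2\ell}$ at once as the classical Chebyshev-expansion coefficients, which is the form in which they appear in the cited references. Both are, as you say, the Clebsch--Gordan decomposition in disguise, so there is no mathematical gap either way.
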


\begin{proof}
That \eqref{sumalphabound} follows from \eqref{Lobbnumber} is clear. For \eqref{EisensteinHecke2ell}, we have that
\[\overline{\chi}(p)^{j/2} \lambda_f\left(p^j,t\right) = U_j\left(\frac{\overline{\chi}(p)^{1/2} \lambda_f(p,t)}{2}\right),\]
where $U_j$ is the $j$-th Chebyshev polynomial of the second kind, because $U_j$ satisfies $U_0(x/2) = 1$, $U_1(x/2) = x$, and the recurrence relation
\[U_{j + 1}\left(\frac{x}{2}\right) = x U_j\left(\frac{x}{2}\right) - U_{j - 1}\left(\frac{x}{2}\right)\]
for all $j \geq 1$, and $\overline{\chi}(p)^{j/2} \lambda_f\left(p^j,t\right)$ satisfies the same recurrence relation from \eqref{Eisensteinmult}. Since
\[\frac{2}{\pi} \int_{-1}^{1} U_j(x) U_k(x) \sqrt{1 - x^2} \, dx = \delta_{j,k},\]
we have that
\[x^{2\ell} = \sum_{j = 0}^{2\ell} \alpha_{j,2\ell} U_j\left(\frac{x}{2}\right),\]
where
\[\alpha_{j,2\ell} = \frac{2^{2\ell + 1}}{\pi} \int_{-1}^{1} x^{2\ell} U_j(x) \sqrt{1 - x^2} \, dx.\]
This vanishes if $j$ is odd as $U_j(-x) = (-1)^j U_j(x)$, while for $j$ even we have the identity \eqref{Lobbnumber} from \cite[7.311.2]{GR}. Combined with \eqref{Eisensteinconj}, this proves \eqref{EisensteinHecke2ell}.
\end{proof}

\subsection{Atkin--Lehner Decomposition for \texorpdfstring{$\Gamma_0(q)$}{\textGamma\9040\200(q)}}

Similarly, we may choose a basis of $\AA_{\kappa}(q,\chi)$ consisting of linear combinations of Hecke eigenforms. Let $\BB_{\kappa}^{\ast}(q,\chi)$ denote the set of newforms of weight $\kappa$, level $q$, and nebentypus $\chi$, and let $\AA_{\kappa}^{\ast}(q,\chi)$ denote the subspace of $\AA_{\kappa}(q,\chi)$ spanned by such newforms. Recall that a newform $f \in \BB_{\kappa}^{\ast}(q,\chi)$ is an eigenfunction of the weight $\kappa$ Laplacian $\Delta_{\kappa}$ with eigenvalue $1/4 + t_f^2$ and of every Hecke operator $T_n$, $n \geq 1$, with eigenvalue $\lambda_f(n)$, as well as the operator $Q_{1/2 + it_f,\kappa}$ as defined in \cite[Section 4]{DFI}, with eigenvalue $\epsilon_f \in \{-1,1\}$; we say that $f$ is even if $\epsilon_f = 1$ and $f$ is odd if $\epsilon_f = -1$. In particular,
\begin{align}
\lambda_f(m) \lambda_f(n) & = \sum_{\substack{d \mid (m,n) \\ (d,q) = 1}} \chi(d) \lambda_f\left(\frac{mn}{d^2}\right),	\label{cuspmult}\\
\rho_f(1) \lambda_f(n) & = \sqrt{n} \rho_f(n) \label{cusprholambda}
\end{align}
whenever $m,n \geq 1$, and
\begin{equation}\label{cuspconj}
\overline{\lambda_f}(n) = \overline{\chi}(n) \lambda_f(n)
\end{equation}
for $n \geq 1$ with $(n,q) = 1$. Using \eqref{cuspmult} and \eqref{cuspconj}, we have the following.

\begin{lemma}
For any prime $p \nmid q$ and positive integer $\ell$, we have that
\begin{equation}\label{cuspHecke2ell}
\left|\lambda_f(p)\right|^{2\ell} = \sum_{j = 0}^{\ell} \alpha_{2j,2\ell} \overline{\chi}(p)^j \lambda_f\left(p^{2j}\right)
\end{equation}
for any $f \in \BB_{\kappa}^{\ast}(q,\chi)$, where once again $\alpha_{2j,2\ell}$ is given by \eqref{Lobbnumber}.
\end{lemma}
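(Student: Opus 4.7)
The plan is to mirror the proof of the analogous statement \eqref{EisensteinHecke2ell} for Eisenstein series, since the multiplicativity relation \eqref{cuspmult} takes the same form as \eqref{Eisensteinmult} once we restrict to powers of a prime $p \nmid q$: the coprimality condition $(d,q) = 1$ in \eqref{cuspmult} is automatically satisfied when $d \mid p^a$, and so for every $a, b \geq 0$ we have
\[\lambda_f(p^a) \lambda_f(p^b) = \sum_{j = 0}^{\min(a,b)} \chi(p)^j \lambda_f(p^{a + b - 2j}).\]
In particular, the sequence $\overline{\chi}(p)^{j/2} \lambda_f(p^j)$ satisfies the Chebyshev recurrence $y_{j+1} = x y_j - y_{j - 1}$ with $y_0 = 1$, $y_1 = x$, where $x = \overline{\chi}(p)^{1/2} \lambda_f(p)$, and hence
\[\overline{\chi}(p)^{j/2} \lambda_f(p^j) = U_j\!\left(\frac{\overline{\chi}(p)^{1/2} \lambda_f(p)}{2}\right).\]

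Next I would expand $x^{2\ell}$ in the basis $\{U_j(x/2)\}_{j \geq 0}$ using orthogonality of Chebyshev polynomials of the second kind against the weight $\sqrt{1 - x^2}$ on $[-1,1]$, exactly as in the Eisenstein case. The odd-index coefficients vanish by the parity $U_j(-x) = (-1)^j U_j(x)$, and the even-index coefficients are precisely $\alpha_{2j, 2\ell}$ as given by \eqref{Lobbnumber} via \cite[7.311.2]{GR}. Substituting $x = \overline{\chi}(p)^{1/2} \lambda_f(p)$ yields
\[\overline{\chi}(p)^{\ell} \lambda_f(p)^{2\ell} = \sum_{j = 0}^{\ell} \alpha_{2j, 2\ell} \overline{\chi}(p)^{j} \lambda_f(p^{2j}).\]

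Finally, I would convert the left-hand side into $|\lambda_f(p)|^{2\ell}$ using the conjugation identity \eqref{cuspconj}, which since $p \nmid q$ gives $\overline{\lambda_f(p)} = \overline{\chi}(p) \lambda_f(p)$, and hence
\[|\lambda_f(p)|^{2\ell} = \bigl(\lambda_f(p) \overline{\lambda_f(p)}\bigr)^{\ell} = \overline{\chi}(p)^{\ell} \lambda_f(p)^{2\ell}.\]
Combining the two displays gives \eqref{cuspHecke2ell}.

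There is no real obstacle here; the argument is a direct transcription of the proof of \eqref{EisensteinHecke2ell}. The only point requiring care is confirming that the coprimality constraint $(d,q) = 1$ present in \eqref{cuspmult} but absent from \eqref{Eisensteinmult} does not alter the recurrence, which is automatic under the hypothesis $p \nmid q$; and confirming that \eqref{cuspconj} is applicable, which again holds since $p \nmid q$.
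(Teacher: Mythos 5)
Your proof is correct and is exactly the argument the paper intends: the paper states this lemma with no separate proof, asserting that it follows from \eqref{cuspmult} and \eqref{cuspconj} by the same Chebyshev-polynomial expansion used for \eqref{EisensteinHecke2ell}, which is precisely what you carry out, including the observation that the coprimality condition in \eqref{cuspmult} is vacuous for $d \mid p^{\min(a,b)}$ when $p \nmid q$.
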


The Atkin--Lehner decomposition states that
\[\AA_{\kappa}(q,\chi) = \bigoplus_{\substack{q_1 q_2 = q \\ q_1 \equiv 0 \hspace{-.25cm} \pmod{q_{\chi}}}} \bigoplus_{f \in \BB_{\kappa}^{\ast}(q_1,\chi)} \bigoplus_{d \mid q_2} \C \cdot \iota_{d,q_1,q} f,\]
where $\iota_{d,q_1,q} : \AA_{\kappa}\left(q_1,\chi\right) \to \AA_{\kappa}(q,\chi)$ is the map $\iota_{d,q_1,q} f(z) = f(dz)$. The map $\iota_{d,q_1,q}$ commutes with the weight $k$ Laplacian $\Delta_{\kappa}$ and the Hecke operators $T_n$ whenever $n \geq 1$ and $(n,q) = 1$. It follows that if $g = \iota_{d,q_1,q} f$ for some $f \in \BB_{\kappa}^{\ast}\left(q_1,\chi\right)$, then $t_g = t_f$ and $\lambda_g(n) = \lambda_f(n)$ whenever $n \geq 1$ and $(n,q) = 1$. Note, however, that $\rho_g(1) = 0$ unless $d = 1$, in which case $\rho_g(1) = \rho_f(1)$.

Unfortunately, the inner Atkin--Lehner decomposition
\[\bigoplus_{d \mid q_2} \C \cdot \iota_{d,q_1,q} f\]
is not an orthogonal decomposition. Nonetheless, one may make use of this decomposition in determining an orthonormal basis of $\AA_{\kappa}(q,\chi)$. For squarefree $q$ and principal nebentypus, this is a result of Iwaniec, Luo, and Sarnak \cite[Lemma 2.4]{ILS}, while Blomer and Mili\'{c}evi\'{c} have generalised this to nonsquarefree $q$ \cite[Lemma 9]{BlMil}. Here we generalise this further to nonprincipal nebentypus; this has also independently been derived by Schulze-Pillot and Yenirce \cite{S-PY} via a different method.

\begin{lemma}[{Cf.~\cite[Lemma 2.4]{ILS}, \cite[Lemma 9]{BlMil}}]\label{innerproductlemma}
Suppose that $\chi$ has conductor $q_{\chi} \mid q$, and suppose that $q_1 q_2 = q$ with $q_1 \equiv 0 \pmod{q_{\chi}}$. For $f \in \BB_{\kappa}^{\ast}(q_1,\chi)$ and $\ell_1, \ell_2 \mid q_2$, we have that
\[\frac{\left\langle \iota_{\ell_1,q_1,q} f, \iota_{\ell_2,q_1,q} f\right\rangle_q}{\langle \iota_{1,q_1,q} f, \iota_{1,q_1,q} f\rangle_q} = A_f\left(\frac{\ell_2}{(\ell_1,\ell_2)}\right) \overline{A_f}\left(\frac{\ell_1}{(\ell_1,\ell_2)}\right),\]
where $A_f(n)$ is the multiplicative function defined on prime powers by
\[A_f(p^t) = \begin{dcases*}
\frac{\lambda_f(p)}{\sqrt{p}(1 + \chi_{0(q_1)}(p) p^{-1})} & if $t = 1$,	\\
\frac{\lambda_f(p^t) - \chi_{(q_1)}(p) \lambda_f(p^{t - 2}) p^{-1}}{p^{t/2} (1 + \chi_{0(q_1)}(p) p^{-1})} & if $t \geq 2$,
\end{dcases*}\]
where $\chi_{0(q_1)}$ denotes the principal character modulo $q_1$ and $\chi_{(q_1)} \defeq \chi \chi_{0(q_1)}$ denotes the Dirichlet character modulo $q_1$ induced from $\chi$.
\end{lemma}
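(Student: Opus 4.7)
The plan is to apply the Rankin--Selberg unfolding method to relate both inner products to residues at $s = 1$ of an explicit Dirichlet series in the Hecke eigenvalues of $f$, then reduce their ratio to a finite product of local Euler factors. Let $E_\infty(z,s)$ denote the Eisenstein series at the cusp $\infty$ of $\Gamma_0(q) \backslash \Hb$ with trivial nebentypus, and consider
\[I(s; \ell_1, \ell_2) \defeq \int_{\Gamma_0(q) \backslash \Hb} f(\ell_1 z) \overline{f(\ell_2 z)} E_\infty(z,s) \, d\mu(z),\]
so that $\Res_{s = 1} I(s; \ell_1, \ell_2) = c_q \langle \iota_{\ell_1,q_1,q} f, \iota_{\ell_2,q_1,q} f \rangle_q$ for a constant $c_q$ depending only on $q$. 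Unfolding collapses $I(s; \ell_1, \ell_2)$ to $\int_0^\infty \int_0^1 f(\ell_1 z) \overline{f(\ell_2 z)} y^{s-2} \, dx\, dy$; substituting the Fourier expansions of $f(\ell_i z)$ and integrating in $x$, only terms $(n_1, n_2)$ with $n_1 \ell_1 = n_2 \ell_2$ contribute. Writing $d = (\ell_1, \ell_2)$ and $\ell_i = d m_i$ with $(m_1, m_2) = 1$, these are parametrized by $k \neq 0$ via $(n_1, n_2) = (m_2 k, m_1 k)$. Applying \eqref{cusprholambda} and performing the Mellin integral in $y$ yields
\[I(s; \ell_1, \ell_2) = \frac{|\rho_f(1)|^2\, G_\kappa(s, t_f)\, (4\pi m_1 m_2 d)^{1 - s}}{\sqrt{m_1 m_2}}\, D_f(m_1, m_2; s),\]
where $G_\kappa(s, t_f)$ is a gamma factor depending only on $s, t_f, \kappa$ and
\[D_f(m_1, m_2; s) \defeq \sum_{k \geq 1} \frac{\lambda_f(m_2 k) \overline{\lambda_f}(m_1 k)}{k^s}\]
(the $k < 0$ contribution being identical up to a sign from $\epsilon_f$).

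Taking the ratio of residues at $s = 1$ cancels $c_q$, $|\rho_f(1)|^2$, $G_\kappa(1, t_f)$, and the value $(4\pi m_1 m_2 d)^{0} = 1$, giving
\[\frac{\langle \iota_{\ell_1,q_1,q} f, \iota_{\ell_2,q_1,q} f \rangle_q}{\langle \iota_{1,q_1,q} f, \iota_{1,q_1,q} f \rangle_q} = \frac{1}{\sqrt{m_1 m_2}} \lim_{s \to 1} \frac{D_f(m_1, m_2; s)}{D_f(1, 1; s)}.\]
By the Hecke multiplicativity \eqref{cuspmult}, $D_f(m_1, m_2; s)$ has an Euler product whose local factors at primes $p \nmid m_1 m_2$ coincide with those of $D_f(1, 1; s)$, so the limit is a finite product of local ratios over primes $p \mid m_1 m_2$. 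Since $(m_1, m_2) = 1$, at each such $p$ exactly one of $v_p(m_1), v_p(m_2)$ is positive; using the Satake parameterization $\lambda_f(p^n) = \sum_{j = 0}^{n} \alpha_p^j \beta_p^{n - j}$ with $\alpha_p + \beta_p = \lambda_f(p)$ and $\alpha_p \beta_p = \chi_{(q_1)}(p)$, an explicit computation shows that the local ratio equals $p^{v_p(m_1 m_2)/2} A_f(p^{v_p(m_2)}) \overline{A_f(p^{v_p(m_1)})}$. Multiplying across primes, the $p^{v_p(\cdot)/2}$ factors combine to $\sqrt{m_1 m_2}$ and exactly cancel the prefactor, leaving $A_f(m_2) \overline{A_f}(m_1)$ as required.

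The main obstacle is this local closed-form computation. One must sum the shifted series $\sum_{k \geq 0} \lambda_f(p^{t + k}) \overline{\lambda_f(p^k)} p^{-ks}$ as a rational function in $p^{-s}$, divide by the $t = 0$ analogue, and specialize at $s = 1$. The case distinction in $A_f(p^t)$ between $p \nmid q_1$ and $p \mid q_1$ reflects the degenerate Satake parameter $\beta_p = 0$ in the latter setting (so that $\chi_{(q_1)}(p) = 0$ and $\lambda_f(p^n) = \alpha_p^n$), which collapses the Hecke recursion and removes the correction term $\chi_{(q_1)}(p) \lambda_f(p^{t - 2}) p^{-1}$ in the numerator of $A_f(p^t)$; the denominator factor $1 + \chi_{0(q_1)}(p) p^{-1}$ emerges from specializing the local Rankin--Selberg denominator at $s = 1$. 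Carefully tracking the many normalizing constants between numerator and denominator is the bookkeeping-heavy core of the argument.
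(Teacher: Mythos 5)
Your proposal is correct and follows essentially the same route as the paper's proof: unfold $f(\ell_1 z)\overline{f(\ell_2 z)}$ against the level-$q$ Eisenstein series with trivial nebentypus, extract the Dirichlet series supported on $n_1\ell_1 = n_2\ell_2$, use Hecke multiplicativity to factor it into local shifted series, and take the ratio of residues at $s = 1$, which reduces to exactly the paper's local factors $B_f(p^t;s)$ with $A_f(n) = n^{-1/2}B_f(n;1)$. One small imprecision: the negative-frequency terms contribute the same Dirichlet series multiplied by an $\ell$-independent archimedean factor (a squared ratio of gamma factors together with the $W_{-\kappa/2}$ Whittaker integral), not merely a sign from $\epsilon_f$ — but since that factor is common to numerator and denominator it cancels in the ratio and does not affect the argument.
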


\begin{proof}
For $\Re(s) > 1$, consider the integral
\[F(s) \defeq \int_{\Gamma_0(q) \backslash \Hb} f(\ell_1 z) \overline{f}(\ell_2 z) E(z,s) \, d\mu(z),\]
where
\[E(z,s) \defeq \sum_{\gamma \in \Gamma_{\infty} \backslash \Gamma_0(q)} \Im(\gamma z)^s.\]
Unfolding the integral and using Parseval's identity,
\[F(s) = \int_{0}^{\infty} y^{s - 1} \underset{\ell_1 n_1 = \ell_2 n_2}{\sum_{\substack{n_1 = -\infty \\ n_1 \neq 0}}^{\infty} \sum_{\substack{n_2 = -\infty \\ n_2 \neq 0}}^{\infty}} \rho_f(n_1) \overline{\rho_f}(n_2) W_{\sgn(n_1) \frac{\kappa}{2}, it_f}(4\pi \ell_1 |n_1|y)^2 \, \frac{dy}{y}.\]
From \eqref{cusprholambda} and the fact from \cite[Equation (4.70)]{DFI} that
\[\rho_f(-n) = \epsilon_f \frac{\Gamma\left(\frac{1 + \kappa}{2} + it_f\right)}{\Gamma\left(\frac{1 - \kappa}{2} + it_f\right)} \rho_f(n)\]
for $n \geq 1$, where $\epsilon_f \in \{-1,1\}$, we find that
\begin{multline*}
F(s) = \frac{|\rho_f(1)|^2}{(4\pi [\ell_1, \ell_2])^{s - 1} \sqrt{\ell' \ell''}} \sum_{n = 1}^{\infty} \frac{\lambda_f(\ell'' n) \overline{\lambda_f}(\ell' n)}{n^s}	\\
\times \int_{0}^{\infty} y^{s - 1} \left(W_{\frac{\kappa}{2}, it_f}(y)^2 + \left|\frac{\Gamma\left(\frac{1 + \kappa}{2} + it_f\right)}{\Gamma\left(\frac{1 - \kappa}{2} + it_f\right)}\right|^2 W_{-\frac{\kappa}{2}, it_f}(y)^2\right) \, \frac{dy}{y},
\end{multline*}
where we have written $n_1 = \ell'' n$, $n_2 = \ell' n$, with $\ell' = \ell_1/(\ell_1,\ell_2)$ and $\ell'' = \ell_2/(\ell_1,\ell_2)$.

Next, by the multiplicativity of the Hecke eigenvalues of $f$ together with the fact that $(\ell',\ell'') = 1$, the sum over $n$ is equal to
\[\sum_{\substack{n = 1 \\ (n,\ell' \ell'') = 1}}^{\infty} \frac{|\lambda_f(n)|^2}{n^s} \prod_{p^t \parallel \ell''} \sum_{r = 0}^{\infty} \frac{\lambda_f(p^{r + t}) \overline{\lambda_f}(p^r)}{p^{rs}} \prod_{p^t \parallel \ell'} \sum_{r = 0}^{\infty} \frac{\lambda_f(p^r) \overline{\lambda_f}(p^{r + t})}{p^{rs}}.\]
From \eqref{cuspmult} and \eqref{cuspconj}, we find that
\begin{align*}
\sum_{r = 0}^{\infty} \frac{\lambda_f(p^{r + t}) \overline{\lambda_f}(p^r)}{p^{rs}} & = B_f(p^t;s) \sum_{r = 0}^{\infty} \frac{|\lambda_f(p^r)|^2}{p^{rs}},	\\
\sum_{r = 0}^{\infty} \frac{\lambda_f(p^r) \overline{\lambda_f}(p^{r + t})}{p^{rs}} & = \overline{B_f}(p^t;\overline{s}) \sum_{r = 0}^{\infty} \frac{|\lambda_f(p^r)|^2}{p^{rs}},
\end{align*}
where $B_f(n;s)$ is defined to be the multiplicative function
\[B_f(p^t;s) = \begin{dcases*}
\frac{\lambda_f(p)}{1 + \chi_{0(q_1)}(p) p^{-s}} & if $t = 1$,	\\
\frac{\lambda_f(p^t) - \chi_{(q_1)}(p) \lambda_f(p^{t - 2}) p^{-s}}{1 + \chi_{0(q_1)}(p) p^{-s}} & if $t \geq 2$,
\end{dcases*}\]
so that $A_f(n) = n^{-1/2} B_f(n;1)$. We surmise that $F(s)$ is equal to
\begin{multline}\label{F(s)}
\frac{|\rho_f(1)|^2}{(4\pi [\ell_1, \ell_2])^{s - 1} \sqrt{\ell' \ell''}} B_f(\ell'';s) \overline{B_f}(\ell';\overline{s}) \sum_{n = 1}^{\infty} \frac{|\lambda_f(n)|^2}{n^s}	\\
\times \int_{0}^{\infty} y^{s - 1} \left(W_{\frac{\kappa}{2}, it_f}(y)^2 + \left|\frac{\Gamma\left(\frac{1 + \kappa}{2} + it_f\right)}{\Gamma\left(\frac{1 - \kappa}{2} + it_f\right)}\right|^2 W_{-\frac{\kappa}{2}, it_f}(y)^2\right) \, \frac{dy}{y}.
\end{multline}
The result follows by taking the residue at $s = 1$, noting that $E(z,s)$ has residue equal to $1/\vol(\Gamma_0(q) \backslash \Hb)$ at $s = 1$ independently of $z \in \Gamma_0(q) \backslash \Hb$, and comparing to the case $\ell_1 = \ell_2 = 1$.
\end{proof}

\begin{lemma}[{Cf.~\cite[Lemma 9]{BlMil}}]
An orthonormal basis of $\AA_{\kappa}(q,\chi)$ is given by
\begin{equation}\label{basisqchi}
\BB_{\kappa}(q,\chi) = \bigsqcup_{\substack{q_1 q_2 = q \\ q_1 \equiv 0 \hspace{-.25cm} \pmod{q_{\chi}}}} \bigsqcup_{f \in \BB_{\kappa}^{\ast}(q_1,\chi)} \bigsqcup_{d \mid q_2} \left\{f_d = \sum_{\ell \mid d} \xi_f(\ell,d) \iota_{\ell,q_1,q} f\right\},
\end{equation}
where each $f \in \BB_{\kappa}^{\ast}\left(q_1,\chi\right)$ is normalised such that $\langle \iota_{1,q_1,q} f, \iota_{1,q_1,q} f \rangle_q = 1$, and the function $\xi_f(\ell,d)$ is jointly multiplicative. For $0 \leq r \leq t$, $\xi_f(p^r,p^t)$ is equal to
\[\begin{dcases*}
1 & if $r = t = 0$,	\\
-\frac{\overline{A_f}(p)}{\sqrt{1 - |A_f(p)|^2}} & if $r = 0$ and $t = 1$,	\\
\frac{1}{\sqrt{1 - |A_f(p)|^2}} & if $r = t = 1$,	\\
\frac{\overline{\chi}_{(q_1)}(p)}{p} \frac{1}{\sqrt{(1 - \chi_{0(q_1)}(p) p^{-2}) (1 - |A_f(p)|^2)}} & if $r = t - 2$ and $t \geq 2$,	\\
-\frac{\overline{\lambda_f}(p)}{\sqrt{p}} \frac{1}{\sqrt{(1 - \chi_{0(q_1)}(p) p^{-2}) (1 - |A_f(p)|^2)}} & if $r = t - 1$ and $t \geq 2$,	\\
\frac{1}{\sqrt{(1 - \chi_{0(q_1)}(p) p^{-2}) (1 - |A_f(p)|^2)}} & if $r = t$ and $t \geq 2$,	\\
0 & if $0 \leq r \leq t - 3$ and $t \geq 3$.
\end{dcases*}\]
\end{lemma}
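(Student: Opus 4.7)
The plan is to combine Lemma~\ref{innerproductlemma} with Gram--Schmidt orthonormalisation, leveraging multiplicativity to reduce to a local calculation at each prime. I would first recall the standard fact from Atkin--Lehner theory that the subspaces spanned by $\{\iota_{d,q_1,q} f : d \mid q_2\}$ for different newforms $f$ (or different levels $q_1$) are mutually orthogonal in $\AA_{\kappa}(q,\chi)$; this reduces the problem to constructing an orthonormal basis of each single-newform subspace $V_f \defeq \bigoplus_{d \mid q_2} \C \cdot \iota_{d,q_1,q} f$ for $f \in \BB_{\kappa}^{\ast}(q_1,\chi)$, normalised so that $\langle \iota_{1,q_1,q} f, \iota_{1,q_1,q} f\rangle_q = 1$.

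By Lemma~\ref{innerproductlemma}, the Gram matrix of $\{\iota_{d,q_1,q} f : d \mid q_2\}$ is multiplicative in the index pair, so the Gram--Schmidt procedure factors as a tensor product over primes $p \mid q_2$. This factorisation immediately yields the joint multiplicativity of $\xi_f(\ell,d)$ and reduces the remaining work to a local problem at each prime power $p^{t_0} \parallel q_2$: orthonormalise the vectors $v_t \defeq \iota_{p^t,q_1,q} f$ for $0 \leq t \leq t_0$ against the Gram matrix with entries $\langle v_a, v_b\rangle_q = A_f(p^{b-a})$ for $a \leq b$.

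I would proceed by induction on $t$. The cases $t = 0, 1$ reduce to a single Gram--Schmidt step on $(v_0, v_1)$ and produce the claimed coefficients directly. For $t \geq 2$, I take the ansatz
\[f_{p^t} = c_{-2} v_{t-2} + c_{-1} v_{t-1} + c_0 v_t\]
with no support on $v_0, \ldots, v_{t-3}$. The orthogonality conditions $\langle f_{p^t}, v_s\rangle_q = 0$ for $0 \leq s \leq t-3$ all take the identical form
\[c_{-2} \overline{A_f(p^{j-2})} + c_{-1} \overline{A_f(p^{j-1})} + c_0 \overline{A_f(p^j)} = 0, \qquad j = t - s \geq 3.\]
Substituting the proposed values of $c_{-2}, c_{-1}, c_0$ and conjugating, this collapses to the three-term recurrence
\[A_f(p^j) = \frac{\lambda_f(p)}{\sqrt p}\, A_f(p^{j-1}) - \frac{\chi_{(q_1)}(p)}{p}\, A_f(p^{j-2}), \qquad j \geq 2,\]
which I would verify directly from the definition of $A_f$ using the Hecke recurrence $\lambda_f(p^j) = \lambda_f(p) \lambda_f(p^{j-1}) - \chi_{(q_1)}(p) \lambda_f(p^{j-2})$ (a consequence of \eqref{cuspmult}; the factor $\chi_{(q_1)}(p) = 0$ automatically handles the ramified case $p \mid q_1$). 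The remaining two orthogonality conditions at $s = t - 2, t - 1$ turn out to reduce to the same recurrence specialised at $j = 2$, so the ratios of the $c_i$ are pinned down uniquely.

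The main obstacle is the normalisation step: computing $\|f_{p^t}\|_q^2$ produces a linear combination involving $|c_i|^2$, $A_f(p)$, and $A_f(p^2)$, and one must simplify it using the same recurrence together with the conjugation rule $\overline{\lambda_f(p)} = \overline{\chi}(p) \lambda_f(p)$ from \eqref{cuspconj} to recover the clean form $(1 - \chi_{0(q_1)}(p) p^{-2})(1 - |A_f(p)|^2)$. Both the unramified case $p \nmid q_1$ and the ramified case $p \mid q_1$ (where $\chi_{0(q_1)}(p) = 0$ and the recurrence degenerates) must be checked, though the latter is essentially trivial once $\chi_{(q_1)}(p) = 0$ is used.
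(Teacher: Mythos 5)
Your proposal is correct and is essentially the paper's argument: the paper simply omits the details and refers to the proof of \cite[Lemma 9]{BlMil}, which is exactly this Gram--Schmidt orthonormalisation against the Gram matrix from Lemma~\ref{innerproductlemma}, factored over primes dividing $q_2$ and verified via the three-term recurrence for $A_f$. One small imprecision: the condition at $s = t-1$ does not literally reduce to the recurrence at $j = 2$ but rather to the definition of $A_f(p)$ combined with the conjugation relation \eqref{cuspconj}, which is exactly the trivial check you anticipate in your normalisation step.
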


The key point is that the coefficients $\xi_f(\ell,d)$ are chosen such that the ratio of inner products
\[\delta_f(d_1,d_2) \defeq \frac{\langle f_{d_1}, f_{d_2}\rangle_q}{\langle \iota_{1,q_1,q} f, \iota_{1,q_1,q} f\rangle_q} = \sum_{\ell_1 \mid d_1} \sum_{\ell_2 \mid d_2} \xi_f(\ell_1,d_1) \overline{\xi_f}(\ell_2,d_2) \frac{\left\langle \iota_{\ell_1,q_1,q} f, \iota_{\ell_2,q_1,q} f\right\rangle_q}{\langle \iota_{1,q_1,q} f, \iota_{1,q_1,q} f\rangle_q}\]
is equal to $1$ if $d_1 = d_2$ and $0$ otherwise.

\begin{proof}
The proof follows the same lines as \cite[Proof of Lemma 9]{BlMil}; we omit the details.
\end{proof}

\subsection{Explicit Kuznetsov Formula}

We may use the explicit basis \eqref{basisqchi} together with \eqref{cuspconj} and \eqref{cusprholambda} to rewrite the discrete part of the Kuznetsov formula, noting that for $f \in \BB_{\kappa}^{\ast}(q_1,\chi)$, $d \mid q_2$, and $n \geq 1$ coprime to $q$,
\[\rho_{f_d}(n) = \xi_f(1,d) \rho_f(1) \frac{\lambda_f(n)}{\sqrt{n}}.\]
Similarly, the continuous part can be rewritten in terms of the Eisenstein spanning set $\BB(\chi_1,\chi_2)$ with $\chi_1 \chi_2 = \chi$ together with \eqref{Eisensteinconj} and \eqref{Eisensteinrholambda}. This yields the following explicit versions of the pre-Kuznetsov and Kuznetsov formul\ae{}.

\begin{proposition}
When $m,n \geq 1$ with $(mn,q) = 1$, the pre-Kuznetsov formula has the form
\begin{multline}\label{pre-Kuznetsov}
\sum_{\substack{q_1 q_2 = q \\ q_1 \equiv 0 \hspace{-.25cm} \pmod{q_{\chi}}}} \sum_{f \in \BB_{\kappa}^{\ast}\left(q_1,\chi\right)} 4 \pi \xi_f \left|\rho_f(1)\right|^2 \frac{\overline{\chi}(m) \lambda_f(m) \lambda_f(n)}{\cosh \pi(r - t_f) \cosh \pi (r + t_f)}	\\
+ \sum_{\substack{\chi_1,\chi_2 \hspace{-.25cm} \pmod{q} \\ \chi_1 \chi_2 = \chi}} \sum_{f \in \BB(\chi_1,\chi_2)} \int_{-\infty}^{\infty} \left|\rho_f(1,t)\right|^2 \frac{\overline{\chi}(m) \lambda_f(m,t) \lambda_f(n,t)}{\cosh \pi(r - t) \cosh \pi (r + t)} \, dt	\\
= \frac{\left|\Gamma\left(1 - \frac{\kappa}{2} - ir\right)\right|^2}{\pi^2} \left(\delta_{mn} + \sum_{\substack{c = 1 \\ c \equiv 0 \hspace{-.25cm} \pmod{q}}}^{\infty} \frac{S_{\chi}(m,n;c)}{c} I_{\kappa}\left(\frac{4\pi \sqrt{mn}}{c},r\right)\right)
\end{multline}
for $\kappa \in \{0,1\}$, where we define
\[\xi_f \defeq \sum_{d \mid q_2} \left|\xi_f(1,d)\right|^2,\]
while the Kuznetsov formula for $\kappa = 0$ has the form
\begin{multline}\label{Kuznetsov}
\sum_{\substack{q_1 q_2 = q \\ q_1 \equiv 0 \hspace{-.25cm} \pmod{q_{\chi}}}} \sum_{f \in \BB_0^{\ast}\left(q_1,\chi\right)} \frac{4\pi \xi_f \left|\rho_f(1)\right|^2}{\cosh \pi t_f} \overline{\chi}(m) \lambda_f(m) \lambda_f(n) h(t_f)	\\
+ \sum_{\substack{\chi_1,\chi_2 \hspace{-.25cm} \pmod{q} \\ \chi_1 \chi_2 = \chi}} \sum_{f \in \BB(\chi_1,\chi_2)} \int_{-\infty}^{\infty} \frac{\left|\rho_f(1,t)\right|^2}{\cosh \pi t} \overline{\chi}(m) \lambda_f(m,t) \lambda_f(n,t) h(t) \, dt	\\
= \delta_{mn} g_0 + \sum_{\substack{c = 1 \\ c \equiv 0 \hspace{-.25cm} \pmod{q}}}^{\infty} \frac{S_{\chi}(m,n;c)}{c} g_0\left(\frac{4\pi \sqrt{mn}}{c}\right).
\end{multline}
In both formul\ae{}, each $f \in \BB_{\kappa}^{\ast}\left(q_1,\chi\right)$ is normalised such that $\langle \iota_{1,q_1,q} f, \iota_{1,q_1,q} f \rangle_q = 1$.
\end{proposition}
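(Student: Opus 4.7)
The plan is to substitute the explicit orthonormal basis \eqref{basisqchi} into the spectral side of the original pre-Kuznetsov and Kuznetsov theorems stated earlier, together with the analogous spanning-set decomposition on the Eisenstein side, and then use the Hecke relations \eqref{cuspmult}--\eqref{cuspconj} and \eqref{Eisensteinmult}--\eqref{Eisensteinrholambda} to put the geometric side in the stated shape. The geometric side (delta term and Kloosterman term) is already in the desired form, so the entire argument is on the spectral side.

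First I would compute $\rho_{f_d}(n)$ for the basis element $f_d = \sum_{\ell \mid d} \xi_f(\ell,d) \iota_{\ell,q_1,q} f$ when $(n,q)=1$. Since $\iota_{\ell,q_1,q} f(z) = f(\ell z)$ has Fourier coefficient $\rho_f(n/\ell)$ at frequency $n$ when $\ell \mid n$ (and $0$ otherwise), and since $d \mid q_2 \mid q$ forces $\gcd(d,n)=1$, only the term $\ell = 1$ survives. Thus $\rho_{f_d}(n) = \xi_f(1,d) \rho_f(n)$, and \eqref{cusprholambda} gives
\[\rho_{f_d}(n) = \xi_f(1,d) \rho_f(1) \frac{\lambda_f(n)}{\sqrt{n}}.\]
Forming the product $4\pi\sqrt{mn}\,\overline{\rho_{f_d}}(m)\rho_{f_d}(n)$ and applying \eqref{cuspconj} yields
\[4\pi \left|\xi_f(1,d)\right|^2 \left|\rho_f(1)\right|^2 \overline{\chi}(m) \lambda_f(m) \lambda_f(n).\]
The key step is then to interchange the order of summation: the basis $\BB_{\kappa}(q,\chi)$ is indexed by the triples $(q_1,q_2,f,d)$ with $q_1 q_2 = q$, $q_{\chi} \mid q_1$, $f \in \BB_{\kappa}^{\ast}(q_1,\chi)$, and $d \mid q_2$, so the inner sum over $d$ collapses to $\sum_{d \mid q_2} |\xi_f(1,d)|^2 = \xi_f$ by the definition given in the proposition statement, producing the cuspidal term in \eqref{pre-Kuznetsov} and \eqref{Kuznetsov}.

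For the continuous spectrum, I would argue identically, but replacing the cusp-indexed Eisenstein basis with the spanning set $\BB(\chi_1,\chi_2)$ described earlier in the section. The relation \eqref{Eisensteinrholambda} plays the role of \eqref{cusprholambda}, giving $\rho_f(n,t) = \rho_f(1,t) \lambda_f(n,t)/\sqrt{n}$ when $(n,q)=1$, and \eqref{Eisensteinconj} plays the role of \eqref{cuspconj}, producing the factor $\overline{\chi}(m) \lambda_f(m,t) \lambda_f(n,t)$. One technical remark is that the original Kuznetsov formula is stated with an orthonormal basis on the discrete side and an $\aa$-indexed orthogonal decomposition on the continuous side, whereas $\BB(\chi_1,\chi_2)$ is only a spanning set; however, the change of basis from $\{E_\aa(\cdot,s,\chi)\}$ to $\{E(\cdot,s,f) : f \in \BB(\chi_1,\chi_2),\ \chi_1\chi_2 = \chi\}$ is a unitary transformation in the spectral decomposition, so the bilinear spectral sum is preserved.

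The main obstacle is bookkeeping rather than substance: keeping track of the normalisation $\langle \iota_{1,q_1,q} f, \iota_{1,q_1,q} f\rangle_q = 1$ (which is the convention absorbed into the definition of $\xi_f(\ell,d)$ in the previous lemma and allows the coefficients $|\xi_f(1,d)|^2$ to appear cleanly), and verifying that the coprimality hypothesis $(mn,q)=1$ is strong enough to (i) kill all terms with $\ell > 1$ in the Fourier expansion of $f_d$, (ii) let us invoke the Hecke multiplicativity \eqref{cuspmult} in the form $\lambda_f(n)$ for $n \geq 1$, and (iii) apply the conjugation identity \eqref{cuspconj}. Once these points are handled, both \eqref{pre-Kuznetsov} and \eqref{Kuznetsov} follow by direct substitution into the pre-Kuznetsov and Kuznetsov theorems of the previous section, the former with the spectral weight $1/(\cosh\pi(r-t_f)\cosh\pi(r+t_f))$ (respectively in $t$ for the continuous part) and the latter with the weight $h(t_f)/\cosh\pi t_f$ (respectively $h(t)/\cosh\pi t$).
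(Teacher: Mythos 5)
Your proposal is correct and follows essentially the same route as the paper: substitute the explicit orthonormal basis \eqref{basisqchi} into the pre-Kuznetsov and Kuznetsov formul\ae{}, use $\rho_{f_d}(n) = \xi_f(1,d)\rho_f(1)\lambda_f(n)/\sqrt{n}$ for $(n,q)=1$ together with \eqref{cuspconj} so that the sum over $d \mid q_2$ collapses to $\xi_f$, and rewrite the continuous part via the spanning set $\BB(\chi_1,\chi_2)$ using \eqref{Eisensteinconj} and \eqref{Eisensteinrholambda}. (Only a cosmetic remark: the multiplicativity relation \eqref{cuspmult} is not actually needed here, just \eqref{cusprholambda} and \eqref{cuspconj}.)
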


\subsection{Atkin--Lehner Decomposition for \texorpdfstring{$\Gamma_1(q)$}{\textGamma\9040\201(q)}}

We recall the decomposition
\[\AA_{\kappa}\left(\Gamma_1(q)\right) = \bigoplus_{\substack{\chi \hspace{-.25cm}\pmod{q} \\ \chi(-1) = (-1)^{\kappa}}} \AA_{\kappa}(q,\chi),\]
which follows from the fact that $\Gamma_1(q)$ is a normal subgroup of $\Gamma_0(q)$ with quotient group isomorphic to $(\Z/q\Z)^{\times}$, noting that $\AA_{\kappa}(q,\chi) = \{0\}$ if $\chi(-1) \neq (-1)^{\kappa}$. From this, we obtain the natural basis of $\AA_{\kappa}\left(\Gamma_1(q)\right)$ given by
\begin{equation}\label{basisGamma1(q)}
\BB_{\kappa}\left(\Gamma_1(q)\right) = \bigsqcup_{\substack{\chi \hspace{-.25cm} \pmod{q} \\ \chi(-1) = (-1)^{\kappa}}} \bigsqcup_{\substack{q_1 q_2 = q \\ q_1 \equiv 0 \hspace{-.25cm} \pmod{q_{\chi}}}} \bigsqcup_{f \in \BB_{\kappa}^{\ast}(q_1,\chi)} \bigsqcup_{d \mid q_2} \left\{f_d = \sum_{\ell \mid d} \xi_f(\ell,d) \iota_{\ell,q_1,q} f\right\}.
\end{equation}
This allows us to use the pre-Kuznetsov and Kuznetsov formul\ae{} \eqref{pre-Kuznetsov} and \eqref{Kuznetsov} for $\BB_{\kappa}(\Gamma_1(q))$ and $\BB_0(\Gamma_1(q))$, even though ostensibly these two formul\ae{} are only set up for $\BB_{\kappa}(q,\chi)$ and $\BB_0(q,\chi)$.

\subsection{Atkin--Lehner Decomposition for \texorpdfstring{$\Gamma(q)$}{\textGamma(q)}}

A similar decomposition also holds for $\AA_{\kappa}(\Gamma(q))$. In this case, the fact that
\begin{align*}
\Gamma_0\left(q^2\right) \cap \Gamma_1(q) & = \left\{\begin{pmatrix} a & b \\ c & d \end{pmatrix} \in \SL_2(\Z) : a,d \equiv 1 \hspace{-.25cm} \pmod{q}, \ c \equiv 0 \hspace{-.25cm} \pmod{q^2}\right\}	\\
& = \begin{pmatrix} q^{-1} & 0 \\ 0 & 1 \end{pmatrix} \Gamma(q) \begin{pmatrix} q & 0 \\ 0 & 1 \end{pmatrix}
\end{align*}
implies that
\[\AA_{\kappa}(\Gamma(q)) = \iota_{q^{-1}} \AA_{\kappa}\left(\Gamma_0\left(q^2\right) \cap \Gamma_1(q)\right),\]
where $\iota_{q^{-1}} : \AA_{\kappa}\left(\Gamma_0\left(q^2\right) \cap \Gamma_1(q)\right) \to \AA_{\kappa}(\Gamma(q))$ is the map $\iota_{q^{-1}} f(z) = f\left(q^{-1}z\right)$. As $\Gamma_0\left(q^2\right) \cap \Gamma_1(q)$ is a normal subgroup of $\Gamma_0\left(q^2\right)$ with quotient group isomorphic to $(\Z/q\Z)^{\times}$, we obtain the decomposition
\[\AA_{\kappa}\left(\Gamma(q)\right) = \bigoplus_{\substack{\chi \hspace{-.25cm}\pmod{q} \\ \chi(-1) = (-1)^{\kappa}}} \iota_{q^{-1}} \AA_{\kappa}\left(q^2,\chi\right),\]
thereby allowing us to choose an explicit basis $\BB_{\kappa}(\Gamma(q))$ of $\AA_{\kappa}(\Gamma(q))$ of the form
\begin{equation}\label{basisGamma(q)}
\bigsqcup_{\substack{\chi \hspace{-.25cm} \pmod{q} \\ \chi(-1) = (-1)^{\kappa}}} \bigsqcup_{\substack{q_1 q_2 = q^2 \\ q_1 \equiv 0 \hspace{-.25cm} \pmod{q_{\chi}}}} \bigsqcup_{f \in \BB_{\kappa}^{\ast}(q_1,\chi)} \bigsqcup_{d \mid q_2} \left\{\iota_{q^{-1}} f_d = \sum_{\ell \mid d} \xi_f(\ell,d)\iota_{q^{-1}}   \iota_{\ell,q_1,q} f \right\}.
\end{equation}
Once again, this allows us to make use of the pre-Kuznetsov and Kuznetsov formul\ae{} \eqref{pre-Kuznetsov} and \eqref{Kuznetsov} for $\BB_{\kappa}(\Gamma(q))$ and $\BB_0(\Gamma(q))$.

\section{Bounds for Fourier Coefficients of Newforms}

In the Kuznetsov formula \eqref{Kuznetsov}, the Fourier coefficients $|\rho_f(1)|^2$ and the normalisation factor $\xi_f$ both appear naturally. To remove these weights, we obtain lower bounds for $|\rho_f(1)|^2$ and $\xi_f$. For the former, such bounds are well-known, appearing in some generality in \cite[Equation (7.16)]{DFI}; nevertheless, we take this opportunity to correct some of the minor numerical errors in this proof, as well as greatly streamline the proof via the recent work of Li \cite{Li} on obtaining upper bounds for $L$-functions at the edge of the critical strip.

\begin{lemma}\label{xif(1)lemma}
For $f \in \BB_{\kappa}^{\ast}(q_1,\chi)$, we have that
\[\xi_f = \sum_{n \mid q_2^{\infty}} \frac{|\lambda_f(n)|^2}{n} \prod_{p \parallel q_2} \left(1 - \frac{\chi_{0(q_1)}(p)}{p^2}\right).\]
In particular, $\xi_f \gg 1$.
\end{lemma}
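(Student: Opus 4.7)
The plan is to show both sides factor as Euler products over primes $p \mid q_2$ and then match the local factors prime by prime. The joint multiplicativity of $\xi_f(\ell,d)$ in $d$ from the preceding lemma, together with the vanishing $\xi_f(1,p^t) = 0$ for $t \geq 3$, gives
\[
\xi_f = \prod_{p^{\alpha_2} \parallel q_2} \sum_{t = 0}^{\min(\alpha_2,2)} |\xi_f(1, p^t)|^2,
\]
while multiplicativity of the Hecke eigenvalues gives $\sum_{n \mid q_2^{\infty}} |\lambda_f(n)|^2/n = \prod_{p \mid q_2} \sum_{k \geq 0} |\lambda_f(p^k)|^2/p^k$. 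It therefore suffices to match these two local factors at each $p \mid q_2$, with a correction of $1 - \chi_{0(q_1)}(p)/p^2$ when $\alpha_2 = 1$ and no correction when $\alpha_2 \geq 2$.

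I split into two cases. When $p \mid q_1$, ramification forces $\chi_{0(q_1)}(p) = 0$ (so the correction is trivial) and $\xi_f(1,p^t) = 0$ for $t \geq 2$ (since $\xi_f(1,p^2) \propto \overline{\chi_{(q_1)}(p)} = 0$), while the Hecke relation $\lambda_f(p^k) = \lambda_f(p)^k$ collapses both local factors to the common geometric series $(1 - |\lambda_f(p)|^2 / p)^{-1}$. When $p \nmid q_1$, I express $\lambda_f(p^k)$ via the Satake parameters $\alpha,\beta$ (satisfying $\alpha + \beta = \lambda_f(p)$, $\alpha\beta = \chi(p)$, $|\chi(p)| = 1$) and evaluate the local Rankin--Selberg series
\[
\sum_{k \geq 0} \frac{|\lambda_f(p^k)|^2}{p^{ks}} = \frac{1 - p^{-2s}}{\prod_{i,j \in \{1,2\}} (1 - \alpha_i \overline{\alpha_j} p^{-s})}
\]
at $s = 1$. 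The identity $\overline{\lambda_f(p)} = \overline{\chi(p)} \lambda_f(p)$, which in Satake terms forces either $|\alpha| = |\beta| = 1$ or $\alpha\overline{\beta} = \overline{\alpha}\beta = 1$, makes the cross-terms collapse and yields, after clearing $p^4$, the closed form $(p-1)^2[(p+1)^2 - p|\lambda_f(p)|^2]/p^4$ for the denominator. Comparing with the explicit values $|\xi_f(1,p)|^2 = |A_f(p)|^2/(1-|A_f(p)|^2)$ and (when $\alpha_2 \geq 2$) $|\xi_f(1,p^2)|^2 = 1/(p^2(1-p^{-2})(1-|A_f(p)|^2))$ from the previous lemma, together with the identity $|A_f(p)|^2 = p|\lambda_f(p)|^2/(p+1)^2$, verifies both required identities.

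The lower bound $\xi_f \gg 1$ is immediate from the established formula: each local sum $\sum_{k \geq 0} |\lambda_f(p^k)|^2/p^k$ is at least $1$ (the $k = 0$ term, since $\lambda_f(1) = 1$), and the correction product satisfies $\prod_{p \parallel q_2}(1 - \chi_{0(q_1)}(p)/p^2) \geq \prod_p (1 - p^{-2}) = 6/\pi^2$. I expect the main obstacle to be the algebraic manipulation in the $p \nmid q_1$ case; the simplification hinges crucially on the cross-term collapse enforced by $\overline{\lambda_f(p)} = \overline{\chi(p)} \lambda_f(p)$, without which the Rankin--Selberg denominator would not match the expressions for $\xi_f(1,p^t)$ arising from the inner product lemma.
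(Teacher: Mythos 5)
Your proof is correct and follows essentially the same route as the paper: factor $\xi_f = \sum_{d \mid q_2}|\xi_f(1,d)|^2$ into local sums via joint multiplicativity, evaluate them from the explicit values of $\xi_f(1,p^r)$, and match against the local Rankin--Selberg series. Your Satake-parameter computation in the $p \nmid q_1$ case is simply a detailed verification of the identity $\frac{1}{1-|A_f(p)|^2} = \left(1 - \frac{\chi_{0(q_1)}(p)}{p^2}\right)\sum_{k \geq 0}\frac{|\lambda_f(p^k)|^2}{p^k}$ that the paper asserts without proof, so the two arguments coincide in substance.
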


\begin{proof}
By multiplicativity,
\[\xi_f \defeq \sum_{d \mid q_2} |\xi_f(1,d)|^2 = \prod_{p^t \parallel q_2} \sum_{r = 0}^{t} |\xi_f(1,p^r)|^2.\]
We have that
\[\sum_{r = 0}^{t} |\xi_f(1,p^r)|^2 = \begin{dcases*}
1 & if $t = 0$,	\\
\frac{1}{1 - |A_f(p)|^2} & if $t = 1$,	\\
\frac{1}{(1 - \chi_{0(q_1)}(p) p^{-2}) (1 - |A_f(p)|^2)} & if $t \geq 2$.
\end{dcases*}\]
The result then follows from the fact that
\[\frac{1}{1 - |A_f(p)|^2} = \left(1 - \frac{\chi_{0(q_1)}(p)}{p^2}\right) \sum_{k = 0}^{\infty} \frac{|\lambda_f(p^k)|^2}{p^k}.\qedhere\]
\end{proof}

For $f \in \BB_{\kappa}(q,\chi)$, we define
\[\nu_f \defeq \Gamma\left(\frac{1 + \kappa}{2} + it_f\right) \Gamma\left(\frac{1 + \kappa}{2} - it_f\right) |\rho_f(1)|^2.\]
Note that
\[\Gamma\left(\frac{1 + \kappa}{2} + it\right) \Gamma\left(\frac{1 + \kappa}{2} - it\right) = \begin{dcases*}
\frac{\pi}{\cosh \pi t} & if $\kappa = 0$,	\\
\frac{\pi t}{\sinh \pi t} & if $\kappa = 1$.
\end{dcases*}\]

\begin{lemma}\label{L2idlemma}
Suppose that $f \in \BB_{\kappa}^{\ast}(q_1,\chi)$ for some $q_1 \mid q$. Then
\[\frac{\langle \iota_{1,q_1,q} f, \iota_{1,q_1,q} f \rangle_q}{\vol\left(\Gamma_0(q) \backslash \Hb\right)} = \nu_f \Res_{s = 1} \sum_{n = 1}^{\infty} \frac{|\lambda_f(n)|^2}{n^s}.\]
\end{lemma}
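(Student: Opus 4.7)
The plan is to compute $\langle \iota_{1,q_1,q} f, \iota_{1,q_1,q} f \rangle_q$ by Rankin--Selberg unfolding against the weight-$0$ real-analytic Eisenstein series $E(z,s)$ for $\Gamma_0(q)$ attached to the cusp at infinity. This is essentially the specialisation of the computation in the proof of \hyperref[innerproductlemma]{Lemma \ref*{innerproductlemma}} to $\ell_1 = \ell_2 = 1$, so very little new work is needed beyond reading off the right factor.

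Concretely, I would set
\[F(s) \defeq \int_{\Gamma_0(q) \backslash \Hb} \left|\iota_{1,q_1,q} f(z)\right|^2 E(z,s) \, d\mu(z)\]
for $\Re(s) > 1$. Unfolding and applying Parseval as in the proof of \hyperref[innerproductlemma]{Lemma \ref*{innerproductlemma}}, with $\ell_1 = \ell_2 = 1$ so that $\ell' = \ell'' = 1$ and the $B_f$-factors are trivial, formula \eqref{F(s)} collapses to
\[F(s) = \frac{|\rho_f(1)|^2}{(4\pi)^{s - 1}} \left(\sum_{n = 1}^{\infty} \frac{|\lambda_f(n)|^2}{n^s}\right) J_{\kappa}(s, t_f),\]
where $J_{\kappa}(s, t_f)$ denotes the Mellin transform
\[J_{\kappa}(s, t_f) \defeq \int_{0}^{\infty} \left(W_{\frac{\kappa}{2}, it_f}(y)^2 + \left|\frac{\Gamma\left(\frac{1 + \kappa}{2} + it_f\right)}{\Gamma\left(\frac{1 - \kappa}{2} + it_f\right)}\right|^2 W_{-\frac{\kappa}{2}, it_f}(y)^2\right) y^{s - 1} \, \frac{dy}{y}.\]

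Next I would take residues at $s = 1$. On the left-hand side, $E(z,s)$ has residue $1/\vol(\Gamma_0(q) \backslash \Hb)$ at $s = 1$, constant in $z$ (the same fact invoked in the proof of \hyperref[innerproductlemma]{Lemma \ref*{innerproductlemma}}), yielding $\langle \iota_{1,q_1,q} f, \iota_{1,q_1,q} f \rangle_q / \vol(\Gamma_0(q) \backslash \Hb)$. On the right, the prefactor $(4\pi)^{-(s-1)}$ and $J_{\kappa}(s, t_f)$ are holomorphic at $s = 1$, while the Rankin--Selberg Dirichlet series contributes a simple pole, so matching residues reduces the lemma to the identity
\[J_{\kappa}(1, t_f) = \Gamma\left(\frac{1 + \kappa}{2} + it_f\right) \Gamma\left(\frac{1 + \kappa}{2} - it_f\right),\]
whereupon $|\rho_f(1)|^2 J_{\kappa}(1, t_f) = \nu_f$ by definition and the claim follows.

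The main obstacle is thus the evaluation $J_{\kappa}(1, t_f)$. This is a classical Whittaker function identity that can be extracted from the Mellin--Barnes representation of $W_{\mu,\nu}(y)^2$ via Barnes' second lemma, or alternatively from the explicit identifications $W_{0, it}(2y) = \sqrt{2y/\pi}\, K_{it}(y)$ and a corresponding formula in the weight-$1$ case. For $\kappa = 0$, both pieces contribute equally and the identity reduces to $2\int_0^\infty K_{it}(y/2)^2 \, dy = \pi/\cosh \pi t$; for $\kappa = 1$, the Gamma prefactor absorbs the asymmetry between $W_{1/2, it}$ and $W_{-1/2, it}$ to yield $\pi t / \sinh \pi t$. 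Either way, the identity is available in standard tables (e.g., \cite[7.611]{GR} in combination with the reflection formula), and I would cite it directly rather than rederive it, thereby completing the proof.
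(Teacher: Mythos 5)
Your proposal is correct and follows essentially the same route as the paper: specialise the unfolded integral \eqref{F(s)} to $\ell_1 = \ell_2 = 1$, take the residue at $s = 1$ using $\Res_{s=1} E(z,s) = 1/\vol(\Gamma_0(q) \backslash \Hb)$, and evaluate the Whittaker integral via \cite[7.611.4]{GR} together with the gamma and digamma reflection formul\ae{} to obtain $\Gamma\left(\frac{1+\kappa}{2} + it_f\right) \Gamma\left(\frac{1+\kappa}{2} - it_f\right)$. (Only your parenthetical $K$-Bessel reformulation for $\kappa = 0$ drops a factor of $\pi$ coming from $W_{0,it}(2y) = \sqrt{2y/\pi}\, K_{it}(y)$ — the correct reduction is $\frac{2}{\pi} \int_0^\infty K_{it}(y/2)^2 \, dy = \pi/\cosh \pi t$ — but this is a side remark and does not affect the argument.)
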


\begin{proof}
We let $\ell_1 = \ell_2 = 1$ in \eqref{F(s)} and take the residue at $s = 1$, yielding
\begin{multline*}
\frac{\langle \iota_{1,q_1,q} f, \iota_{1,q_1,q} f \rangle_q}{\vol\left(\Gamma_0(q) \backslash \Hb\right)} = |\rho_f(1)|^2 \Res_{s = 1} \sum_{n = 1}^{\infty} \frac{|\lambda_f(n)|^2}{n^s}	\\
\times \int_{0}^{\infty} \left(W_{\frac{\kappa}{2}, it_f}(y)^2 + \left|\frac{\Gamma\left(\frac{1 + \kappa}{2} + it_f\right)}{\Gamma\left(\frac{1 - \kappa}{2} + it_f\right)}\right|^2 W_{-\frac{\kappa}{2}, it_f}(y)^2\right) \, \frac{dy}{y},
\end{multline*}
since the residue of $E(z,s)$ at $s = 1$ is $1/\vol\left(\Gamma_0(q) \backslash \Hb\right)$. We have by \cite[7.611.4]{GR} that for $\kappa \in \C$ and $-1/2 < \Re(it) < 1/2$,
\[\int_{0}^{\infty} W_{\frac{\kappa}{2}, it}(y)^2 \, \frac{dy}{y} = \frac{\pi}{\sin 2\pi it} \frac{\psi\left(\frac{1 - \kappa}{2} + it\right) - \psi\left(\frac{1 - \kappa}{2} - it\right)}{\Gamma\left(\frac{1 - \kappa}{2} + it\right) \Gamma\left(\frac{1 - \kappa}{2} - it\right)},\]
where $\psi$ is the digamma function; note that a slightly erroneous version of this appears in \cite[Equation (19.6)]{DFI}. By the gamma and digamma reflection formul\ae{}, we find that
\begin{multline*}
\int_{0}^{\infty} \left(W_{\frac{\kappa}{2}, it_f}(y)^2 + \left|\frac{\Gamma\left(\frac{1 + \kappa}{2} + it_f\right)}{\Gamma\left(\frac{1 - \kappa}{2} + it_f\right)}\right|^2 W_{-\frac{\kappa}{2}, it_f}(y)^2\right) \, \frac{dy}{y}	\\
= \Gamma\left(\frac{1 + \kappa}{2} + it_f\right) \Gamma\left(\frac{1 + \kappa}{2} - it_f\right)
\end{multline*}
assuming that $t_f \in [0,\infty)$ if $\kappa = 1$ and $t_f \in [0,\infty)$ or $it_f \in (0,1/2)$ if $\kappa = 0$. 
\end{proof}

\begin{corollary}
Suppose that $f \in \BB_{\kappa}^{\ast}\left(q_1,\chi\right)$ for some $q_1 \mid q$. Then
\begin{equation}\label{rho1lowerbound}
\nu_f \gg_{\e} \frac{\langle \iota_{1,q_1,q} f, \iota_{1,q_1,q} f \rangle_q}{\vol\left(\Gamma_0(q) \backslash \Hb\right)} \left(q \left(3 + t_f^2\right)\right)^{-\e}.
\end{equation}
\end{corollary}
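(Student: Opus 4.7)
The plan is to invert the identity in Lemma \ref{L2idlemma} to isolate $\nu_f$, and then bound from above the residue at $s = 1$ of the Dirichlet series $D_f(s) \defeq \sum_{n \geq 1} |\lambda_f(n)|^2/n^s$. Indeed, Lemma \ref{L2idlemma} gives
\[\nu_f = \frac{\langle \iota_{1,q_1,q} f, \iota_{1,q_1,q} f \rangle_q/\vol(\Gamma_0(q) \backslash \Hb)}{\Res_{s=1} D_f(s)},\]
so the desired lower bound for $\nu_f$ is equivalent to the upper bound $\Res_{s=1} D_f(s) \ll_{\e} (q(3 + t_f^2))^{\e}$, which is what I would prove.

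Next, I would use the Hecke multiplicativity relation \eqref{cuspmult} together with \eqref{cuspconj} to expand $D_f(s)$ as an Euler product. At each prime $p \nmid q_1$, writing $\lambda_f(p) = \alpha_p + \beta_p$ with $\alpha_p \beta_p = \chi(p)$, the local factor is the standard Rankin--Selberg local factor for $f \otimes \bar{f}$, so globally
\[D_f(s) = \frac{L(s, f \otimes \bar{f})}{\zeta^{(q_1)}(2s)} \cdot R_{q_1}(s) = \frac{\zeta(s) L(s, \mathrm{Ad}\, f)}{\zeta^{(q_1)}(2s)} \cdot R_{q_1}(s),\]
where $R_{q_1}(s)$ collects the finitely many local corrections at primes dividing $q_1$, and the second equality uses the standard factorisation $L(s, f \otimes \bar{f}) = \zeta(s) L(s, \mathrm{Ad}\, f)$. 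Using the Kim--Sarnak bound $|\lambda_f(p^k)| \ll_{\e} p^{k(7/64 + \e)}$, each local factor appearing in $R_{q_1}(s)$ at $p \mid q_1$ is bounded at $s = 1$ by $O_{\e}(p^{\e})$, so $R_{q_1}(1) \ll_{\e} q^{\e}$; the factor $\zeta^{(q_1)}(2)$ is bounded below by an absolute constant.

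Taking residues, $\Res_{s=1} D_f(s)$ reduces to $L(1, \mathrm{Ad}\, f)$ times quantities controlled by $q^{\e}$. The main ingredient is then the upper bound of Li \cite{Li} for automorphic $L$-functions at the edge of the critical strip, which yields
\[L(1, \mathrm{Ad}\, f) \ll_{\e} C(\mathrm{Ad}\, f)^{\e} \ll_{\e} \left(q(3 + t_f^2)\right)^{\e},\]
since the analytic conductor $C(\mathrm{Ad}\, f)$ is polynomially bounded in $q$ and $1 + |t_f|$. Combining these bounds gives the required estimate on the residue and completes the proof. The main obstacle, which is circumvented by invoking Li's theorem, is precisely this polylogarithmic upper bound for $L(1, \mathrm{Ad}\, f)$; prior to Li one would have to appeal to the more classical arguments of Hoffstein--Lockhart or Iwaniec to obtain the same qualitative estimate with considerably more technical effort, which accounts for the streamlining mentioned in the paper.
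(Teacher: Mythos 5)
Your proposal is correct and follows essentially the same route as the paper: both invert the identity of Lemma \ref{L2idlemma}, factor $\sum_n |\lambda_f(n)|^2 n^{-s}$ as $\zeta(s) L(s, \ad f)/\zeta(2s)$ times harmless local corrections at ramified primes, and then invoke Li's theorem to bound $L(1, \ad f)$ by $(q(3+t_f^2))^{\e}$. The only quibble is your closing aside: Hoffstein--Lockhart concerns \emph{lower} bounds for $L(1,\ad f)$, whereas what is needed here is an upper bound, but this does not affect the argument.
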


\begin{proof}
It is known that
\[\sum_{n = 1}^{\infty} \frac{|\lambda_f(n)|^2}{n^s} = \frac{\zeta(s) L(s, \ad f)}{\zeta(2s)} \prod_{p \mid q} P_{f,p}(p^{-s}),\]
where for each prime $p$ dividing $q$, $P_{f,p}(z)$ is a rational function satisfying $p^{-\e} \ll_{\e} P_{f,p}(p^{-1}) \leq 1$. The work of Li \cite[Theorem 2]{Li} then shows that
\[L(1, \ad f) \ll \exp\left(C \frac{\log \left(q \left(3 + t_f^2\right)\right)}{\log \log \left(q \left(3 + t_f^2\right)\right)}\right)\]
for some absolute constant $C > 0$, thereby yielding the result.
\end{proof}

\section{Bounds for Sums of Kloosterman Sums}

We denote by
\[S(m,n;c) \defeq \sum_{d \in (\Z/c\Z)^{\times}} e\left(\frac{md + n\overline{d}}{c}\right)\]
the usual Kloosterman sum with trivial character, for which the Weil bound holds:
\begin{equation}\label{Weilbound}
|S(m,n;c)|\leq \tau(c) \sqrt{(m,n,c) c}.
\end{equation}
We also require bounds for Kloosterman sums with nontrivial character. For $c \equiv 0 \pmod{q}$, $m,n \geq 1$, and $(a,q) = 1$, we have that
\begin{multline*}
\sum_{\substack{\chi \hspace{-.25cm} \pmod{q} \\ \chi(-1) = (-1)^{\kappa}}} \overline{\chi}(a) S_{\chi}(m,n;c)	\\
= \frac{1}{2} \sum_{d \in (\Z/c\Z)^{\times}} \sum_{\chi \hspace{-.25cm} \pmod{q}} \overline{\chi}(a) \left(\chi(d) + (-1)^{\kappa} \chi(-d)\right) e\left(\frac{md + n\overline{d}}{c}\right).
\end{multline*}
We break this up into two sums. In the second sum, we can replace $d$ with $-d$ and $\chi$ with $\overline{\chi}$ and use character orthogonality to see that
\begin{equation}\label{sumKloosterman}
\sum_{\substack{\chi \hspace{-.25cm} \pmod{q} \\ \chi(-1) = (-1)^{\kappa}}} \overline{\chi}(a) S_{\chi}(m,n;c) =
\begin{dcases*}
\varphi(q) \Re\left(S_{a(q)}(m,n;c)\right) & if $\kappa = 0$,	\\
i \varphi(q) \Im\left(S_{a(q)}(m,n;c)\right) & if $\kappa = 1$,
\end{dcases*}
\end{equation}
where we set
\[S_{a(q)}(m,n;c) \defeq \sum_{\substack{d \in (\Z/c\Z)^{\times} \\ d \equiv a \hspace{-.25cm} \pmod{q}}} e\left(\frac{md + n\overline{d}}{c}\right).\]

If $c = c_1 c_2$ with $(c_1,c_2) = 1$ and $c_1 c_2 \equiv 0 \pmod{q}$, then we let $d = c_2 \overline{c_2} d_1 + c_1 \overline{c_1} d_2$, where $d_1 \in (\Z/c_1 \Z)^{\times}$, $d_2 \in (\Z/c_2 \Z)^{\times}$, and $c_2 \overline{c_2} \equiv 1 \pmod{c_1}$, $c_1 \overline{c_1} \equiv 1 \pmod{c_2}$. By the Chinese remainder theorem,
\[S_{a(q)}(m,n;c) = S_{a((q,c_1))}\left(m\overline{c_2}, n\overline{c_2}; c_1\right) S_{a((q,c_2))}\left(m\overline{c_1}, n\overline{c_1}; c_2\right).\]
To bound $S_{a(q)}(m,n;c)$, it therefore suffices to find bounds for $S_{a(p^{\alpha})}\left(m,n;p^{\beta}\right)$ for any prime $p$ and any $\beta \geq \alpha \geq 1$. The trivial bound is merely
\begin{equation}\label{trivKloostbound}
\left|S_{a(p^{\alpha})}\left(m,n;p^{\beta}\right)\right| \leq p^{\beta - \alpha}.
\end{equation}
Somewhat surprisingly, this is sufficient for our needs. Indeed, we cannot do better than this when $\beta = \alpha$, and in our applications, this will be the dominant contribution.

We also require bounds for $S_{\chi}(m,n;c)$. Unfortunately, it is not necessarily the case that this is bounded by $\tau(c) \sqrt{(m,n,c) c}$, which can be observed numerically at \cite{LMFDB}; see also \cite[Example 9.9]{KL}.

\begin{lemma}\label{weakWeillemma}
Let $p$ be an odd prime, let $\chi_{p^{\gamma}}$ be a Dirichlet character of conductor $p^{\gamma}$, and suppose that $(mn,p) = 1$. Then for $\beta \geq \gamma \geq 0$, we have that
\[\left|S_{\chi_{p^{\gamma}}}(m,n;p^{\beta})\right| \leq 2 p^{\beta/2}\]
unless $\beta = \gamma \geq 3$, in which case we only have that
\[\left|S_{\chi_{p^{\gamma}}}(m,n;p^{\beta})\right| \leq 2 p^{\lfloor \frac{3\beta + 1}{4} \rfloor}.\]

Similarly, let $\chi_{2^{\gamma}}$ be a Dirichlet character of conductor $2^{\gamma}$, and suppose that $(mn,2) = 1$. Then for $\beta \geq \gamma \geq 0$, we have that
\[\left|S_{\chi_{2^{\gamma}}}(m,n;2^{\beta})\right| \leq 8 \cdot 2^{\beta/2}\]
unless $\gamma + 1 \geq \beta \geq 3$, in which case we only have that
\[\left|S_{\chi_{2^{\gamma}}}(m,n;2^{\beta})\right| \leq 4 \cdot 2^{\lfloor \frac{3\beta + 1}{4} \rfloor}.\]
\end{lemma}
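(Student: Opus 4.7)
The strategy is the $p$-adic stationary phase method (``completion to a quadratic''), adapted to track the twist by $\chi_{p^\gamma}$. I would first reduce to the case $c = 1$ and $(mn, p) = 1$: the change of variable $d \mapsto \overline{c} d$ converts $S_{\chi_{p^\gamma}}(mc, nc; p^\beta)$ into $\overline{\chi}(c) S_{\chi_{p^\gamma}}(m, nc^2; p^\beta)$, and since $(c,p) = 1$ this preserves all relevant hypotheses on $m, n$. The small case $\beta = 1$ (which forces $\gamma \in \{0, 1\}$) is then handled by the classical Weil bound for (possibly twisted) Kloosterman sums modulo a prime, yielding the required $2 p^{1/2}$.

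For $p$ odd and $\beta \geq 2$ I would set $r = \lceil \beta / 2 \rceil$ and parametrise $d \in (\Z / p^\beta \Z)^{\times}$ multiplicatively as $d = x(1 + p^r y)$ with $x$ running over representatives of $(\Z / p^r \Z)^{\times}$ and $y \in \Z / p^{\beta - r} \Z$. Using the $p$-adic logarithm isomorphism $(1 + p \Z_p, \times) \simeq (p \Z_p, +)$ (valid for odd $p$), there is a fixed $\alpha_\chi \in \Z$, coprime to $p$ exactly when $\gamma > r$, for which $\chi(1 + p^r y) = e(\alpha_\chi y / p^{\gamma - r})$ (and $\chi$ is trivial on $1 + p^r \Z$ when $\gamma \leq r$). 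Expanding $\overline{d} \equiv \overline{x}(1 - p^r y) \pmod{p^\beta}$ gives
\[m d + n \overline{d} \equiv m x + n \overline{x} + p^r y (m x - n \overline{x}) \pmod{p^\beta},\]
so performing the complete additive sum over $y$ reduces the Kloosterman sum, up to a factor $p^{\beta - r}$, to a sum over those $x \in (\Z / p^r \Z)^{\times}$ satisfying a quadratic congruence $m x^2 + \alpha x - n \equiv 0 \pmod{p^{\beta - r}}$, where $\alpha \defeq \alpha_\chi p^{\beta - \gamma}$.

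Everything now boils down to counting solutions of this congruence, governed by the $p$-adic valuation of the discriminant $\Delta = \alpha^2 + 4 m n$. Whenever $\beta \neq \gamma$, either $\alpha = 0$ or $v_p(\alpha^2) \geq 2$, so $\Delta \equiv 4 m n \pmod p$ is a $p$-unit and Hensel's lemma yields at most $2$ solutions modulo $p^{\beta - r}$; lifting to at most $2 p^{2 r - \beta}$ residues modulo $p^r$ gives $|S_{\chi_{p^\gamma}}(m, n; p^\beta)| \leq 2 p^r$, which matches $2 p^{\beta / 2}$ when $\beta$ is even and is sharpened to $2 p^{\beta / 2}$ when $\beta$ is odd by one further round of stationary phase producing a quadratic Gauss sum of modulus $\sqrt p$, exactly as in the classical evaluation of $S(m, n; p^\beta)$. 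In the exceptional case $\beta = \gamma \geq 3$, however, $\alpha = \alpha_\chi$ is itself a $p$-unit and $\Delta$ may have arbitrarily large $p$-adic valuation; the standard count for a degenerate quadratic congruence modulo $p^k$ admits up to $2 p^{\lfloor k / 2 \rfloor}$ solutions, yielding
\[|S_{\chi_{p^\gamma}}(m, n; p^\beta)| \leq 2 p^{\lfloor (\beta - r)/2 \rfloor + r} = 2 p^{\lfloor (3 \beta + 1)/4 \rfloor},\]
as claimed. For $p = 2$ the argument follows the same outline, but requires $r \geq 2$ throughout (since the $2$-adic exponential only identifies $(1 + 4 \Z_2, \times)$ with $(4 \Z_2, +)$) together with a separate summation over the $\{\pm 1\}$ factor in $(\Z / 2^\beta \Z)^{\times}$; this produces the constants $8$ and $4$ and widens the exceptional range to $\gamma + 1 \geq \beta \geq 3$. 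I expect the main technical obstacle to be the careful $p$-adic book-keeping in the exceptional range needed to extract the sharp exponent $\lfloor (3 \beta + 1) / 4 \rfloor$ rather than a weaker $3 \beta / 4 + O(1)$.
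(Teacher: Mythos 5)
Your proposal is essentially correct, but it takes a different route from the paper: the paper does not prove this lemma at all, it simply cites Knightly--Li (Propositions 9.4, 9.7, 9.8 and the lemmata of their Chapter 9), whereas you reprove the content of those results directly by $p$-adic stationary phase. Your reduction to $c=1$ via $d \mapsto \overline{c}d$ is fine, and for odd $p$ your parametrisation $d = x(1+p^r y)$ with $r = \lceil \beta/2 \rceil$, the identification of $\chi$ on $1+p^r\Z$ with an additive character of parameter $\alpha_\chi$ (a unit exactly when $\gamma > r$, which is legitimate since $2r \geq \beta \geq \gamma$), and the resulting congruence $mx^2 + \alpha_\chi p^{\beta-\gamma}x - n \equiv 0 \pmod{p^{\beta-r}}$ reproduce the correct dichotomy: for $\beta \neq \gamma$ the discriminant is a unit and Hensel gives at most two roots, while for $\beta = \gamma \geq 3$ the degenerate count $2p^{\lfloor(\beta-r)/2\rfloor}$ does yield exactly $r + \lfloor(\beta-r)/2\rfloor = \lfloor(3\beta+1)/4\rfloor$, and your observation that $\beta=\gamma=2$ escapes the exceptional case (the congruence is then modulo $p$, hence has at most two roots regardless of the discriminant) matches the statement. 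Two places where your sketch leans on unstated book-keeping, which is precisely what the citation supplies: (i) in the ``extra round'' for odd $\beta$ with $\gamma < \beta$ you must check that the twist does not disturb the quadratic Gauss sum --- this works because with $r' = (\beta-1)/2$ one has $2r' \geq \gamma$, so $y \mapsto \chi(1+p^{r'}y)$ is still additive and the Gauss sum's quadratic coefficient $n\overline{x}$ remains a unit (when $\gamma = \beta$ the logarithm contributes a quadratic correction that can cancel $n\overline{x}$, which is exactly the source of the degeneracy your coarser parametrisation sidesteps); (ii) the $p=2$ case, including the widened exceptional range $\gamma+1 \geq \beta \geq 3$ (where $\alpha = 2\alpha_\chi$ makes $\Delta = 4(\alpha_\chi^2+mn)$ possibly highly $2$-divisible) and the constants $8$ and $4$, is only asserted; carrying out the $\pm 1$ splitting and the $(1+4\Z_2,\times)\simeq(4\Z_2,+)$ bookkeeping is routine but tedious, and is the part of the argument the paper outsources entirely. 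So your approach buys a self-contained proof at the cost of these verifications, while the paper's one-line proof buys brevity at the cost of importing the precise exceptional ranges and constants from the reference.
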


\begin{proof}
This follows from \cite[Propositions 9.4, 9.7, 9.8, and Lemmata 9.6]{KL}.
\end{proof}

\begin{lemma}
When $(m,n) = 1$, we have that
\begin{align}
\sum_{\substack{c \leq 4\pi \sqrt{mn} \\ c \equiv 0 \hspace{-.25cm} \pmod{q}}} \frac{\left|S_{a(q)}(m,n;c)\right|}{c^{3/2}} & \ll \frac{(\log (mn + 1))^2}{q^{3/2}} \prod_{p \mid q} \frac{1}{1 - p^{-1/2}},	\label{Gamma1(q)cleq}\\
\sum_{\substack{c \leq 4\pi \sqrt{mn} \\ c \equiv 0 \hspace{-.25cm} \pmod{q^2}}} \frac{\left|S_{a(q)}(m,n;c)\right|}{c^{3/2}} & \ll \frac{(\log (mn + 1))^2}{q^2} \prod_{p \mid q} \frac{1}{1 - p^{-1/2}}.	\label{Gamma(q)cleq}
\end{align}
If we additionally assume that $(mn,q) = 1$, then given a Dirichlet character $\chi$ modulo $q$, we have that
\begin{equation}\label{Gamma0(q)cleq}
\sum_{\substack{c \leq 4\pi \sqrt{mn} \\ c \equiv 0 \hspace{-.25cm} \pmod{q}}} \frac{\left|S_{\chi}(m,n;c)\right|}{c^{3/2}} \ll (\log (mn + 1))^2 \frac{2^{\omega(q)} \dot{Q}}{\varphi(q)}.
\end{equation}
\end{lemma}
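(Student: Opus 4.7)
The plan is to exploit the divisibility conditions on $c$ by writing $c = c_1 c_2$ with $c_1$ supported at the prime divisors of $q$ and $(c_2,q) = 1$. By the Chinese Remainder Theorem, both Kloosterman sums factor as
\[S_{\star}(m,n; c_1 c_2) = (\text{local factor at } c_1) \cdot S(m\overline{c_1}, n\overline{c_1}; c_2),\]
since the congruence $d \equiv a \pmod{q}$ and the character $\chi \pmod q$ depend only on the reduction of $d$ modulo the $q$-part of $c$. The second factor is an ordinary Kloosterman sum, bounded by $\tau(c_2)\sqrt{c_2}$ via the Weil bound \eqref{Weilbound} (using $(m,n) = 1$), and the sum over $c_2$ coprime to $q$ with $c_2 \leq 4\pi\sqrt{mn}/c_1$ of $\tau(c_2)/c_2$ is $\ll (\log(mn+1))^2$ by the standard divisor bound.

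For \eqref{Gamma1(q)cleq} and \eqref{Gamma(q)cleq}, I apply the trivial bound \eqref{trivKloostbound} prime by prime to the local factor, yielding $|S_{a(q)}(m\overline{c_2}, n\overline{c_2}; c_1)| \leq c_1/q$. The remaining sum over $c_1$ is a geometric Euler product:
\[\sum_{\substack{c_1 \mid q^{\infty} \\ q \mid c_1}} \frac{1}{q \, c_1^{1/2}} = \frac{1}{q^{3/2}} \prod_{p \mid q} \frac{1}{1 - p^{-1/2}},\]
which, combined with the $c_2$ contribution, gives \eqref{Gamma1(q)cleq}. For \eqref{Gamma(q)cleq}, changing the summation condition to $q^2 \mid c_1$ replaces the factor $q^{-3/2}$ by $q^{-2}$.

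For \eqref{Gamma0(q)cleq}, I apply \hyperref[weakWeillemma]{Lemma \ref*{weakWeillemma}} prime by prime to $S_{\chi}(m\overline{c_2}, n\overline{c_2}; c_1)$. At each prime $p \mid q$ with $p$-part $p^{\beta_p}$ of $c_1$ (so $\beta_p \geq \alpha_p \geq \gamma_p$), the local factor is bounded by an absolute constant times $p^{\beta_p/2}$ except in the \emph{bad} case — $p$ odd with $\beta_p = \gamma_p$ (forcing $\beta_p = \alpha_p = \gamma_p \geq 3$), or $p = 2$ with $\gamma_p + 1 \geq \beta_p \geq 3$ — in which case it is at most an absolute constant times $p^{\lfloor(3\beta_p+1)/4\rfloor}$. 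The excess over $p^{\beta_p/2}$ at a bad prime is precisely the local factor $\dot{Q}(p^{\alpha_p}, p^{\gamma_p})$. Summing the local Euler factor gives $\ll \dot{Q}(p^{\alpha_p}, p^{\gamma_p}) \cdot p^{-\alpha_p}/(1 - p^{-1})$, and taking the product over $p \mid q$ produces $2^{\omega(q)}\dot{Q}/\varphi(q)$; combining with the $c_2$ contribution yields \eqref{Gamma0(q)cleq}.

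The main obstacle is verifying that the bad-versus-generic dichotomy of \hyperref[weakWeillemma]{Lemma \ref*{weakWeillemma}} aligns exactly with the piecewise definition of $\dot{Q}(p^{\alpha}, p^{\gamma})$, particularly at $p = 2$ where the bad range includes both $\beta_p = \alpha_p$ and $\beta_p = \alpha_p + 1$; here one must check that the additional bad term is still absorbed by $\dot{Q}_2 \cdot p^{-\alpha_p}/(1 - p^{-1})$ so that the Euler product telescopes into the stated $\dot{Q}$. Away from this bookkeeping, the argument is a routine combination of CRT multiplicativity, the Weil bound on the prime-to-$q$ part, and either the trivial or Hensel-lift bounds on the $q$-part.
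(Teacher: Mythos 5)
Your argument is correct and is essentially the paper's own proof: factor $c$ into its $q$-part and prime-to-$q$ part via the Chinese remainder theorem, apply the Weil bound \eqref{Weilbound} to the coprime part (giving the $(\log(mn+1))^2$ via the divisor sum, using $(m,n)=1$), and use the trivial bound \eqref{trivKloostbound}, respectively \hyperref[weakWeillemma]{Lemma \ref*{weakWeillemma}}, on the $q$-part before summing the resulting geometric series. The $p=2$ bookkeeping you flag does work out: since $\beta_2 \geq \alpha_2 \geq \gamma_2$, the bad range $\gamma_2+1 \geq \beta_2 \geq 3$ contributes at most two values of $\beta_2$, and each bad term is $\ll \dot{Q}(2^{\alpha_2},2^{\gamma_2}) \, 2^{-\alpha_2}$, so the Euler product collapses to $2^{\omega(q)}\dot{Q}/\varphi(q)$ exactly as claimed.
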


\begin{proof}
We write $q = p_1^{\alpha_1} \cdots p_{\ell}^{\alpha_{\ell}}$, so that the left-hand side of \eqref{Gamma1(q)cleq} is
\begin{multline*}
\sum_{\beta_1 = \alpha_1}^{\infty} \cdots \sum_{\beta_{\ell} = \alpha_{\ell}}^{\infty} \frac{1}{\left(p_1^{\beta_1} \cdots p_{\ell}^{\beta_{\ell}}\right)^{3/2}} \sum_{\substack{c \leq 4\pi \sqrt{mn} p_1^{-\beta_1} \cdots p_{\ell}^{-\beta_{\ell}} \\ (c,q) = 1}} \frac{1}{c^{3/2}}	\\
\times \left|S\left(m \overline{p_1^{\beta_1} \cdots p_{\ell}^{\beta_{\ell}}}, n \overline{p_1^{\beta_1} \cdots p_{\ell}^{\beta_{\ell}}}; c\right)\right| \left|S_{a(q)}\left(m \overline{c}, n \overline{c}; p_1^{\beta_1} \cdots p_{\ell}^{\beta_{\ell}}\right)\right|.
\end{multline*}
Using the Weil bound \eqref{Weilbound} for the first Kloosterman sum and the trivial bound \eqref{trivKloostbound} for the second, we find that this is bounded by
\[\frac{1}{q} \sum_{\beta_1 = \alpha_1}^{\infty} \cdots \sum_{\beta_{\ell} = \alpha_{\ell}}^{\infty} \frac{1}{\sqrt{p_1^{\beta_1} \cdots p_{\ell}^{\beta_{\ell}}}} \sum_{\substack{c \leq 4\pi \sqrt{mn} \\ (c,q) = 1}} \frac{\tau(c) \sqrt{(m,n,c)}}{c}.\]
If $(m,n) = 1$, the inner sum is bounded by a constant multiple of $(\log(mn + 1))^2$, and so the sum is bounded by a constant multiple of
\[\frac{(\log (mn + 1))^2}{q} \sum_{\beta_1 = \alpha_1}^{\infty} \cdots \sum_{\beta_{\ell} = \alpha_{\ell}}^{\infty} \frac{1}{\sqrt{p_1^{\beta_1} \cdots p_{\ell}^{\beta_{\ell}}}} ,\]
which yields \eqref{Gamma1(q)cleq} upon evaluating these geometric series. \eqref{Gamma(q)cleq} follows similarly. Finally, \eqref{Gamma0(q)cleq} follows via the same method but using \hyperref[weakWeillemma]{Lemma \ref*{weakWeillemma}} to bound the Kloosterman sums, yielding the bound
\[8 \cdot 2^{\omega(q)} \dot{Q} \sum_{\beta_1 = \alpha_1}^{\infty} \cdots \sum_{\beta_{\ell} = \alpha_{\ell}}^{\infty} \frac{1}{p_1^{\beta_1} \cdots p_{\ell}^{\beta_{\ell}}} \sum_{\substack{c \leq 4\pi \sqrt{mn} \\ (c,q) = 1}} \frac{\tau(c)}{c}\]
for the left-hand side of \eqref{Gamma0(q)cleq}, from which the result easily follows.
\end{proof}

\begin{lemma}
When $(m,n) = 1$, we have that
\begin{align}
\sum_{\substack{c > 4\pi \sqrt{mn} \\ c \equiv 0 \hspace{-.25cm} \pmod{q}}} \frac{\left|S_{a(q)}(m,n;c)\right|}{c^2} \left(1 + \log \frac{c}{4\pi\sqrt{mn}}\right) & \ll \frac{(\log (mn + 1))^2}{(mn)^{1/4}} \frac{1}{q^{3/2}} \prod_{p \mid q} \frac{1}{1 - p^{-1/2}},	\label{Gamma1(q)cgeq}\\
\sum_{\substack{c > 4\pi \sqrt{mn} \\ c \equiv 0 \hspace{-.25cm} \pmod{q^2}}} \frac{\left|S_{a(q)}(m,n;c)\right|}{c^2} \left(1 + \log \frac{c}{4\pi\sqrt{mn}}\right) & \ll \frac{(\log (mn + 1))^2}{(mn)^{1/4}} \frac{1}{q^2} \prod_{p \mid q} \frac{1}{1 - p^{-1/2}}.	\label{Gamma(q)cgeq}
\end{align}
If we additionally assume that $(mn,q) = 1$, then given a Dirichlet character $\chi$ modulo $q$, we have that
\begin{equation}\label{Gamma0(q)cgeq}
\sum_{\substack{c > 4\pi \sqrt{mn} \\ c \equiv 0 \hspace{-.25cm} \pmod{q}}} \frac{\left|S_{\chi}(m,n;c)\right|}{c^2} \left(1 + \log \frac{c}{4\pi\sqrt{mn}}\right) \ll \frac{(\log (mn + 1))^2}{(mn)^{1/4}} \frac{2^{\omega(q)} \dot{Q}}{\varphi(q)}.
\end{equation}
\end{lemma}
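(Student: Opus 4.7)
The plan is to follow the strategy of the preceding lemma on tight terms with $c \leq 4\pi\sqrt{mn}$, using Chinese remainder theorem factorisation of $c$, the Weil bound for the part coprime to $q$, and the trivial bound \eqref{trivKloostbound} (or the weak Weil bound of \hyperref[weakWeillemma]{Lemma \ref*{weakWeillemma}} for \eqref{Gamma0(q)cgeq}) for the $q$-part. Write $c = Q c'$ with $Q = \prod_{p^{\alpha} \parallel q} p^{\beta_p}$, $\beta_p \geq \alpha$, and $(c',q) = 1$ (with $\beta_p \geq 2\alpha$ in the $\Gamma(q)$ case). Factoring via CRT yields
\[\bigl|S_{a(q)}(m,n;Qc')\bigr| \leq \tau(c') \sqrt{c'} \cdot \frac{Q}{q}\]
when $(m,n) = 1$, so the left-hand side of \eqref{Gamma1(q)cgeq} is bounded by
\[\frac{1}{q} \sum_Q \frac{1}{Q} \sum_{\substack{c' \geq \max(1, \lceil M/Q \rceil) \\ (c',q) = 1}} \frac{\tau(c')}{c'^{3/2}} \left(1 + \log \frac{Qc'}{M}\right),\]
where $M \defeq 4\pi\sqrt{mn}$.

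The key new ingredient, compared to the preceding lemma, is the handling of the weight $1 + \log(c/M)$. For each fixed $Q$, set $X \defeq M/Q$; when $X \geq 1$ the inner sum can be bounded by a constant times $(\log(M+2))^2/\sqrt{X}$ by a standard partial summation (using $\sum_{n > X} \tau(n)/n^{3/2} \ll \log(X+2)/\sqrt{X}$ and likewise with an extra logarithmic weight), while when $X < 1$ it is $\ll 1 + \log(Q/M)$. Both regimes are then absorbed into the uniform bound $(\log(M+2))^2 \sqrt{Q/M}$ using the elementary inequality $1 + \log(Q/M) \leq C\sqrt{Q/M}$ valid for $Q \geq M$.

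Summing over $Q$, the estimate reduces to evaluating the geometric series
\[\sum_Q \frac{1}{\sqrt{Q}} = \prod_{p^{\alpha} \parallel q} \sum_{\beta \geq \alpha} p^{-\beta/2} = \frac{1}{\sqrt{q}} \prod_{p \mid q} \frac{1}{1 - p^{-1/2}}\]
in the $\Gamma_1(q)$ case, which yields \eqref{Gamma1(q)cgeq}; replacing the range $\beta \geq \alpha$ with $\beta \geq 2\alpha$ gives the extra factor $1/\sqrt{q}$ needed for \eqref{Gamma(q)cgeq}. For \eqref{Gamma0(q)cgeq}, applying \hyperref[weakWeillemma]{Lemma \ref*{weakWeillemma}} in place of \eqref{trivKloostbound} contributes an extra $2^{\omega(q)} \dot{Q}$ arising from the worst case $\beta_p = \alpha_p = \gamma_p$, while the sum over $c'$ is the same.

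The main obstacle is the extra logarithmic weight in the tail, since extending the summation range from $c' > M/Q$ to $c' \geq 1$ (as was done in the proof of the previous lemma) introduces uncontrolled $\log(Q/M)$ factors when $Q$ is large; the resolution, as sketched above, is to keep the $c'$-range depending on $Q$ and exploit $1 + \log(Q/M) \ll \sqrt{Q/M}$ uniformly for $Q \geq M$, which allows the same $1/\sqrt{Q}$-type geometric series as in the previous lemma but with an extra factor $1/\sqrt{M} \asymp 1/(mn)^{1/4}$ compared to \eqref{Gamma1(q)cleq}.
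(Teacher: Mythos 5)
Your proposal is correct and follows essentially the same route as the paper: factor $c = Qc'$ via the Chinese remainder theorem, apply the Weil bound \eqref{Weilbound} to the part coprime to $q$ and the trivial bound \eqref{trivKloostbound} (respectively \hyperref[weakWeillemma]{Lemma \ref*{weakWeillemma}} for \eqref{Gamma0(q)cgeq}) to the $q$-part, then sum the resulting geometric series over $Q$. Your explicit handling of the weight $1 + \log(c/4\pi\sqrt{mn})$ in the two regimes $Q \leq 4\pi\sqrt{mn}$ and $Q > 4\pi\sqrt{mn}$, via $1 + \log(Q/M) \ll \sqrt{Q/M}$, is a more careful write-up of a step the paper leaves implicit, but it leads to the same inner-sum bound $(\log(mn+1))^2 (mn)^{-1/4}\sqrt{Q}$ and hence the same conclusion.
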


\begin{proof}
As before, with $q = p_1^{\alpha_1} \cdots p_{\ell}^{\alpha_{\ell}}$, the left-hand side of \eqref{Gamma1(q)cgeq} is bounded by
\[\frac{1}{q} \sum_{\beta_1 = \alpha_1}^{\infty} \cdots \sum_{\beta_{\ell} = \alpha_{\ell}}^{\infty} \frac{1}{p_1^{\beta_1} \cdots p_{\ell}^{\beta_{\ell}}} \sum_{\substack{c > 4\pi \sqrt{mn} p_1^{-\beta_1} \cdots p_{\ell}^{-\beta_{\ell}} \\ (c,q) = 1}} \frac{\tau(c) \sqrt{(m,n,c)} \log c}{c^{3/2}}.\]
If $(m,n) = 1$, then the inner sum is bounded by a constant multiple of
\[\frac{(\log (mn + 1))^2}{(mn)^{1/4}} \sqrt{p_1^{\beta_1} \cdots p_{\ell}^{\beta_{\ell}}}.\]
It follows that the sum is bounded by a constant multiple of
\[\frac{(\log (mn + 1))^2}{(mn)^{1/4}} \frac{1}{q} \sum_{\beta_1 = \alpha_1}^{\infty} \cdots \sum_{\beta_{\ell} = \alpha_{\ell}}^{\infty} \frac{1}{\sqrt{p_1^{\beta_1} \cdots p_{\ell}^{\beta_{\ell}}}},\]
which gives \eqref{Gamma1(q)cgeq}. The proof of \eqref{Gamma(q)cgeq} is analogous, while \eqref{Gamma0(q)cgeq} again follows upon using \hyperref[weakWeillemma]{Lemma \ref*{weakWeillemma}} to bound the Kloosterman sums.
\end{proof}

\begin{lemma}[{Cf.~\cite[Equation (16.50)]{IK}}]
For all $1/2 < \sigma < 1$,
\begin{align}
\sum_{\substack{c = 1 \\ c \equiv 0 \hspace{-.25cm} \pmod{q}}}^{\infty} \frac{\left|S_{a(q)}(m,n;c)\right|}{c^{1 + \sigma}} & \leq \frac{18 \tau((m,n))}{(2\sigma - 1)^2} \frac{1}{q^{1 + \sigma}} \prod_{p \mid q} \frac{1}{1 - p^{-\sigma}},	\label{Gamma1(q)csigma}\\
\sum_{\substack{c = 1 \\ c \equiv 0 \hspace{-.25cm} \pmod{q^2}}}^{\infty} \frac{\left|S_{a(q)}(m,n;c)\right|}{c^{1 + \sigma}} & \leq \frac{18 \tau((m,n))}{(2\sigma - 1)^2} \frac{1}{q^{1 + 2\sigma}} \prod_{p \mid q} \frac{1}{1 - p^{-\sigma}}.	\label{Gamma(q)csigma}
\end{align}
If we additionally assume that $(m,n) = (mn,q) = 1$, then given a Dirichlet character $\chi$ modulo $q$, we have that
\begin{equation}\label{Gamma0(q)csigma}
\sum_{\substack{c = 1 \\ c \equiv 0 \hspace{-.25cm} \pmod{q}}}^{\infty} \frac{\left|S_{\chi}(m,n;c)\right|}{c^{1 + \sigma}} \leq \frac{72}{(2\sigma - 1)^2} \frac{2^{\omega(q)} \dot{Q}}{\varphi(q) q^{\sigma - 1/2}}.
\end{equation}
\end{lemma}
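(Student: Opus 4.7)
The plan is to mimic the strategy of the previous two lemmas, splitting $c$ via the Chinese remainder theorem as $c = Pc'$, where $P = p_1^{\beta_1} \cdots p_\ell^{\beta_\ell}$ is supported on the primes $p_1, \ldots, p_\ell$ dividing $q$ with $\beta_i \geq \alpha_i$ (respectively $\beta_i \geq 2\alpha_i$ in the case of \eqref{Gamma(q)csigma}), and $(c',q) = 1$. The generalised Kloosterman sum factorises multiplicatively under this decomposition, and I would treat the two factors with different tools.

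For the factor at $P$ I would apply the trivial bound \eqref{trivKloostbound} in the cases \eqref{Gamma1(q)csigma} and \eqref{Gamma(q)csigma}, and \hyperref[weakWeillemma]{Lemma \ref*{weakWeillemma}} in the case of \eqref{Gamma0(q)csigma}. The latter produces the factor $\dot{Q}$ from the exceptional boundary case $\beta_i = \alpha_i$, together with $2^{\omega(q)}$ from the per-prime numerical constants. Summing the resulting geometric series over $(\beta_1,\ldots,\beta_\ell)$ yields the factor $q^{-(1+\sigma)} \prod_{p \mid q}(1-p^{-\sigma})^{-1}$ for \eqref{Gamma1(q)csigma} and analogously $q^{-(1+2\sigma)} \prod_{p \mid q}(1-p^{-\sigma})^{-1}$ for \eqref{Gamma(q)csigma}. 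For \eqref{Gamma0(q)csigma} I would obtain $q^{-(1/2+\sigma)} \prod_{p \mid q}(1-p^{-(1/2+\sigma)})^{-1}$, which I then bound by $1/(\varphi(q) q^{\sigma-1/2})$ using $1-p^{-(1/2+\sigma)} \geq 1-p^{-1}$, which holds since $1/2+\sigma > 1$.

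For the coprime factor $c'$, the Weil bound \eqref{Weilbound} yields $|S(m\overline{P},n\overline{P};c')| \leq \tau(c')\sqrt{(m,n,c')c'}$ (using that $\overline{P}$ is coprime to $c'$), and the remaining task is to bound
\[\sum_{(c',q) = 1} \frac{\tau(c')\sqrt{(m,n,c')}}{c'^{1/2+\sigma}}.\]
The standard manipulation is to set $d = (m,n,c')$ and write $c' = de$ with $((m,n)/d,e) = 1$; interchanging summation and exploiting multiplicativity of $\tau$ yields a bound of the form $C\,\tau((m,n))\,\zeta(1/2+\sigma)^2$ for an absolute constant $C$. The elementary estimate $\zeta(s) \leq s/(s-1)$ for $s > 1$, applied at $s = 1/2 + \sigma$, gives $\zeta(1/2+\sigma)^2 \leq 9/(2\sigma-1)^2$. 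Combining this with the prime-power bookkeeping yields the stated constants $18$ and $72$.

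The chief obstacle I anticipate lies in the coprime divisor-sum manipulation: obtaining a clean $\tau((m,n))$ factor (rather than $\tau_3((m,n))$ or the cruder $\sqrt{(m,n)}$) requires a careful application of multiplicativity after fixing $d = (m,n,c')$ so that the inner sum factors as $\zeta(1/2+\sigma)^2$ times a bounded divisor sum. Separately, in case \eqref{Gamma0(q)csigma}, the bookkeeping between the exceptional $\dot{Q}$-factor at $\beta_i = \alpha_i$ and the generic geometric tail at $\beta_i > \alpha_i$ must be handled so that $\dot{Q}$ appears precisely once at each prime.
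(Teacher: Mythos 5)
Your proposal is correct and follows essentially the same route as the paper: factor $c$ by the Chinese remainder theorem, apply the trivial bound \eqref{trivKloostbound} (resp.\ \hyperref[weakWeillemma]{Lemma \ref*{weakWeillemma}}) at the primes dividing $q$ and the Weil bound \eqref{Weilbound} on the coprime part, sum the geometric series, and control the coprime divisor sum by $\zeta(\sigma+\tfrac12)^2\sum_{d\mid(m,n)}\tau(d)d^{-\sigma}\le \frac{9}{(2\sigma-1)^2}\cdot 2\tau((m,n))$, which is exactly how the paper obtains the constants $18$ and $72$. The points you flag (the clean $\tau((m,n))$ factor and the $\dot{Q}$ bookkeeping, together with the reduction $\prod_{p\mid q}(1-p^{-(1/2+\sigma)})^{-1}\le q/\varphi(q)$) are handled in just the way you describe, so there is no substantive difference from the paper's argument.
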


\begin{proof}
Once again writing $q = p_1^{\alpha_1} \cdots p_{\ell}^{\alpha_{\ell}}$ and bounding the Kloosterman sums, we have that
\begin{align*}
\sum_{\substack{c = 1 \\ c \equiv 0 \hspace{-.25cm} \pmod{q}}}^{\infty} \frac{\left|S_{a(q)}(m,n;c)\right|}{c^{1 + \sigma}} & \leq \sum_{\substack{c = 1 \\ (c,q) = 1}}^{\infty} \frac{\tau(c) \sqrt{(m,n,c)}}{c^{1/2 + \sigma}} \frac{1}{q} \sum_{\beta_1 = \alpha_1}^{\infty} \cdots \sum_{\beta_{\ell} = \alpha_{\ell}}^{\infty} \frac{1}{\left(p_1^{\beta_1} \cdots p_{\ell}^{\beta_{\ell}}\right)^{\sigma}}	\\
& = \sum_{\substack{c = 1 \\ (c,q) = 1}}^{\infty} \frac{\tau(c) \sqrt{(m,n,c)}}{c^{1/2 + \sigma}} \frac{1}{q^{1 + \sigma}} \prod_{p \mid q} \frac{1}{1 - p^{-\sigma}}	\\
& \leq \zeta\left(\sigma + \frac{1}{2}\right)^2 \sum_{d \mid (m,n)} \frac{\tau(d)}{d^{\sigma}} \frac{1}{q^{1 + \sigma}} \prod_{p \mid q} \frac{1}{1 - p^{-\sigma}}	\\
& \leq \frac{18 \tau((m,n))}{(2\sigma - 1)^2} \frac{1}{q^{1 + \sigma}} \prod_{p \mid q} \frac{1}{1 - p^{-\sigma}}.
\end{align*}
This proves \eqref{Gamma1(q)csigma}. The inequality \eqref{Gamma(q)csigma} follows by a similar argument, as does \eqref{Gamma0(q)csigma} once the Kloosterman sums are bounded via \hyperref[weakWeillemma]{Lemma \ref*{weakWeillemma}}.
\end{proof}

\section{Bounds for Test Functions}

We require bounds for the test function that we will obtain by multiplying the pre-Kuznetsov formula \eqref{pre-Kuznetsov} by a function dependent on $r$ and then integrating both sides over $r \in [0,T]$.

\begin{lemma}\label{hkTlemma}
For $T \geq 1$, let
\begin{align*}
h_{\kappa,T}(t) & \defeq \frac{\pi^2}{\Gamma\left(\frac{1 + \kappa}{2} + it\right) \Gamma\left(\frac{1 + \kappa}{2} - it\right)} \int_{0}^{T} \frac{r \left|\Gamma\left(1 - \frac{\kappa}{2} + ir\right)\right|^{-2}}{\cosh \pi (r - t) \cosh \pi (r + t)} \, dr	\\
& = \begin{dcases*}
\cosh \pi t \int_{0}^{T} \frac{\sinh \pi r}{\cosh \pi (r - t) \cosh \pi (r + t)} \, dr & if $\kappa = 0$,	\\
\frac{\sinh \pi t}{t} \int_{0}^{T} \frac{r \cosh \pi r}{\cosh \pi (r - t) \cosh \pi (r + t)} \, dr & if $\kappa = 1$.
\end{dcases*}
\end{align*}
Then $h_{\kappa,T}(t)$ is positive for all $t \in \R$ and additionally, should $\kappa$ be equal to $0$, for $it \in (-1/2,1/2)$. Furthermore, $h_{\kappa,T}(t) \gg 1$ for $t \in [0,T]$.
\end{lemma}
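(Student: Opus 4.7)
The equality of the two expressions for $h_{\kappa,T}(t)$ is a direct substitution using the gamma-function identities already recalled in the paper: $|\Gamma(1 - \kappa/2 + ir)|^{-2}$ equals $\sinh(\pi r)/(\pi r)$ when $\kappa = 0$ and $\cosh(\pi r)/\pi$ when $\kappa = 1$, while $\Gamma(\tfrac{1+\kappa}{2}+it)\Gamma(\tfrac{1+\kappa}{2}-it)$ equals $\pi/\cosh(\pi t)$ or $\pi t/\sinh(\pi t)$. Plugging these in yields the two stated closed forms.

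Positivity on $\R$ is immediate, since each hyperbolic factor in $h_{\kappa,T}(t)$ is manifestly positive for $r \geq 0$ and $t \in \R$. For $\kappa = 0$ and $t = is$ with $|s| < 1/2$, I would apply the product-to-sum identity
\[\cosh \pi(r - t) \cosh \pi(r + t) = \cosh^2 \pi r + \sinh^2 \pi t,\]
which at $t = is$ becomes $\cos^2 \pi s + \sinh^2 \pi r > 0$; together with $\cosh \pi t = \cos \pi s > 0$ this establishes positivity of $h_{0,T}(is)$.

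For the lower bound $h_{\kappa,T}(t) \gg 1$ on $[0,T]$, the same identity reduces the $\kappa = 0$ integrand to $\sinh \pi r/(\cosh^2 \pi r + \sinh^2 \pi t)$, after which the substitution $u = \cosh \pi r$ gives the explicit evaluation
\[h_{0,T}(t) = \frac{\coth \pi t}{\pi}\left(\arctan \frac{\cosh \pi T}{\sinh \pi t} - \arctan \frac{1}{\sinh \pi t}\right),\]
with the $t = 0$ limit equal to $\pi^{-1}(1 - 1/\cosh \pi T)$ by direct computation. When $\sinh \pi t \geq 1$, the factor $\coth \pi t$ is at least $1$ and $\arctan(\cosh \pi T/\sinh \pi t) \geq \pi/4$ (since $\cosh \pi T \geq \sinh \pi t$ when $t \leq T$), so the arctangent difference is bounded below by an absolute constant. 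When $\sinh \pi t \leq 1$, I would apply the subtraction formula $\arctan x - \arctan y = \arctan((x-y)/(1+xy))$ together with the concavity inequality $\arctan z \geq (\pi/4) z$ for $z \in [0,1]$; the factor $\sinh \pi t$ arising in $x - y$ exactly cancels the singularity of $\coth \pi t$ as $t \to 0$, leaving an absolute lower bound uniform in $T \geq 1$.

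For $\kappa = 1$ the same identity reduces the integrand to $r \cosh \pi r/(\cosh^2 \pi r + \sinh^2 \pi t)$, which at the peak $r = t$ equals $t \cosh \pi t/\cosh 2\pi t$, of order $t e^{-\pi t}$ for large $t$; restricting the integral to a fixed-length subinterval of $[0, T]$ about $r = t$ and combining with the prefactor $\sinh \pi t/t$ of order $e^{\pi t}/t$ yields an absolute constant lower bound, while the $t$ near $0$ regime is handled directly using $\sinh \pi t/t \to \pi$. The main technical obstacle throughout is uniformity as $t$ approaches either endpoint $0$ or $T$, where one or another factor degenerates; the closed-form evaluation resolves this cleanly for $\kappa = 0$, while for $\kappa = 1$ one must choose the peak-neighbourhood on the appropriate side of $r = t$ according to which endpoint $t$ is closer to.
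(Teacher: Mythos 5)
Your proposal is correct. The identity between the two expressions and the positivity statement (including $it \in (-1/2,1/2)$ for $\kappa = 0$, via $\cosh \pi(r-t)\cosh \pi(r+t) = \cosh^2 \pi t + \sinh^2 \pi r = \cosh^2 \pi r + \sinh^2 \pi t$) are handled exactly as in the paper, and for $\kappa = 0$ your lower bound is also the paper's argument: the substitution $x = \cosh \pi r$, the evaluation $h_{0,T}(t) = \frac{\coth \pi t}{\pi}\bigl(\arctan \frac{\cosh \pi T}{\sinh \pi t} - \arctan \frac{1}{\sinh \pi t}\bigr)$, and the arctangent subtraction formula. One small repair is needed in your regime $\sinh \pi t \geq 1$: the two facts you invoke ($\coth \pi t \geq 1$ and $\arctan(\cosh \pi T/\sinh \pi t) \geq \pi/4$) only give $h_{0,T}(t) \geq 0$, because the subtracted term $\arctan(1/\sinh \pi t)$ can itself be as close to $\pi/4$ as one likes; however, the computation you describe for $\sinh \pi t \leq 1$ --- the subtraction formula, the inequality $\arctan z \geq \frac{\pi}{4}\min(z,1)$, and the cancellation $\coth \pi t \cdot \sinh \pi t = \cosh \pi t$ --- in fact gives $h_{0,T}(t) \gg 1$ on all of $[0,T]$ once $T \geq 1$, so the case distinction can simply be dropped. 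For $\kappa = 1$ you take a genuinely different route: the paper substitutes $x = \sinh \pi r$, integrates by parts to obtain an integrand weighted by a difference of arctangents, splits that integral at $\sinh \frac{\pi t}{2}$, and then estimates exponentials via the subtraction formula, whereas you localise the original integral to a subinterval of fixed length about the peak $r = t$ (placed on whichever side keeps it inside $[0,T]$, which $T \geq 1$ permits), where the integrand is $\asymp t e^{-\pi t}$, and balance this against the prefactor $\sinh \pi t/t \asymp e^{\pi t}/t$, treating small $t$ directly from the first expression. Your localisation is shorter and avoids both the integration by parts and the auxiliary splitting; the paper's manipulation produces a more explicit intermediate formula but yields nothing extra for the statement of the lemma.
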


\begin{proof}
Using the fact that
\[\cosh \pi (r - t) \cosh \pi (r + t) = \cosh^2 \pi t + \sinh^2 \pi r = \sinh^2 \pi t + \cosh^2 \pi r,\]
it is clear that $h_{\kappa,T}(t)$ is positive for all $t \in \R$ and additionally, should $\kappa$ be equal to $0$, if $it \in (-1/2,1/2)$.

For $\kappa = 0$, we have that
\begin{align*}
h_{0,T}(t) & = \frac{\cosh \pi t}{\pi} \int_{1}^{\cosh \pi T} \frac{1}{x^2 + \sinh^2 \pi t} \, dx	\\
& = \frac{\coth \pi t}{\pi} \arctan \frac{\sinh \pi t \left(\cosh \pi T - 1\right)}{\sinh^2 \pi t + \cosh\pi T},
\end{align*}
where the second line follows from the arctangent subtraction formula. The first expression shows that $h_{0,T}(t) \gg 1$ when $t$ is small, while when $t$ is large, the argument of $\arctan$ is essentially
\[\frac{e^{\pi(T + t)} - e^{\pi t}}{e^{2\pi t} + e^{\pi T}},\]
and this is bounded from below provided that $t \leq T$, so that again $h_{0,T}(t) \gg 1$.

For $\kappa = 1$, we can similarly show via integration by parts that
\begin{align*}
h_{1,T}(t) & = \frac{\sinh \pi t}{\pi^2 t} \int_{0}^{\sinh \pi T} \frac{\arsinh x}{x^2 + \cosh^2 \pi t} \, dx	\\
& = \frac{\tanh \pi t}{\pi^2 t} \int_{0}^{\sinh \pi T} \frac{ \arctan \frac{\sinh \pi T}{\cosh \pi t} - \arctan \frac{x}{\cosh \pi t}}{\sqrt{x^2 + 1}} \, dx.
\end{align*}
The first expression shows that $h_{1,T}(t) \gg 1$ when $t$ is small, while when $t$ is large, we break up the second expression into two integrals: one from $0$ to $\sinh \frac{\pi t}{2}$ and one from $\sinh \frac{\pi t}{2}$ to $\sinh \pi T$. Trivially bounding the numerator in each integral, we find that
\begin{align*}
h_{1,T}(t) & \geq \frac{\tanh \pi t}{2\pi} \left(\arctan \frac{\sinh \pi T}{\cosh \pi t} - \arctan \frac{\sinh \frac{\pi t}{2}}{\cosh \pi t}\right)	\\
& = \frac{\tanh \pi t}{2\pi} \arctan \frac{\cosh \pi t \left(\sinh \pi T - \sinh \frac{\pi t}{2}\right)}{\cosh^2 \pi t + \sinh \pi T \sinh \frac{\pi t}{2}}.
\end{align*}
The argument of $\arctan$ is essentially
\[\frac{e^{\pi(T + t)} - e^{3\pi t/2}}{e^{2\pi t} + e^{\pi(T + t/2)}},\]
and this is bounded from below provided that $t \leq T$, while $\tanh \pi t$ is bounded from below provided that $t$ is larger than some fixed constant. It follows again that $h_{1,T}(t) \gg 1$.
\end{proof}

We also require the following bound, which arises from the Kloosterman term in the pre-Kuznetsov formula \eqref{pre-Kuznetsov}.

\begin{lemma}
For $\kappa \in \{0,1\}$ and $T > 0$, we have the bound
\begin{equation}\label{uniformIkint}
\int_{0}^{T} r I_{\kappa}(a,r) \, dr \ll \begin{dcases*}
\sqrt{a} & if $a \geq 1$,	\\
a\left(1 + \log \frac{1}{a}\right) & if $0 < a < 1$
\end{dcases*}
\end{equation}
uniformly in $T$.
\end{lemma}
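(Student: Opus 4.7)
My plan is to reduce the bound to a uniform-in-$T$ estimate on an inner Bessel-type integral, then to evaluate the remaining contour integral defining $I_\kappa$.

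By Fubini (the double integrand is absolutely integrable on $[0, T]$ times the parameterized semicircle $\zeta = e^{i\theta}$, $\theta \in [-\pi/2, \pi/2]$), I would first swap the $r$- and $\zeta$-integrals:
\[\int_0^T r\, I_\kappa(a, r)\, dr = -2a \int_{-i}^{i} (-i\zeta)^{\kappa - 1} F_T(\zeta a)\, d\zeta, \qquad F_T(z) \defeq \int_0^T r\, K_{2ir}(z)\, dr.\]
For the inner integral, using the Fourier representation $K_{2ir}(z) = \int_0^\infty e^{-z\cosh u}\cos(2ru)\, du$ (valid for $\Re z > 0$), the identity $r\cos(2ru) = \tfrac{1}{2}\partial_u \sin(2ru)$, and integration by parts in $u$ (boundary terms vanish at $u = 0$ and $u = \infty$), I would derive
\[F_T(z) = \frac{z}{4}\int_0^\infty e^{-z\cosh u}\, \frac{\sinh u\,(1 - \cos 2Tu)}{u}\, du.\]
This makes the uniform-in-$T$ nature of $F_T$ transparent: by Riemann--Lebesgue the $\cos 2Tu$ term vanishes in the limit, so $F_T(z)$ converges to a finite quantity depending only on $z$ (and for $\Re z > 0$ the trivial bound $|1 - \cos 2Tu| \leq 2$ already suffices).

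Substituting $z = a e^{i\theta}$ and bounding the resulting double integral, I would consider two regimes. For $a \geq 1$, stationary-phase-type analysis of $\int_0^\infty e^{-a\cos\theta\cosh u}\sinh u/u\, du$ shows concentration in the band $a\cos\theta \cosh u \asymp 1$; after integrating over $\theta$ the total contribution is $\asymp a^{-3/2}$, and multiplication by the $a^2$ prefactor produces the bound $\sqrt{a}$. For $0 < a < 1$, the dominant contribution comes from the region $a\cos\theta\cosh u \leq 1$, on which $\int_0^{\log(1/(a\cos\theta))}\sinh u/u\, du$ contributes the factor $\log(1/a)$ after integration in $\theta$, leading to the bound $a(1 + \log(1/a))$.

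The main obstacle is the delicate behavior at the endpoints $\zeta = \pm i$, where $\Re z \to 0$ and the naive bound for the $u$-integral produces a divergent $\theta$-integral; there one must exploit the oscillation of $(1 - \cos 2Tu)$ against the now-unimodular $e^{-z\cosh u}$ to restore absolute convergence. An alternative robust route is via the Mellin--Barnes representation
\[K_{2ir}(z) = \frac{1}{2\pi i}\int_{(\sigma)} 2^{s - 2}\,\Gamma\!\left(\tfrac{s}{2} + ir\right)\Gamma\!\left(\tfrac{s}{2} - ir\right) z^{-s}\, ds, \qquad \sigma > 0,\]
which decouples the $r$-, $\zeta$-, and $s$-integrals into Mellin transforms of elementary functions; the two-regime bound then follows from shifting the Mellin contour to $\sigma = 1/2$ in the regime $a \geq 1$ and to a small positive $\sigma$ in the regime $0 < a < 1$, with the logarithm arising from the $s \to 0^+$ behavior of the $r$-integrated Mellin kernel $\Phi_T(s) \defeq \int_0^T r\,\Gamma(s/2 + ir)\Gamma(s/2 - ir)\, dr$.
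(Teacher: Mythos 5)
Your derivation of the identity
\[\int_0^T r\,I_\kappa(a,r)\,dr = -2a\int_{-i}^{i}(-i\zeta)^{\kappa-1}F_T(\zeta a)\,d\zeta,\qquad F_T(z)=\frac{z}{4}\int_0^\infty e^{-z\cosh u}\,\frac{\sinh u\,(1-\cos 2Tu)}{u}\,du,\]
is correct and is essentially the paper's starting point: the paper uses exactly the representation $K_{2ir}(\zeta)=\int_0^\infty e^{-\zeta\cosh\xi}\cos 2r\xi\,d\xi$ and an integration by parts for $\kappa=1$, and for $\kappa=0$ this reduces (after evaluating the $\zeta$-integral exactly) to Kuznetsov's formula $a\int_0^\infty\frac{\tanh\xi}{\xi}(1-\cos 2T\xi)\sin(a\cosh\xi)\,d\xi$, which the paper cites. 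The gap is in the estimation phase. You propose to substitute $z=ae^{i\theta}$ and bound the double integral with $|1-\cos 2Tu|\le 2$, i.e.\ to take absolute values inside the $\theta$-integral. This discards the factor $e^{-ia\sin\theta\cosh u}$, and it is precisely the exact $\theta$-integration that produces the oscillatory factor $\sin(a\cosh u)/(a\cosh u)$; the bound $\sqrt a$ for $a\ge 1$ comes from stationary phase applied to that oscillation in $u$ (the Fresnel-type contribution near $u=0$, where the phase $a\cosh u$ has vanishing derivative, gives $a\cdot a^{-1/2}$). Once you take absolute values in $\theta$ you are left with $a^2\int_{-\pi/2}^{\pi/2}\int_0^\infty e^{-a\cos\theta\cosh u}\frac{\sinh u}{u}\,du\,d\theta$, and this is \emph{not} $\ll\sqrt a$: performing the $\theta$-integral first gives $\int_{-\pi/2}^{\pi/2}e^{-a\cos\theta\cosh u}\,d\theta\asymp (a\cosh u)^{-1}$ for $a\ge1$, so the bound collapses to $a\int_0^\infty\frac{\tanh u}{u}\,du$, which diverges; even restricting to the range where convergence holds, the $\theta$-integral of the Laplace-type $u$-integral is of size at least $1/a$ (the window $a\cos\theta\asymp1$ alone contributes that much), not the $a^{-3/2}$ you claim, so the method cannot do better than $O(a)$ up to logarithms. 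In other words, the cancellation you defer to the ``obstacle'' at $\zeta=\pm i$ is not a boundary technicality: it is the whole content of the estimate, for all $\theta$.

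Two further quantitative claims are off. For $0<a<1$ you assert that $\int_0^{\log(1/(a\cos\theta))}\frac{\sinh u}{u}\,du$ contributes a factor $\log(1/a)$; in fact $\sinh u/u$ grows exponentially, so this integral is $\asymp \bigl(a\cos\theta\,\log\frac{1}{a\cos\theta}\bigr)^{-1}$, and the subsequent $\theta$-integral again diverges at the endpoints. The correct route (as in the paper) is to use the kernel $\frac{\tanh u}{u}$ obtained after the exact $\zeta$-integration together with $|\sin x|\le\min\{1,|x|\}$, plus either absolute convergence (the extra $1/\cosh u$ in the second $\kappa=1$ term) or stationary phase for the tail. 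Finally, the Mellin--Barnes alternative does not obviously repair this: the $\zeta$-integral of $\zeta^{-s}$ over the semicircle produces a factor of size $e^{\pi|\Im s|/2}$ (e.g.\ $\sin(\pi s/2)/s$ for $\kappa=0$), which exactly cancels the exponential decay of $\Gamma(\frac{s}{2}+ir)\Gamma(\frac{s}{2}-ir)$ after the $r$-integration, so the shifted contour integral is not absolutely convergent without exploiting further cancellation in $\Phi_T$. To complete the proof you should follow the paper: evaluate the $\zeta$- and $r$-integrals exactly to reach the $(1-\cos 2T\xi)\sin(a\cosh\xi)$ (and, for $\kappa=1$, $\cos(a\cosh\xi)$) form, and then apply stationary phase as in Kuznetsov's Equations (5.13)--(5.14), treating the absolutely convergent secondary term separately.
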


\begin{proof}
From \cite[Equation (5.13)]{Kuz}, we have that
\[\int_{0}^{T} r I_0(a,r) \, dr = a \int_{0}^{\infty} \frac{\tanh \xi}{\xi} (1 - \cos 2T \xi) \sin(a \cosh \xi) \, d\xi.\]
Similarly, using the fact that
\[K_{2ir}(\zeta) = \int_{0}^{\infty} e^{-\zeta \cosh \xi} \cos 2r \xi \, d\xi\]
for $r \in \R$ and $\Re(\zeta) > 0$ from \cite[8.432.1]{GR}, we have that
\[\int_{0}^{T} r I_1(a,r) \, dr = -2a \int_{0}^{\infty} \int_{0}^{T} r \cos 2r \xi \, dr \int_{-i}^{i} e^{-\zeta a \cosh \xi} \, d\zeta \, d\xi.\]
Evaluating each of the inner integrals and then integrating by parts, we find that
\begin{multline*}
\int_{0}^{T} r I_1(a,r) \, dr = i a \int_{0}^{\infty} \frac{\tanh \xi}{\xi} (1 - \cos 2T \xi) \cos(a \cosh \xi) \, d\xi	\\
- i \int_{0}^{\infty} \frac{\tanh \xi}{\xi} (1 - \cos 2T \xi) \frac{\sin(a \cosh \xi)}{\cosh \xi} \, d\xi.
\end{multline*}
From here, one can show via stationary phase on subintervals of $(0,\infty)$ that $\int_{0}^{T} r I_0(a,r) \, dr$ and the first term in the above expression for $\int_{0}^{T} r I_1(a,r) \, dr$ both are bounded by a constant multiple of
\[\begin{dcases*}
\sqrt{a} & if $a \geq 1$,	\\
a\left(1 + \log \frac{1}{a}\right) & if $0 < a < 1$;
\end{dcases*}\]
see \cite[Equation (5.14)]{Kuz}. The second term in the expression for $\int_{0}^{T} r I_1(a,r) \, dr$ is uniformly bounded for $a \geq 1$, so we need only consider when $0 < a < 1$. In this case, the fact that $|\sin x| \leq \min\{1,|x|\}$ for $x \in \R$ implies that this is bounded by
\[2a \int_{0}^{\log \frac{1}{a}} \frac{\tanh \xi}{\xi} \, d\xi + 2 \int_{\log \frac{1}{a}}^{\infty} \frac{\tanh \xi}{\xi} \frac{1}{\cosh \xi} \, d\xi	\\
\ll a \left(1 + \log \frac{1}{a}\right).\qedhere\]
\end{proof}

\section{Sarnak's Density Theorem for Exceptional Hecke Eigenvalues}

We are now in a position to prove \hyperref[Sarnakthm]{Theorem \ref*{Sarnakthm}}.

\begin{proof}[Proof of {\eqref{SarnakGamma1}}]
By Rankin's trick,
\begin{multline*}
\#\left\{f \in \BB_{\kappa}\left(\Gamma_1(q)\right) : t_f \in [0,T], \ \left|\lambda_f(p)\right| \geq \alpha_p \text{ for all $p \in \PP$}\right\}	\\
\leq \prod_{p \in \PP} \alpha_p^{-2\ell_p} \sum_{\substack{f \in \BB_{\kappa}\left(\Gamma_1(q)\right) \\ t_f \in [0,T]}} \prod_{p \in \PP} \left|\lambda_f(p)\right|^{2\ell_p}
\end{multline*}
for any nonnegative integers $\ell_p$ to be chosen. Using the explicit basis \eqref{basisGamma1(q)} of $\AA_{\kappa}\left(\Gamma_1(q)\right)$ together with the lower bound \eqref{rho1lowerbound} for $\nu_f$,
\begin{align*}
\hspace{2cm} & \hspace{-2cm} \sum_{\substack{f \in \BB_{\kappa}\left(\Gamma_1(q)\right) \\ t_f \in [0,T]}} \prod_{p \in \PP} \left|\lambda_f(p)\right|^{2\ell_p}	\\
& = \sum_{\substack{\chi \hspace{-.25cm} \pmod{q} \\ \chi(-1) = (-1)^{\kappa}}} \sum_{\substack{q_1 q_2 = q \\ q_1 \equiv 0 \hspace{-.25cm} \pmod{q_{\chi}}}} \sum_{\substack{f \in \BB_{\kappa}^{\ast}\left(q_1,\chi\right) \\ t_f \in [0,T]}} \tau(q_2) \prod_{p \in \PP} \left|\lambda_f(p)\right|^{2\ell_p}	\\
& \ll_{\e} q^{1 + \e} T^{\e} \sum_{\substack{\chi \hspace{-.25cm} \pmod{q} \\ \chi(-1) = (-1)^{\kappa}}} \sum_{\substack{q_1 q_2 = q \\ q_1 \equiv 0 \hspace{-.25cm} \pmod{q_{\chi}}}} \sum_{\substack{f \in \BB_{\kappa}^{\ast}\left(q_1,\chi\right) \\ t_f \in [0,T]}} \xi_f \nu_f \prod_{p \in \PP} \left|\lambda_f(p)\right|^{2\ell_p}.
\end{align*}

We take $m = 1$ and $n = \prod_{p \in \PP} p^{2j_p}$ in the pre-Kuznetsov formula \eqref{pre-Kuznetsov}, multiply both sides by $\prod_{p \in \PP} \alpha_{2j_p,2\ell_p} \overline{\chi}(p)^{j_p}$, and sum over all $0 \leq j_p \leq \ell_p$, over all $p \in \PP$, and over all Dirichlet characters $\chi$ modulo $q$ satisfying $\chi(-1) = (-1)^{\kappa}$. We then multiply both sides by $\pi^2 r \left|\Gamma\left(1 - \frac{\kappa}{2} + ir\right)\right|^{-2}$ and integrate both sides with respect to $r$ from $0$ to $T$. 

On the spectral side, \eqref{Eisensteinmult}, \eqref{EisensteinHecke2ell}, and \hyperref[hkTlemma]{Lemma \ref*{hkTlemma}} allow us to use positivity to discard the contribution from the continuous spectrum, while we may discard the contribution of the discrete spectrum with $t \notin [0,T]$ via \eqref{cuspmult}, \eqref{cuspHecke2ell}, and \hyperref[hkTlemma]{Lemma \ref*{hkTlemma}}, so that the spectral side is bounded from below by a constant multiple of
\[\sum_{\substack{\chi \hspace{-.25cm} \pmod{q} \\ \chi(-1) = (-1)^{\kappa}}} \sum_{\substack{q_1 q_2 = q \\ q_1 \equiv 0 \hspace{-.25cm} \pmod{q_{\chi}}}} \sum_{\substack{f \in \BB_{\kappa}^{\ast}\left(q_1,\chi\right) \\ t_f \in [0,T]}} \xi_f \nu_f \prod_{p \in \PP} \left|\lambda_f(p)\right|^{2\ell_p}.\]
On the geometric side, we only pick up the delta term when $j_p = 0$ for all $p \in \PP$, in which case the term is bounded by a constant multiple of $q T^2 \prod_{p \in \PP} \alpha_{0,2\ell_p}$. For $\kappa = 0$, we use \eqref{sumKloosterman} to write the Kloosterman term in the form
\begin{multline*}
\frac{\varphi(q)}{\pi} \sum_{\substack{j_p = 0 \\ p \in \PP}}^{\ell_p} \prod_{p \in \PP} \alpha_{2j_p,2\ell_p} \sum_{\substack{c = 1 \\ c \equiv 0 \hspace{-.25cm} \pmod{q}}}^{\infty} \frac{\Re\left(S_{\prod_{p \in \PP} p^{j_p} (q)}\left(1,\prod_{p \in \PP} p^{2j_p};c\right)\right)}{c}	\\
\times \int_{0}^{T} r I_0\left(\frac{4\pi \prod_{p \in \PP} p^{j_p}}{c},r\right) \, dr.
\end{multline*}
For $\kappa = 1$, the Kloosterman term is the same except with $i\Im$ in place of $\Re$ and $I_1$ in place of $I_0$. In either case, we bound the integral via \eqref{uniformIkint}, which allows us to use \eqref{Gamma1(q)cleq} and \eqref{Gamma1(q)cgeq} to bound the summation over $c$, so that the Kloosterman term is bounded by a constant multiple of
\[\frac{1}{\sqrt{q}} \prod_{p' \mid q} \frac{1}{1 - {p'}^{-1/2}} \sum_{\substack{j_p = 0 \\ p \in \PP}}^{\ell_p} \prod_{p \in \PP} \alpha_{2j_p,2\ell_p} p^{j_p/2} \left(\log \left(\prod_{p \in \PP} p^{2j_p} + 1\right)\right)^2.\]
We bound the summation over $j_p$ and over $p \in \PP$ via \eqref{sumalphabound}, thereby obtaining
\begin{multline*}
\#\left\{f \in \BB_{\kappa}\left(\Gamma_1(q)\right) : t_f \in [0,T], \ \left|\lambda_f(p)\right| \geq \alpha_p \text{ for all $p \in \PP$}\right\}	\\
\ll_{\e} q^{1 + \e} T^{\e} \prod_{p \in \PP} \left(\frac{\alpha_p}{2}\right)^{-2\ell_p} \left(qT^2 + \frac{\prod_{p \in \PP} p^{\ell_p/2} \left(\log \prod_{p \in \PP} p^{\ell_p/2}\right)^2}{\sqrt{q}} \prod_{p' \mid q} \frac{1}{1 - {p'}^{-1/2}}\right).
\end{multline*}
It remains to take
\[\ell_p = \left\lfloor \frac{\mu_p \log \left(\vol\left(\Gamma_1(q) \backslash \Hb\right)^{3/2} T^4\right)}{\log p} \right\rfloor.\qedhere\]
\end{proof}

\begin{proof}[Proof of {\eqref{SarnakGamma}}]
We use \eqref{basisGamma(q)}, \eqref{Gamma(q)cleq}, and \eqref{Gamma(q)cgeq} in place of \eqref{basisGamma1(q)}, \eqref{Gamma1(q)cleq}, and \eqref{Gamma1(q)cgeq}, thereby finding that
\begin{multline*}
\#\left\{f \in \BB_{\kappa}\left(\Gamma(q)\right) : t_f \in [0,T], \ \left|\lambda_f(p)\right| \geq \alpha_p \text{ for all $p \in \PP$}\right\}	\\
\leq \prod_{p \in \PP} \alpha_p^{-2\ell_p} \sum_{\substack{f \in \BB_{\kappa}\left(\Gamma(q)\right) \\ t_f \in [0,T]}} \prod_{p \in \PP} \left|\lambda_f(p)\right|^{2\ell_p},
\end{multline*}
with
\begin{align*}
& \sum_{\substack{f \in \BB_{\kappa}\left(\Gamma(q)\right) \\ t_f \in [0,T]}} \prod_{p \in \PP} \left|\lambda_f(p)\right|^{2\ell_p}	\\
& = \sum_{\substack{\chi \hspace{-.25cm} \pmod{q} \\ \chi(-1) = (-1)^{\kappa}}} \sum_{\substack{q_1 q_2 = q^2 \\ q_1 \equiv 0 \hspace{-.25cm} \pmod{q_{\chi}}}} \sum_{\substack{f \in \BB_{\kappa}^{\ast}\left(q_1,\chi\right) \\ t_f \in [0,T]}} \tau(q_2) \prod_{p \in \PP} \left|\lambda_f(p)\right|^{2\ell_p}	\\
& \ll_{\e} q^{2 + \e} T^{\e} \prod_{p \in \PP} 2^{2\ell_p} \left(qT^2 + \frac{\prod_{p \in \PP} p^{\ell_p/2} \left(\log \prod_{p \in \PP} p^{\ell_p/2}\right)^2}{q} \prod_{p' \mid q} \frac{1}{1 - {p'}^{-1/2}}\right).
\end{align*}
Taking
\[\ell_p = \left\lfloor \frac{\mu_p \log \left(\vol\left(\Gamma(q) \backslash \Hb\right)^{4/3} T^4\right)}{\log p} \right\rfloor\]
completes the proof.
\end{proof}

\begin{proof}[Proof of {\eqref{SarnakGamma0}}]
Using \eqref{basisqchi}, \eqref{Gamma0(q)cleq}, and \eqref{Gamma0(q)cgeq} in place of \eqref{basisGamma1(q)}, \eqref{Gamma1(q)cleq}, and \eqref{Gamma1(q)cgeq},
\begin{multline*}
\#\left\{f \in \BB_{\kappa}(q,\chi) : t_f \in [0,T], \ \left|\lambda_f(p)\right| \geq \alpha_p \text{ for all $p \in \PP$}\right\}	\\
\ll_{\e} q^{1 + \e} T^{\e} \prod_{p \in \PP} \left(\frac{\alpha_p}{2}\right)^{-2\ell_p} \left(T^2 + \prod_{p \in \PP} p^{\ell_p/2} \left(\log \prod_{p \in \PP} p^{\ell_p/2}\right)^2 \frac{2^{\omega(q)} \dot{Q}}{\varphi(q)}\right).
\end{multline*}
Upon taking
\[\ell_p = \left\lfloor \frac{\mu_p \log \left(\vol\left(\Gamma_0(q) \backslash \Hb\right)^2 T^4 \dot{Q}^{-2} \right)}{\log p} \right\rfloor,\]
we conclude that
\begin{multline}\label{SarnakGamma0(q)dotq}
\#\left\{f \in \BB_{\kappa}(q,\chi) : t_f \in [0,T], \ \left|\lambda_f(p)\right| \geq \alpha_p \text{ for all $p \in \PP$}\right\}	\\
\ll_{\e} \left(\vol(\Gamma_0(q) \backslash \Hb) T^2\right)^{1 - 4 \sum_{p \in \PP} \mu_p \frac{\log \alpha_p/2}{\log p} + \e} \dot{Q}^{4 \sum_{p \in \PP} \mu_p \frac{\log \alpha_p/2}{\log p}}.
\end{multline}

On the other hand, by the inclusion $\AA_{\kappa}(q,\chi) \subset \AA_{\kappa}(q \ddot{Q},\chi)$,
\begin{multline*}
\#\left\{f \in \BB_{\kappa}(q,\chi) : t_f \in [0,T], \ \left|\lambda_f(p)\right| \geq \alpha_p \text{ for all $p \in \PP$}\right\}	\\
\leq \#\left\{f \in \BB_{\kappa}(q \ddot{Q},\chi) : t_f \in [0,T], \ \left|\lambda_f(p)\right| \geq \alpha_p \text{ for all $p \in \PP$}\right\}.
\end{multline*}
Since $q_{\chi \psi^2} \mid q_{\chi}$, we have that $\dot{Q}(q \ddot{Q}, q_{\chi \psi^2}) = 1$. Consequently, \eqref{SarnakGamma0(q)dotq} yields the bound
\begin{multline*}
\#\left\{f \in \BB_{\kappa}(q,\chi) : t_f \in [0,T], \ \left|\lambda_f(p)\right| \geq \alpha_p \text{ for all $p \in \PP$}\right\}	\\
\ll_{\e} \left(\vol(\Gamma_0(q \ddot{Q}) \backslash \Hb) T^2\right)^{1 - 4 \sum_{p \in \PP} \mu_p \frac{\log \alpha_p/2}{\log p} + \e}	\\
\ll_{\e} \left(\vol(\Gamma_0(q) \backslash \Hb) T^2\right)^{1 - 4 \sum_{p \in \PP} \mu_p \frac{\log \alpha_p/2}{\log p} + \e} \ddot{Q}^{1 - 4 \sum_{p \in \PP} \mu_p \frac{\log \alpha_p/2}{\log p}}.
\qedhere
\end{multline*}
\end{proof}

\begin{remark}
Should we wish to improve \eqref{SarnakGamma0} to be uniform in $\PP$, then one needs to take into account the fact that
\begin{multline*}
\prod_{p \in \PP} \left(\frac{\alpha_p}{2}\right)^{-2\ell_p} = \left(\vol(\Gamma_0(q) \backslash \Hb) T^2\right)^{- 4 \sum_{p \in \PP} \mu_p \frac{\log \alpha_p/2}{\log p} + \e} \dot{Q}^{4 \sum_{p \in \PP} \mu_p \frac{\log \alpha_p/2}{\log p}}	\\
\times \prod_{p \in \PP} \left(\frac{\alpha_p}{2}\right)^{2 \left\{\frac{\mu_p \log \left(\vol\left(\Gamma_0(q) \backslash \Hb\right)^2 T^4 \dot{Q}^{-2} \right)}{\log p}\right\}},
\end{multline*}
where $\{x\}$ denotes the fractional part of $x$, and the last term need not necessarily be $\ll_{\e} \left(\vol(\Gamma_0(q) \backslash \Hb) T^2\right)^{\e}$. For this reason, \cite[Proposition 1]{BBR} is not correct in the generality in which it is stated, namely the claim that the result is uniform for $T > p$. Instead, one requires that $p \ll_{\e} T^{\e}$.
\end{remark}

\section{Huxley's Density Theorem for Exceptional Laplacian Eigenvalues}

\hyperref[Huxleythm]{Theorem \ref*{Huxleythm}} is proved similarly to \hyperref[Sarnakthm]{Theorem \ref*{Sarnakthm}}, though we use the Kuznetsov formula \eqref{Kuznetsov} with a carefully chosen test function in place of the pre-Kuznetsov formula \eqref{pre-Kuznetsov}, and we require different methods to bound the Kloosterman term.

\begin{proof}[Proof of {\eqref{HuxleyGamma1}}]
We again use Rankin's trick with nonnegative integers $\ell_p$ and a positive real number $X \geq 1$ to be chosen:
\begin{multline*}
\#\left\{f \in \BB_0\left(\Gamma_1(q)\right) : it_f \in (\alpha_0,1/2), \ \left|\lambda_f(p)\right| \geq \alpha_p \text{ for all $p \in \PP$}\right\}	\\
\leq X^{-2\alpha_0} \prod_{p \in \PP} \alpha_p^{-2\ell_p} \sum_{\substack{f \in \BB_0\left(\Gamma_1(q)\right) \\ it_f \in (0,1/2)}} X^{2it_f} \prod_{p \in \PP} \left|\lambda_f(p)\right|^{2\ell_p}.
\end{multline*}
Again using \eqref{basisGamma1(q)} and \eqref{rho1lowerbound},
\begin{multline*}
\sum_{\substack{f \in \BB_0\left(\Gamma_1(q)\right) \\ it_f \in (0,1/2)}} X^{2it_f} \prod_{p \in \PP} \left|\lambda_f(p)\right|^{2\ell_p}	\\
= \sum_{\substack{\chi \hspace{-.25cm} \pmod{q} \\ \chi(-1) = 1}} \sum_{\substack{q_1 q_2 = q \\ q_1 \equiv 0 \hspace{-.25cm} \pmod{q_{\chi}}}} \sum_{\substack{f \in \BB_0^{\ast}\left(q_1,\chi\right) \\ it_f \in (0,1/2)}} \tau(q_2) X^{2it_f} \prod_{p \in \PP} \left|\lambda_f(p)\right|^{2\ell_p}	\\
\ll_{\e} q^{1 + \e} \sum_{\substack{\chi \hspace{-.25cm} \pmod{q} \\ \chi(-1) = 1}} \sum_{\substack{q_1 q_2 = q \\ q_1 \equiv 0 \hspace{-.25cm} \pmod{q_{\chi}}}} \sum_{\substack{f \in \BB_0^{\ast}\left(q_1,\chi\right) \\ it_f \in (0,1/2)}} \xi_f \nu_f X^{2it_f} \prod_{p \in \PP} \left|\lambda_f(p)\right|^{2\ell_p}	\\
\end{multline*}

We take $m = 1$, $n = \prod_{p \in \PP} p^{2j_p}$, and
\[h(t) = h_X(t) = \left(\frac{X^{it} + X^{-it}}{t^2 + 1}\right)^2\]
in the Kuznetsov formula \eqref{Kuznetsov}, multiply both sides by $\prod_{p \in \PP} \alpha_{2j_p,2\ell_p} \overline{\chi}(p)^{j_p}$, and sum over all $0 \leq j_p \leq \ell_p$, over all $p \in \PP$, and over all even Dirichlet characters modulo $q$. On the spectral side, we discard all but the discrete spectrum for which $it_f \in (0,1/2)$ via positivity, so that the spectral side is bounded from below by a constant multiple of
\[\sum_{\substack{\chi \hspace{-.25cm} \pmod{q} \\ \chi(-1) = 1}} \sum_{\substack{q_1 q_2 = q \\ q_1 \equiv 0 \hspace{-.25cm} \pmod{q_{\chi}}}} \sum_{\substack{f \in \BB_0^{\ast}\left(q_1,\chi\right) \\ it_f \in (0,1/2)}} \xi_f \nu_f X^{2it_f} \prod_{p \in \PP} \left|\lambda_f(p)\right|^{2\ell_p}.\]
We only pick up the delta term on the geometric side when $j_p = 0$ for all $p \in \PP$, in which case the term is bounded by a constant multiple of $q \prod_{p \in \PP} 2^{2\ell_p}$. We write the Kloosterman term in the form
\begin{multline*}
\frac{\varphi(q)}{2\pi i} \sum_{\substack{j_p = 0 \\ p \in \PP}}^{\ell_p} \prod_{p \in \PP} \alpha_{2j_p,2\ell_p} \int_{\sigma - i\infty}^{\sigma + i\infty} \sum_{\substack{c = 1 \\ c \equiv 0 \hspace{-.25cm} \pmod{q}}}^{\infty} \frac{\Re\left(S_{\prod_{p \in \PP} p^{j_p}(q)}\left(1,\prod_{p \in \PP} p^{2j_p};c\right)\right)}{c}	\\
\times J_s\left(\frac{4\pi \prod_{p \in \PP} p^{j_p}}{c}\right) \frac{s h_X\left(\frac{is}{2}\right)}{\cos \frac{\pi s}{2}} \, ds
\end{multline*}
for any $1/2 < \sigma < 1$. We have, via \cite[8.411.4]{GR}, the bound
\[J_s(x) \ll \frac{x^{\sigma}}{\left|\Gamma\left(s + \frac{1}{2}\right)\right|} \ll e^{\pi|s|/2} \left(\frac{x}{|s|}\right)^{\sigma},\]
and so the integral in the Kloosterman term is bounded by a constant multiple of
\[\prod_{p \in \PP} p^{j_p \sigma} \sum_{\substack{c = 1 \\ c \equiv 0 \hspace{-.25cm} \pmod{q}}}^{\infty} \frac{\left|S_{\prod_{p \in \PP} p^{j_p} (q)}\left(1,\prod_{p \in \PP} p^{2j_p};c\right)\right|}{c^{1 + \sigma}} \int_{\sigma/2 - i\infty}^{\sigma/2 + i\infty} \left|r^{3/4} h_X(ir)\right| \, dr.\]
We take
\[\sigma = \frac{1}{2} + \frac{1}{\log \left(X \prod_{p \in \PP} p^{\ell_p}\right)},\]
so that the integral is bounded by a constant multiple of $\sqrt{X}$, and use \eqref{Gamma1(q)csigma} to bound the summation over $c$ and \eqref{sumalphabound} to bound the summation over $j_p$ and $p \in \PP$ in order to find that
\begin{align*}
& \#\left\{f \in \BB_0\left(\Gamma_1(q)\right) : it_f \in (\alpha_0,1/2), \ \left|\lambda_f(p)\right| \geq \alpha_p \text{ for all $p \in \PP$}\right\}	\\
& \qquad\ll_{\e} q^{1 + \e} X^{-2\alpha_0} \prod_{p \in \PP} \left(\frac{\alpha_p}{2}\right)^{-2\ell_p}	\\
& \hspace{2.5cm} \times \left(q + \sqrt{X} \prod_{p \in \PP} p^{\ell_p/2} \left(\log \left(X \prod_{p \in \PP} p^{\ell_p}\right)\right)^2 \frac{1}{\sqrt{q}} \prod_{p' \mid q} \frac{1}{1 - {p'}^{-1/2}}\right).
\end{align*}
The result follows upon taking
\[X = \vol\left(\Gamma_1(q) \backslash \Hb\right)^{3\mu_0/2}, \qquad \ell_p = \left\lfloor \frac{\mu_p \log \vol\left(\Gamma_1(q) \backslash \Hb\right)^{3/2}}{\log p} \right\rfloor.\qedhere\]
\end{proof}

\begin{proof}[Proof of {\eqref{HuxleyGamma}}]
By using \eqref{basisGamma(q)} and \eqref{Gamma(q)csigma} in place of \eqref{basisGamma1(q)} and \eqref{Gamma1(q)csigma}, we obtain
\begin{align*}
& \#\left\{f \in \BB_0\left(\Gamma(q)\right) : it_f \in (\alpha_0,1/2), \ \left|\lambda_f(p)\right| \geq \alpha_p \text{ for all $p \in \PP$}\right\}	\\
& \qquad\ll_{\e} q^{2 + \e} X^{-2\alpha_0} \prod_{p \in \PP} \left(\frac{\alpha_p}{2}\right)^{-2\ell_p}	\\
& \hspace{2.5cm} \times \left(q + \sqrt{X} \prod_{p \in \PP} p^{\ell_p/2} \left(\log \left(X \prod_{p \in \PP} p^{\ell_p}\right)\right)^2 \frac{1}{q} \prod_{p' \mid q} \frac{1}{1 - {p'}^{-1/2}}\right),
\end{align*}
and it remains to take
\[X = \vol\left(\Gamma(q) \backslash \Hb\right)^{4\mu_0/3}, \qquad \ell_p = \left\lfloor \frac{\mu_p \log \vol\left(\Gamma(q) \backslash \Hb\right)^{4/3}}{\log p} \right\rfloor.\qedhere\]
\end{proof}

\begin{proof}[Proof of {\eqref{HuxleyGamma0}}]
We use \eqref{basisqchi} and \eqref{Gamma0(q)csigma} in place of \eqref{basisGamma1(q)} and \eqref{Gamma1(q)csigma}, so that
\begin{multline*}
\#\left\{f \in \BB_0(q,\chi) : it_f \in (\alpha_0,1/2), \ \left|\lambda_f(p)\right| \geq \alpha_p \text{ for all $p \in \PP$}\right\}	\\
\ll_{\e} q^{1 + \e} X^{-2\alpha_0} \prod_{p \in \PP} \left(\frac{\alpha_p}{2}\right)^{-2\ell_p} \left(1 + \sqrt{X} \prod_{p \in \PP} p^{\ell_p/2} \left(\log \left(X \prod_{p \in \PP} p^{\ell_p}\right)\right)^2 \frac{2^{\omega(q)} \dot{Q}}{\varphi(q)}\right).
\end{multline*}
We find that
\begin{multline*}
\#\left\{f \in \BB_0(q,\chi) : it_f \in (\alpha_0,1/2), \ \left|\lambda_f(p)\right| \geq \alpha_p \text{ for all $p \in \PP$}\right\}	\\
\ll_{\e} \vol(\Gamma_0(q) \backslash \Hb)^{1 - 4 \left(\mu_0 \alpha_0 + \sum_{p \in \PP} \mu_p \frac{\log \alpha_p/2}{\log p}\right) + \e} \dot{Q}^{4 \left(\mu_0 \alpha_0 + \sum_{p \in \PP} \mu_p \frac{\log \alpha_p/2}{\log p}\right)}.
\end{multline*}
 by taking
\[X = \vol\left(\Gamma_0(q) \backslash \Hb\right)^{2\mu_0} \dot{Q}^{-2\mu_0}, \qquad \ell_p = \left\lfloor \frac{\mu_p \log \left(\vol\left(\Gamma_0(q) \backslash \Hb\right)^2 \dot{Q}^{-2}\right)}{\log p} \right\rfloor.\]
Again, we also have that
\begin{multline*}
\#\left\{f \in \BB_0(q,\chi) : it_f \in (\alpha_0,1/2), \ \left|\lambda_f(p)\right| \geq \alpha_p \text{ for all $p \in \PP$}\right\}	\\
\leq \#\left\{f \in \BB_0(q \ddot{Q},\chi \psi^2) : it_f \in (\alpha_0,1/2), \ \left|\lambda_f(p)\right| \geq \alpha_p \text{ for all $p \in \PP$}\right\}
\end{multline*}
for any primitive character $\psi$ modulo $\ddot{Q}$, which implies that
\begin{multline*}
\#\left\{f \in \BB_0(q,\chi) : it_f \in (\alpha_0,1/2) \in [0,T], \ \left|\lambda_f(p)\right| \geq \alpha_p \text{ for all $p \in \PP$}\right\}	\\
\ll_{\e} \vol(\Gamma_0(q) \backslash \Hb)^{1 - 4 \left(\mu_0 \alpha_0 + \sum_{p \in \PP} \mu_p \frac{\log \alpha_p/2}{\log p}\right) + \e} \ddot{Q}^{1 - 4 \left(\mu_0 \alpha_0 + \sum_{p \in \PP} \mu_p \frac{\log \alpha_p/2}{\log p}\right)}.
\qedhere
\end{multline*}
\end{proof}

\section{Improving \texorpdfstring{\hyperref[Sarnakthm]{Theorems \ref*{Sarnakthm}}}{Theorems \ref{Sarnakthm}} and \ref{Huxleythm} for \texorpdfstring{$\Gamma_1(q)$}{\83\223 \9040\201(q)} via Twisting}

In this section, we prove \hyperref[improvedthm]{Theorem \ref*{improvedthm}}. Let $f \in \BB_{\kappa}^{\ast}(q,\chi)$ be a newform, and for a primitive character $\psi$ modulo $q_{\psi}$ with $q_{\psi} \mid q$, we let $f \otimes \psi$ denote the twist of $f$ by $\psi$; this is the \emph{newform} whose Hecke eigenvalues $\lambda_{f \otimes \psi}(n)$ are equal to $\lambda_f(n) \psi(n)$ whenever $(n,q) = 1$. By \cite[Proposition 3.1]{AL}, the weight of $f \otimes \psi$ is $\kappa$, the level of $f \otimes \psi$ divides $q^2$, and the nebentypus is the primitive character that induces $\chi \psi^2$. We make crucial use of the fact that twisting by a Dirichlet character preserves the Laplacian eigenvalue $\lambda_f = 1/4 + t_f^2$ and the absolute value $|\lambda_f(n)|$ of the Hecke eigenvalues of $f$ for all $(n,q) = 1$. Moreover, if $f_1 \in \BB_{\kappa}^{\ast}\left(q_1,\chi_1\right)$, $f_2 \in \BB_{\kappa}^{\ast}\left(q_2,\chi_2\right)$ are such that there exist primitive Dirichlet characters $\psi_1$ modulo $q_{\psi_1}$ and $\psi_2$ modulo $q_{\psi_2}$ with $q_{\psi_1}, q_{\psi_2} \mid q$ such that
\[f_1 \otimes \psi_1 = f_2 \otimes \psi_2,\]
then $f_2 = f_1 \otimes \psi_1 \overline{\psi_2}$.

\begin{lemma}\label{twistinglevellemma}
If $q$ is squarefree, $\psi$ is a primitive Dirichlet modulo $q_{\psi}$, where $q_{\psi} \mid q$, and $f \in \BB_{\kappa}^{\ast}(q,\chi)$, then the level of $f \otimes \psi$ divides $q$ if and only if $\overline{\psi}$ divides $\chi$, in the sense that $\psi \chi$ has conductor dividing $q_{\chi}$.
\end{lemma}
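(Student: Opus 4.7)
My plan is to reduce the lemma to the Atkin--Lehner theory of twists of newforms, followed by local conductor computations at primes dividing $q_\psi$. First, via the nebentypus decomposition $\AA_\kappa(\Gamma_1(q)) = \bigoplus_{\chi' \hspace{-.2cm}\pmod{q},\, \chi'(-1) = (-1)^\kappa} \AA_\kappa(q, \chi')$, membership of $f \otimes \psi$ in $\AA_\kappa(\Gamma_1(q))$ translates to invariance of $f \otimes \psi$ under $\Gamma_0(q)$ with nebentypus $\chi\psi^2$ viewed as a character modulo $q$, which is well defined since $q_\chi, q_\psi \mid q$. Expanding $f$ in the basis \eqref{basisqchi} as a combination of shifts $\iota_{\ell, q_1, q} f_0$ of newforms $f_0 \in \BB_\kappa^*(q_1, \chi)$, a direct computation from the definition of the twist gives $(\iota_{\ell, q_1, q} f_0) \otimes \psi = \psi(\ell) \iota_{\ell, q_1, q}(f_0 \otimes \psi)$ whenever $\gcd(\ell, q_\psi) = 1$; this coprimality is automatic under the forward hypothesis since $q$ is squarefree and $q_\psi \mid q_\chi \mid q_1$. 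So the problem reduces to deciding when $f_0 \otimes \psi \in \AA_\kappa(\Gamma_1(q))$ for each newform $f_0$ appearing.

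For such an $f_0 \in \BB_\kappa^*(q_1, \chi)$ with $q_1 \mid q$ squarefree, the twist $f_0 \otimes \psi$ is again a newform up to scalar by \cite{AL}, and its level equals the conductor of the associated automorphic representation $\pi_{f_0} \otimes \psi$; thus $f_0 \otimes \psi \in \AA_\kappa(\Gamma_1(q))$ iff this conductor divides $q$, which I would check prime-by-prime. For $p \nmid q_\psi$, the local conductor equals that of $\pi_{f_0,p}$, which divides $p$ by the squarefreeness of $q_1$, so the entire analysis concentrates at $p \mid q_\psi$. If $p \mid q_\chi$, then $\pi_{f_0,p}$ is a ramified principal series $\pi(\mu_1, \mu_2)$ with $\mu_1$ unramified and $\mu_2|_{\Z_p^\times} = \chi_p$, and a short computation shows that $\pi_{f_0,p} \otimes \psi_p = \pi(\mu_1\psi_p, \mu_2\psi_p)$ has conductor exponent $1 + a(\chi_p\psi_p)$, which equals $1$ precisely when $\chi_p\psi_p$ is trivial on $\Z_p^\times$; if instead $p \nmid q_\chi$, then $\pi_{f_0,p}$ is an unramified twist of Steinberg, and twisting by ramified $\psi_p$ forces conductor at least $p^2$, incompatible with level dividing $q$. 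Translating these prime-local conditions back into a single global statement yields exactly that $\psi\chi$ has conductor dividing $q_\chi$.

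The main obstacle is the local representation-theoretic step: correctly identifying the Langlands parameters of $\pi_{f_0,p}$ at $p \mid q_\psi$ and tracking the conductor exponents under twisting by $\psi_p$, while keeping careful track of central character normalisations and of which of $\mu_1$, $\mu_2$ is the unramified one. A purely classical alternative that bypasses this machinery would compute $(f_0 \otimes \psi)(\gamma z)$ for $\gamma \in \Gamma_1(q)$ directly from the definition, via the matrix decomposition $\begin{pmatrix} 1 & d/q_\psi \\ 0 & 1 \end{pmatrix} \gamma = \gamma' \begin{pmatrix} 1 & e/q_\psi \\ 0 & 1 \end{pmatrix}$ with $\gamma' \in \Gamma_0(q_1)$, tracking the induced map $d \mapsto e$ modulo $q_\psi$ and evaluating the resulting Gauss sum in $\overline\psi(d)$; the cancellations producing a well-defined nebentypus modulo $q$ would then precisely recover the conductor condition, but this approach requires substantially more bookkeeping.
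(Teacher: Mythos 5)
Your proposal is correct and follows essentially the same route as the paper: the paper's proof also passes to the automorphic representation attached to $f$, uses squarefreeness of $q$ to see that each local component at $p \mid q$ is a (possibly ramified) principal series or an unramified twist of Steinberg, and computes the conductor exponent of the local twist by $\psi_p$ to get exactly your dichotomy (exponent $1 + a(\chi_p\psi_p)$ in the principal series case, at least $2$ when a ramified character twists Steinberg). Your additional front-end reduction via the Atkin--Lehner basis and the identity $(\iota_{\ell,q_1,q} f_0)\otimes\psi = \psi(\ell)\,\iota_{\ell,q_1,q}(f_0\otimes\psi)$ is bookkeeping that the paper subsumes in its appeal to the methods of \cite{Hum}, so the two arguments are the same in substance.
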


\begin{proof}
This follows via the methods of \cite{Hum}. For $p \mid q$, let $\pi_p$ be the local component of the cuspidal automorphic representation $\pi$ of $\GL_2(\A_{\Q})$ associated to the newform $f$, so that the central character $\omega_p$ of $\pi_p$ is the local component of the Hecke character $\omega$ that is the id\`{e}lic lift of $\chi$. As $q$ is squarefree, $\pi_p$ is either a principal series representation or a special representation.

In the former case, $\pi_p = \omega_{p,1} \boxplus \omega_{p,2}$ with central character $\omega_p = \omega_{p,1} \omega_{p,2}$, where $\omega_{p,1},\omega_{p,2}$ are characters of $\Q_p^{\times}$ with conductor exponents $c(\omega_{p,1}), c(\omega_{p,2}) \in \{0,1\}$ such that the conductor exponent $c(\pi_p)$ of $\pi_p$ is $c(\omega_{p,1}) + c(\omega_{p,2}) = 1$. The twist $\pi_p \otimes \omega_p'$ of $\pi_p$ by a character $\omega_p'$ of $\Q_p^{\times}$ of conductor exponent $c(\omega_p') \in \{0,1\}$ is $\omega_{p,1} \omega_p' \boxplus \omega_{p,2} \omega_p'$ with corresponding conductor exponent $c(\pi_p \otimes \omega_p') = c(\omega_{p,1} \omega_p') + c(\omega_{p,2} \omega_p')$. For this to be at most $1$, either $\omega_p'$ is unramified, or one of $c(\omega_{p,1} \omega_p'), c(\omega_{p,2} \omega_p')$ must be equal to $0$, so that $\overline{\omega_p'}$ is equal to $\omega_{p,1}$ or $\omega_{p,2}$ up to multiplication by an unramified character.

In the latter case, $\pi_p = \omega_{p,1} \St$ with central character $\omega_p = \omega_{p,1}^2$ such that $c(\omega_{p,1}) = 0$, so that $c(\pi_p) = 1$. The twist of $\pi_p$ by $\omega_p'$ is $\omega_{p,1} \omega_p' \St$, with corresponding conductor exponent $c(\pi_p \otimes \omega_p') = \max\{1, 2c(\omega_{p,1} \omega_p')\}$. For this to be at most $1$, $\omega_p'$ must be unramified.

It follows that if the Hecke character $\omega'$ is the id\`{e}lic lift of $\psi$, then the conductor of $\pi \otimes \omega'$ divides $q$ if and only if the conductor of $\omega' \omega$ divides the conductor of $\omega$.
\end{proof}

From this, we have the following.

\begin{corollary}\label{twistequalcorollary}
Let $q$ be squarefree. Given a newform $g$ of level dividing $q^2$, there exist at most $\tau(q)$ newforms $f$ of level dividing $q$ that can be twisted by a Dirichlet character of conductor dividing $q$ to give $g$.
\end{corollary}

\begin{proof}
Suppose that $f_1 \in \BB_{\kappa}^{\ast}(q_1,\chi_1)$ and $f_2 \in \BB_{\kappa}^{\ast}(q_2,\chi_2)$ with $q_1$ and $q_2$ dividing $q$ are such that there exist Dirichlet characters $\psi_1$ and $\psi_2$ of conductors dividing $q$ for which $f_1 \otimes \psi_1 = f_2 \otimes \psi_2 = g$. Then $f_2 = f_1 \otimes \psi_1 \overline{\psi_2}$, and \hyperref[twistinglevellemma]{Lemma \ref*{twistinglevellemma}} implies that $\overline{\psi_1} \psi_2$ divides $\chi_1$. Since the conductor of $\chi_1$ divides $q_1$, the level of $f_1$, the proof is complete by noting that the number of Dirichlet characters $\psi_2$ modulo $q$ for which this may occur is bounded by the number of divisors of $q$.
\end{proof}

\begin{lemma}
Let $q$ be squarefree, let $\PP$ be a finite collection of primes not dividing $q$, let $E_0$ be a measurable subset of $[0,\infty) \cup i(0,1/2)$, and let $E_p$ be a measurable subset of $[0,\infty)$ for each $p \in \PP$. Then
\begin{multline*}
\#\left\{f \in \BB_{\kappa}\left(\Gamma_1(q)\right) : t_f \in E_0, \ |\lambda_f(p)| \in E_p \text{ for all $p \in \PP$}\right\}	\\
\leq \frac{\tau(q)^2}{\varphi(q)} 
\#\left\{f \in \BB_{\kappa}\left(\Gamma(q)\right) : t_f \in E_0, \ |\lambda_f(p)| \in E_p \text{ for all $p \in \PP$}\right\}.
\end{multline*}
\end{lemma}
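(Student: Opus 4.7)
The plan is a double-counting argument via twists by Dirichlet characters. I will associate to each pair $(f, \psi)$, where $f \in \BB_{\kappa}(\Gamma_1(q))$ satisfies the conditions and $\psi$ is a primitive Dirichlet character of conductor $q_{\psi}$ dividing $q$, the Hecke eigenform $\iota_{q^{-1}}(f \otimes \psi) \in \AA_{\kappa}(\Gamma(q))$, and then bound the fibers of the resulting map.

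By the cited result of Atkin and Li, $f \otimes \psi$ is a nontrivial element of $\AA_{\kappa}(q^2, \chi \psi^2)$, where $\chi$ is the nebentypus of $f$, and so $\iota_{q^{-1}}(f \otimes \psi) \in \AA_{\kappa}(\Gamma(q))$ via the decomposition \eqref{basisGamma(q)}. Twisting preserves $t_f$ and the absolute values $|\lambda_f(p)|$ at primes $p \nmid q q_{\psi}$; since $\PP$ is disjoint from the primes dividing $q$ and $q_{\psi} \mid q$, every $p \in \PP$ falls into this range, so the image satisfies the same eigenvalue conditions. Hence the map lands in the set counted on the right, which has cardinality equal to the right-hand count $N_2$.

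For squarefree $q$, the number of primitive Dirichlet characters of conductor dividing $q$ equals $\sum_{d \mid q} \varphi^{\ast}(d) = \varphi(q)$, so there are $\varphi(q) N_1$ such pairs in total, where $N_1$ is the left-hand count. If $(f_1, \psi_1)$ and $(f_2, \psi_2)$ produce the same eigenform, then by Hecke multiplicity one and injectivity of $\iota_{q^{-1}}$ one has $f_2 = f_1 \otimes \eta$ with $\eta = \psi_1 \overline{\psi_2}$, and both $f_i$ lie in $\BB_{\kappa}(\Gamma_1(q))$. The id\`elic analysis underlying the preceding lemma shows that at each prime $p$ dividing the conductor $q_{\chi_1}$ of the nebentypus of $f_1$, the local character $\eta_p$ is constrained to one of two choices, namely the trivial character or the inverse of the ramified Langlands parameter of $\pi_{f_1, p}$, while $\eta_p$ must be unramified (hence trivial) at every other prime of $q$. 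This yields at most $2^{\omega(q_{\chi_1})} \leq \tau(q)$ admissible $\eta$, so the map has at most $\tau(q)$ preimages over any given eigenform.

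Double-counting then gives $\varphi(q) N_1 \leq \tau(q) N_2$, which rearranges to the claimed inequality. The main obstacle is really the fiber bound: it hinges on the explicit local computation at ramified primes (each such prime of the nebentypus contributing exactly a factor of two to the count of admissible twists), rather than on the weaker global conductor condition, and this is exactly where the factor $\tau(q)$ enters in place of the naive $\varphi(q_{\chi_1})$.
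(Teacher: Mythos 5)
Your proposal is correct and is essentially the paper's own argument: the paper likewise runs over all $\varphi(q)$ twists $\psi$ modulo $q$ (inserting a factor $\frac{1}{\varphi(q)}\sum_{\psi}$ into the count coming from the decomposition \eqref{basisGamma1(q)}), observes that each twisted form lies in $\AA_{\kappa}(q^2,\chi\psi^2)$ and hence is accounted for by the basis \eqref{basisGamma(q)}, and bounds the multiplicity by $\tau(q)$ via the corollary on twists recovered from the same local analysis you invoke. Your explicit double counting of pairs $(f,\psi)$ with the fibre bound $2^{\omega(q_{\chi})} \leq \tau(q)$ is just a repackaging of that argument, so there is nothing substantive to change.
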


\begin{proof}
From \eqref{basisGamma1(q)},
\[\#\left\{f \in \BB_{\kappa}\left(\Gamma_1(q)\right) : t_f \in E_0, \ |\lambda_f(p)| \in E_p \text{ for all $p \in \PP$}\right\}\]
is equal to
\[\sum_{\substack{\chi \hspace{-.25cm} \pmod{q} \\ \chi(-1) = (-1)^{\kappa}}} \sum_{\substack{q_1 q_2 = q \\ q_1 \equiv 0 \hspace{-.25cm} \pmod{q_{\chi}}}} \tau(q_2) \#\left\{f \in \BB_{\kappa}^{\ast}(q_1,\chi) : t_f \in E_0, \ |\lambda_f(p)| \in E_p \text{ for all $p \in \PP$}\right\},\]
which, in turn, is equal to
\begin{multline*}
\frac{1}{\varphi(q)} \sum_{\psi \hspace{-.25cm} \pmod{q}} \sum_{\substack{\chi \hspace{-.25cm} \pmod{q} \\ \chi(-1) = (-1)^{\kappa}}} \sum_{\substack{q_1 q_2 = q \\ q_1 \equiv 0 \hspace{-.25cm} \pmod{q_{\chi}}}} \tau(q_2)	\\
\times \#\left\{f \otimes \psi : f \in \BB_{\kappa}^{\ast}(q_1,\chi), \ t_f \in E_0, \ |\lambda_f(p)| \in E_p \text{ for all $p \in \PP$}\right\},
\end{multline*}
as twisting preserves Laplacian eigenvalues and the absolute value of Hecke eigenvalues. Each twist $g = f \otimes \psi$ of some $f \in \BB_{\kappa}^{\ast}(q_1,\chi)$ is a newform of weight $\kappa$, level dividing $q^2$, and nebentypus of conductor dividing $q$, and \hyperref[twistequalcorollary]{Corollary \ref*{twistequalcorollary}} implies that there are at most $\tau(q)$ newforms of level dividing $q$ that can be twisted by a Dirichlet character of conductor dividing $q$ to yield $g$. Since $\tau(q_2) \leq \tau(q)$, the above quantity is bounded by
\[\frac{\tau(q)^2}{\varphi(q)} \sum_{\substack{\chi \hspace{-.25cm} \pmod{q} \\ \chi(-1) = (-1)^{\kappa}}} \sum_{\substack{q_1 q_2 = q^2 \\ q_1 \equiv 0 \hspace{-.25cm} \pmod{q_{\chi}}}} \#\left\{g \in \BB_{\kappa}^{\ast}(q_1,\chi) : t_g \in E_0, \ |\lambda_g(p)| \in E_p \text{ for all $p \in \PP$}\right\},\]
while the explicit basis \eqref{basisGamma(q)} of $\BB_{\kappa}(\Gamma(q))$ implies that
\[\#\left\{g \in \BB_{\kappa}(\Gamma(q)) : t_g \in E_0, \ |\lambda_g(p)| \in E_p \text{ for all $p \in \PP$}\right\}\]
is equal to
\[\sum_{\substack{\chi \hspace{-.25cm} \pmod{q} \\ \chi(-1) = (-1)^{\kappa}}} \sum_{\substack{q_1 q_2 = q^2 \\ q_1 \equiv 0 \hspace{-.25cm} \pmod{q_{\chi}}}} \tau(q_2) \# \left\{g \in \BB_{\kappa}^{\ast}(q_1,\chi) : t_g \in E_0, \ |\lambda_g(p)| \in E_p \text{ for all $p \in \PP$}\right\}.\]
This yields the result.
\end{proof}

Combining this with the fact that $\vol\left(\Gamma(q) \backslash \Hb\right) = q \vol\left(\Gamma_1(q) \backslash \Hb\right)$, we deduce \hyperref[improvedthm]{Theorem \ref*{improvedthm}}. It is likely that a more careful analysis could obtain this same result even when $q$ is not squarefree via the methods in \cite{Hum}.

\section*{}
\vspace{-.6cm}

\subsection*{Acknowledgements}

The author thanks Peter Sarnak for many helpful discussions on this topic, as well as the referee for correcting several mistakes in an earlier version of this paper.


\begin{thebibliography}{BrMia09}

\bibitem[AL78]{AL} A.\ O.\ L.\ Atkin and Wen-Ch'ing Winnie Li, ``Twists of Newforms and Pseudo-Eigenvalues of $W$-Operators'', \textit{Inventiones Mathematicae} \textbf{48}:3 (1978), 221--243. \textsc{doi}:\allowbreak\href{https://doi.org/10.1007/BF01390245}{10.1007/BF01390245}

\bibitem[BBR14]{BBR} Valentin Blomer, Jack Buttcane, and Nicole Raulf, ``A Sato--Tate Law for $\GL(3)$'', \textit{Commentarii Mathematici Helvetici} \textbf{89}:4 (2014), 895--919. \textsc{doi}:\allowbreak\href{https://doi.org/10.4171/CMH/337}{10.4171/CMH/337}

\bibitem[BHM07]{BHM} Valentin Blomer, Gergely Harcos, and Philippe Michel, ``Bounds for Modular $L$-Functions in the Level Aspect'', \textit{Annales Scientifiques de l'\'{E}cole Normale Sup\'{e}rieure, $4^{\mathrm{e}}$ s\'{e}rie} \textbf{40}:5 (2007), 697--740. \textsc{doi}:\allowbreak\href{https://doi.org/10.1016/j.ansens.2007.05.003}{10.1016/j.ansens.2007.05.003}

\bibitem[BlMil15]{BlMil} Valentin Blomer and Djordje Mili\'{c}evi\'{c}, ``The Second Moment of Twisted Modular $L$-Functions'', \textit{Geometric and Functional Analysis} \textbf{25}:2 (2015), 453--516. \textsc{doi}:\allowbreak\href{https://doi.org/10.1007/s00039-015-0318-7}{10.1007/s00039-015-0318-7}

\bibitem[BS07]{BS} Andrew R.\ Booker and Andreas Str\"{o}mbergsson, ``Numerical Computations with the Trace Formula and the Selberg Eigenvalue Conjecture'', \textit{Journal f\"{u}r die reine und angewandte Mathematik} \textbf{607} (2007), 113--161. \textsc{doi}:\allowbreak\href{https://doi.org/10.1515/CRELLE.2007.047}{10.1515/CRELLE.2007.047}

\bibitem[BrMia09]{BrMia} Roelof W. Bruggeman and Roberto J. Miatello, \textit{Sum Formula for $\SL_2$ over a Totally Real Number Field}, Memoirs of the American Mathematical Society \textbf{197}:919, American Mathematical Society, Providence, 2009. \textsc{doi}:\allowbreak\href{https://doi.org/10.1090/memo/0919}{10.1090/memo/0919}

\bibitem[CDF97]{CDF} J.\ B.\ Conrey, W.\ Duke, and D.\ W.\ Farmer, ``The Distribution of the Eigenvalues of Hecke Operators'', \textit{Acta Arithmetica} \textbf{78}:4 (1997), 405--409. \textsc{doi}:\allowbreak\href{https://doi.org/10.4064/aa-78-4-405-409}{10.4064/aa-78-4-405-409}

\bibitem[DFI02]{DFI} W.\ Duke, J.\ B.\ Friedlander, and H.\ Iwaniec, ``The Subconvexity Problem for Artin $L$-Functions'', \textit{Inventiones Mathematicae} \textbf{149}:3 (2002), 489--577. \textsc{doi}:\allowbreak\href{https://doi.org/10.1007/s002220200223}{10.1007/s002220200223}

\bibitem[GR07]{GR} I.\ S.\ Gradshteyn and I.\ M.\ Ryzhik, \textit{Table of Integrals, Series, and Products, Seventh Edition}, editors Alan Jeffrey and Daniel Zwillinger, Academic Press, Burlington, 2007.

\bibitem[HM07]{HM} C.\ P.\ Hughes and Steven J.\ Miller, ``Low-Lying Zeros of $L$-Functions with Orthogonal Symmetry'', \textit{Duke Mathematical Journal} \textbf{136}:1 (2007), 115--172. \textsc{doi}:\allowbreak\href{https://doi.org/10.1215/S0012-7094-07-13614-7}{10.1215/S0012-7094-07-13614-7}

\bibitem[Hum17]{Hum} Peter Humphries, ``Spectral Multiplicity for Maa\ss{} Newforms of Non-Squarefree Level'', to appear in \textit{International Mathematics Research Notices} (2017), 41 pages. \textsc{doi}:\allowbreak\href{https://doi.org/10.1093/imrn/rnx283}{10.1093/imrn/rnx283}

\bibitem[Hux86]{Hux} M.\ N.\ Huxley, ``Exceptional Eigenvalues and Congruence Subgroups'', in \textit{The Selberg Trace Formula and Related Topics}, editors Dennis A.\ Hejhal, Peter Sarnak and Audrey Anne Terras, Contemporary Mathematics \textbf{53}, American Mathematical Society, Providence, 1986, 341--349. \textsc{doi}:\allowbreak\href{https://doi.org/10.1090/conm/053/853564}{10.1090/conm/053/853564}

\bibitem[Iwa02]{Iwa} Henryk Iwaniec, \textit{Spectral Methods of Automorphic Forms, Second Edition}, Graduate Studies in Mathematics \textbf{53}, American Mathematical Society, Providence, 2002. \textsc{doi}:\allowbreak\href{https://doi.org/10.1090/gsm/053}{10.1090/gsm/053}

\bibitem[IK04]{IK} Henryk Iwaniec and Emmanuel Kowalski, \textit{Analytic Number Theory}, American Mathematical Society Colloquium Publications \textbf{53}, American Mathematical Society, Providence, 2004. \textsc{doi}:\allowbreak\href{https://doi.org/10.1090/coll/053}{10.1090/coll/053}

\bibitem[ILS00]{ILS} Henryk Iwaniec, Wenzhi Luo, and Peter Sarnak, ``Low Lying Zeros of Families of $L$-Functions'', \textit{Publications Math\'{e}matiques de l'Institut des Hautes \'{E}tudes Scientifiques} \textbf{91}:1 (2000), 55--131. \textsc{doi}:\allowbreak\href{https://doi.org/10.1007/BF02698741}{10.1007/BF02698741}

\bibitem[Kim03]{Kim} Henry H.\ Kim, ``Functoriality for the Exterior Square of $\GL_4$ and the Symmetric Fourth of $\GL_2$'', Appendix 1 by Dinakar Ramakrishnan and Appendix 2 by Henry H.\ Kim and Peter Sarnak, \textit{Journal of the American Mathematical Society} \textbf{16}:1 (2003), 139--183. \textsc{doi}:\allowbreak\href{https://doi.org/10.1090/S0894-0347-02-00410-1}{10.1090/S0894-0347-02-00410-1}

\bibitem[KL13]{KL} A.\ Knightly and C.\ Li, \textit{Kuznetsov's Trace Formula and the Hecke Eigenvalues of Maass Forms}, Memoirs of the American Mathematical Society \textbf{224}:1055, American Mathematical Society, Providence, 2013. \textsc{doi}:\allowbreak\href{https://doi.org/10.1090/S0065-9266-2012-00673-3}{10.1090/S0065-9266-2012-00673-3}

\bibitem[Kuz81]{Kuz} N.\ V.\ Kuznetsov, ``The Petersson Conjecture for Cusp Forms of Weight Zero and the Linnik Conjecture. Sums of Kloosterman Sums'', \textit{Mathematics of the USSR. Sbornik} \textbf{39} (1981), 299--342. \textsc{doi}:\allowbreak\href{https://doi.org/10.1070/SM1981v039n03ABEH001518}{10.1070/SM1981v039n03ABEH001518}

\bibitem[Li09]{Li} Xiannan Li, ``Upper Bounds on $L$-Functions at the Edge of the Critical Strip'', \textit{International Mathematics Research Notices} \textbf{2010}:4 (2010), 727--755. \textsc{doi}:\allowbreak\href{https://doi.org/10.1093/imrn/rnp148}{10.1093/imrn/rnp148}

\bibitem[LM13]{LMFDB} The LMFDB Collaboration, \textit{The $L$-Functions and Modular Forms Database}, \url{http://www.lmfdb.org}, 2013, [Online; accessed 30 January 2017].

\bibitem[Mag13]{Mag} P\'{e}ter Maga, ``A Semi-Adelic Kuznetsov Formula over Number Fields'', \textit{International Journal of Number Theory} \textbf{9}:7 (2013), 1649--1681. \textsc{doi}:\allowbreak\href{https://doi.org/10.1142/S1793042113500498}{10.1142/S1793042113500498}

\bibitem[PY18]{PY} Ian Petrow and Matthew P.\ Young, ``A Generalized Cubic Moment and the Petersson Formula for Newforms'', to appear in \textit{Mathematische Annalen} (2018), 67 pages. \textsc{doi}:\allowbreak\href{https://doi.org/10.1007/s00208-018-1745-1}{10.1007/s00208-018-1745-1}

\bibitem[Sar87]{Sarnak} Peter Sarnak, ``Statistical Properties of Eigenvalues of the Hecke Operators'', in \textit{Analytic Number Theory and Diophantine Problems. Proceedings of a Conference at Oklahoma State University, 1984}, editors A.\ C.\ Adolphson, J.\ B.\ Conrey, A.\ Ghosh, and R.\ I.\ Yager, Progress in Mathematics \textbf{70}, Birkh\"{a}user, Boston, 1987, 321--331. \textsc{doi}:\allowbreak\href{https://doi.org/10.1007/978-1-4612-4816-3_19}{10.1007/978-1-4612-4816-3\_19}

\bibitem[S-PY18]{S-PY} Rainer Schulze-Pillot and Abdullah Yenirce, ``Petersson Products of Bases of Spaces of Cusp Forms and Estimates for Fourier Coefficients'', \textit{International Journal of Number Theory} \textbf{14}:8 (2018), 2277--2290. \textsc{doi}:\allowbreak\href{https://doi.org/10.1142/S1793042118501385}{10.1142/S1793042118501385}

\end{thebibliography}
\end{document}